\documentclass[11pt]{article}

\usepackage[UKenglish]{babel}
\usepackage[T1]{fontenc}
\usepackage[utf8]{inputenc}
\usepackage{amsmath,amssymb,amsfonts,amsthm, mathtools, mathrsfs}
\usepackage{parskip, color}
\usepackage{fancyhdr}
\usepackage{dsfont}
\usepackage{graphicx, pgf}
\usepackage{caption}
\usepackage{comment}
\usepackage{float}
\usepackage{subcaption}
\usepackage{enumerate}
\usepackage{hyperref}
\usepackage{cleveref}
\usepackage{comment}
\usepackage{todonotes}
\usepackage[left=2.5cm, right=2.5cm]{geometry}

\renewcommand{\d}{\text{\,d}}
\def\to{\rightarrow}

\newtheorem{proposition}{Proposition}[section]
\newtheorem{lemma}[proposition]{Lemma}

\newtheorem*{remark*}{Remark}
\newtheorem{theorem}[proposition]{Theorem}
\newtheorem{corollary}[proposition]{Corollary}

\author{
	Barbara Niethammer\thanks{Universität Bonn, Germany, \texttt{niethammer@iam.uni-bonn.de}}
	\hspace*{1 cm} 
	Lorena Pohl\thanks{Universität Bonn, Germany, \texttt{pohl@iam.uni-bonn.de}} 
	\hspace*{1 cm}
	Juan J. L. Velázquez\thanks{Universität Bonn, Germany, \texttt{velazquez@iam.uni-bonn.de}}
}
\title{Long-time behaviour of a nonlocal model for electroporation}
\date{\today}

\begin{document}
	\maketitle
	
	{\small \textbf{Abstract}
		In this paper we analyze a model for electroporation, a biological process in which a cell membrane exposed to an external voltage becomes permeable due to the formation and growth of nanoscale membrane pores. We prove a local stability result for asymptotic  self-similar solutions with a power-law tail. Our method relies on the  analysis of an equation for the first moment as well as comparison of solutions of the full problem to solutions of a corresponding transport problem. In particular this shows that the transport term  drives the long-time behaviour.

		\textbf{Acknowledgments}
		The authors gratefully acknowledge the financial support of the Deutsche Forschungsgemeinschaft (DFG, German Research Foundation) through the collaborative research
		centre "Analysis of criticality: from complex phenomena to models and estimates" (CRC 1720, Project-ID
		539309657) and the Bonn International Graduate School of Mathematics at the Hausdorff Center for Mathematics (EXC 2047/2, Project-ID 390685813).
	}
	
	\section{Introduction}
	We consider the problem
	\begin{equation}\label{eqn-main-parabolic}
		\left\{ 
		\begin{split}
			\partial_t f &= \partial_x \Big( \partial_x f + \Big(1 - \frac{\beta}{1 +  n_f^2 } x\Big)f   \Big), \quad x > 0 \\
			n_f &= n_f(t) = \int_0^{\infty} x f(x,t)\d x,\\
			f(0, t) &= \mu,\\
			f(x, 0) &= f_0,
		\end{split}
		\right.
	\end{equation}
	where $\beta$ and $\mu$ are positive parameters.
	 This equation is motivated by models for electroporation, a widely applied biological technique to make cellular membranes permeable to DNA or other molecules. This is achieved by exposing the cells to electric pulses, which increases the transmembrane potential, thereby influencing the formation and growth of nanoscale hydrophilic membrane pores \cite{ chen_membrane_2006, cunill-semanat_spontaneous_2019}. In the Subsection  \ref{Ss.derivation} below we present a brief derivation of the model before we describe our results on the long-time behaviour of solutions to \eqref{eqn-main-parabolic} in Section \ref{Ss.results}.
	
	\subsection{Derivation of the model}\label{Ss.derivation}
	Most existing continuum models for electroporation assume that pores and pre-pore membrane defects can be described by a single variable, namely their radius $r$. We describe by $n \d r$ the number of pores or membrane defects per unit of area with radius between $r$ and $r + \d r$ and assume that the evolution of the pores is independent, except for their collective effect in modifying the transmembrane potential as seen below. The growth of an individual pore can be modelled by the Langevin equation
	\begin{equation}\label{rgrowth}
		\d r =-   \frac{\d W}{\d r} \d r + \sqrt{2D} \d B(t),
	\end{equation}
	where $B$ denotes Brownian motion and $D$ a diffusion parameter. The energy $W$ is given by 
	$$W(r, V_m) = W_{pore}(r) - \tilde{C}_m r^2 V_m^2(t).$$ 
	Here $W_{pore}$ is the pore energy, 
	$V_m(t)$  the transmembrane potential induced by an external electrical field and $\tilde C_m$ a coefficient that will be explained below.
	Then the density $n(r,t)$ evolves according to  the Smoluchowski equation
	\begin{equation}\label{eqn-electroporation-n}
		\partial_t n = D \partial_r \Big( \partial_r n + \frac{1}{k_B T} \partial_r W n \Big) + S(r).
	\end{equation}
	Spontaneous pore formation and decay are captured by the source term
	$$S(r) = s(r)\Big(a_0e^{-\frac{W_{pore}}{k_B T}} - n\Big),$$
	where $s$ is a function supported on $[0, r^*]$ which means  that only small defects are spontaneously formed and closed. The constant $a_0 > 0$ describes the rate of defect formation and determines the number of pores at equilibrium when $V_m = 0$, in which case the system has detailed balance. This class of models was introduced in \cite{weaver_theory_1996} and forms the basis for many electroporation models considered in the literature. 
	
	We give here only a qualitative description of $W_{pore}$, which is well-agreed upon in the literature \cite{glaser_reversible_1988, debruin_modeling_1999, akimov_pore_2017, akimov_pore_2017-1} and sketched in Figure 1: we distinguish between hydrophobic pre-pore membrane defects and hydrophilic pores, the latter of which become energetically favorable at a critical radius $r^*$. We assume $W_{pore}(0) = \partial_r W_{pore}(0) = 0$, and that for small $r$ the energy $W_{pore}$ is increasing up to an energy barrier of height $E^*$ at $r^*$. After the energy barrier there is 
	a local minimum $E_0$ at $r_0 > r^*$ due to the repulsion of lipid heads. The energy of a hydrophilic pore then scales linearly like $2\pi \sigma_l r$, where $\sigma_l$ represents the line tension. For very large pores it holds
	$W_{pore}(r) \sim 2\pi \sigma_l r - \pi \sigma_s r^2,$
	where $\sigma_s$ is the surface tension. Hence there exists a second critical radius depending on the relation between $\sigma_l$ and $\sigma_s$ and corresponding to a local maximum in $W_{pore}$, after which the pore energy is quadratically decreasing. A pore that passes this critical radius will continue to grow, ultimately causing membrane breakdown. However, this second energy barrier must be very high, since otherwise some pores could cross it due to fluctuations  in times of order one, leading to membrane breakdown even when no voltage is applied. Since we want to model a non-breakdown case, we disregard this quadratic term in the following, that is we assume that $\sigma_s=0$.
	\bigskip
	
	\begin{figure}[h]
		\centering
		\def\svgwidth{0.9\textwidth}
		{\footnotesize 
\begingroup%
  \makeatletter%
  \providecommand\color[2][]{%
    \errmessage{(Inkscape) Color is used for the text in Inkscape, but the package 'color.sty' is not loaded}%
    \renewcommand\color[2][]{}%
  }%
  \providecommand\transparent[1]{%
    \errmessage{(Inkscape) Transparency is used (non-zero) for the text in Inkscape, but the package 'transparent.sty' is not loaded}%
    \renewcommand\transparent[1]{}%
  }%
  \providecommand\rotatebox[2]{#2}%
  \newcommand*\fsize{\dimexpr\f@size pt\relax}%
  \newcommand*\lineheight[1]{\fontsize{\fsize}{#1\fsize}\selectfont}%
  \ifx\svgwidth\undefined%
    \setlength{\unitlength}{467.40342508bp}%
    \ifx\svgscale\undefined%
      \relax%
    \else%
      \setlength{\unitlength}{\unitlength * \real{\svgscale}}%
    \fi%
  \else%
    \setlength{\unitlength}{\svgwidth}%
  \fi%
  \global\let\svgwidth\undefined%
  \global\let\svgscale\undefined%
  \makeatother%
  \begin{picture}(1,0.25966942)%
    \lineheight{1}%
    \setlength\tabcolsep{0pt}%
    \put(0,0){\includegraphics[width=\unitlength,page=1]{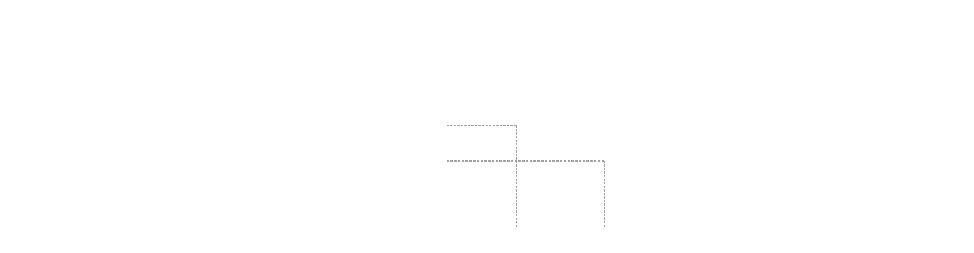}}%
    \put(0.38442233,0.21030866){\makebox(0,0)[lt]{\lineheight{1.25}\smash{\begin{tabular}[t]{l}$W_{pore}$\end{tabular}}}}%
    \put(0.98571899,0.00149763){\makebox(0,0)[lt]{\lineheight{1.25}\smash{\begin{tabular}[t]{l}$r$\\\end{tabular}}}}%
    \put(0.52123637,0.00149763){\makebox(0,0)[lt]{\lineheight{1.25}\smash{\begin{tabular}[t]{l}$r^*$\\\end{tabular}}}}%
    \put(0.60627731,0.00149763){\makebox(0,0)[lt]{\lineheight{1.25}\smash{\begin{tabular}[t]{l}$r_0$\\\end{tabular}}}}%
    \put(0.38442233,0.13237158){\makebox(0,0)[lt]{\lineheight{1.25}\smash{\begin{tabular}[t]{l}$E^*$\\\end{tabular}}}}%
    \put(0.38442233,0.08951362){\makebox(0,0)[lt]{\lineheight{1.25}\smash{\begin{tabular}[t]{l}$E_0$\\\end{tabular}}}}%
    \put(0,0){\includegraphics[width=\unitlength,page=2]{pores_and_energy.pdf}}%
  \end{picture}%
\endgroup%
 }
		\caption{Left: An intact membrane forms a hydrophobic defect, which transforms into a hydrophilic pore. Right: Schematic of $W_{pore}$ (dark blue, solid). The quadratic decay due to the surface tension (light blue, dashed) is disregarded.}
	\end{figure}
	\bigskip
	The second part of $W$ consists of the capacitive term $\tilde{C}_m  r^2 V_m^2$. It depends on the coefficient $\tilde{C}_m$, which represents the change in capacitance when water displaces lipids during pore formation, and the transmembrane potential $V_m$, which in turn depends on the applied external voltage $V_{ext}(t)$ and the pore distribution $n$. The membrane is modelled as a combination of a conductor and a capacitor.
	We consider the case of a planar bilayer lipid membrane placed between two equidistant electrodes, for which it holds
	\begin{equation}\label{eqn-electroporation-Vm}
		C_m \partial_t V_m + S_m V_m + 2\sigma_c V_m \int_{0}^{\infty} r n \d r = \frac{\sigma_c}{L} \big(- V_m + V_{ext}\big).
	\end{equation}
	Here $C_m$ is the membrane capacitance, $S_m$ the surface conductivity, $\sigma_c$ the conductivity of the liquid outside the membrane and $L$ the distance between the electrodes. The term $2\sigma_c V_m \int_{0}^{\infty} r n \d r$ is the current through conducting pores in the regime where $r$ is greater than the membrane thickness \cite{Poignard2017, chen_membrane_2006, krassowska_modeling_2007, kotnik_cell_2012}. We observe that there is a balancing effect between $V_m$ and the growth of the pores: when $V_m$ is large the pores grow more rapidly due to \eqref{rgrowth}, but an increase in the average pore size decreases $V_m$. 
	
	The study of (\ref{eqn-electroporation-n}) and (\ref{eqn-electroporation-Vm}) is  difficult due to  their dependence on constants which often cannot be measured experimentally, and are only known up to orders of magnitude even in the best case. Furthermore, the shape of the pore energy is based primarily on theoretical arguments, so that (\ref{eqn-electroporation-n}) offers in general only a qualitative description of the electroporation process \cite{Gianulis2017}. 
	
	We consider an approximate model describing the case when $\sigma_l$ is relatively small compared to the metastable pore radius $r_0$ and the timescale $T_{ext}$ of the external electrical field is much slower than the characteristic time $T^* = \frac{1}{D}e^{\frac{E^*}{k_B T}}$ required to pass the energy barrier in $W_{pore}.$ Without loss of generality we may take $k_B T = 1$. We split the problem into an interior region $r \in [0, r_0]$ and an outside region $r \geq r_0$ and first consider the case $V_m = 0$. For $r > r_0$ it holds $\partial_r W_{pore}(r) \sim 2 \pi \sigma_l$, so the characteristic pore size is $r_+ = \frac{1}{2\pi \sigma_l} > r_0$.
	Rescaling $s = \frac{D}{r_+^2}t$ and $x = \frac{r}{r_+}$ in the external region yields to leading order
	$$\partial_s n =  \partial_x \big( \partial_x n +  n \big)$$
	for $x \geq \frac{r_0}{r_+}$.
	In the interior we use the alternative scaling $y = \frac{r}{r_0}$ and obtain for $y \in [0,1]$
	\begin{equation*}
		\partial_s n_{in} = \frac{r_+^2}{r_0^2} \partial_y \left( \partial_y n_{in} +  \partial_y W_{pore}(y) n_{in} \right) + s(y)\Big(a_0e^{-\frac{W_{pore}(y)}{k_B T}} - n_{in}\Big)
		.
	\end{equation*}
	This converges to its equilibrium $n_{in}(y, t) \to a_0 e^{-W_{pore}(y)}$ much faster than the external problem since
	$\frac{r_+^2}{r_0^2} \gg 1$. We therefore assume $n_{in}(y,t) = a_0 e^{-W_{pore}(y)}$ and consider only the outside problem, for which we take at $x = \frac{r_0}{r_+}$ the Dirichlet boundary value $n_{in}(1) = a_0e^{-E_0}.$
	
	Now we consider the effect of $V_m \neq 0$. We assume that $V_m$ adjusts immediately to changes in $V_{ext},$ i.e. we obtain from \eqref{eqn-electroporation-Vm}
	that 
	$$V_m(s) = \frac{V_{ext}(s)}{\frac{L}{\sigma_c}S_m + 1 + 2L
		\Big(r_+^2 \int_{\frac{r_0}{r_+}}^{\infty} x n(x,s) \d x
		+ r_0^2 \int_{0}^{1} y a_0 e^{-W_{pore}(y)} \d y
		\Big)
		}.$$
	Thus, denoting $a_1 = 2Lr_0^2 \int_{0}^{1} y a_0 e^{-W_{pore}(y)} \d y$ and rescaling $f(x,s) = \frac{2L r_+^2}{\frac{L S_m}{\sigma_c} + 1 + a_1} n(x - \frac{r_0}{r_+}, s)$ it holds
	\begin{align*}
		V_m(s)^2 \approx \frac{V_{ext}(s)^2}{\big(\frac{L S_m}{\sigma_c} + 1 + a_1\big)^2 + \big(\frac{L S_m}{\sigma_c} + 1 + a_1 \big)^2
			\left(\int_{0}^{\infty} x f(x,s) \d x\right)^2
		}
	\end{align*}
	and thus
	\begin{align*}
		\partial_x W(x, V_m(s)) &\approx 1 -  2 \tilde{C}_m r_+^2 x V_m^2(s)
		\approx 1 - \frac{
			\frac{2r_+^2 \tilde{C}_m}{\left(\frac{L S_m}{\sigma_c} + 1 + a_1\right)^2} V_{ext}(s)^2 x
			}
			{1 + \left(\int_{0}^{\infty} x f(x,s) \d x\right)^2
			}.
	\end{align*}
	
	Taking $\beta = \frac{2r_+^2 \tilde{C}_m}{\left(\frac{L S_m}{\sigma_c} + 1 + a_1\right)^2}$ and $\mu = \frac{2L r_+^2}{\frac{L S_m}{\sigma_c} + 1 + a_1}a_0 e^{-E_0}$ and denoting the time variable again as $t$ yields
	\begin{equation}\label{eqn-main-general-vm}
	\left\{ 
	\begin{split}
		\partial_t f &= \partial_x \Big( \partial_x f + \Big(1 - \frac{\beta V_{ext}(t)^2}{1 + n_f^2 } x\Big)f   \Big), \quad x > 0 \\
		f(0, t) &= \mu.
	\end{split}
	\right.
\end{equation}
	We remark that in this model, the contribution of $V_m$ to pore growth lies in the reduction of the energy for large pores, not in the reduction of the energy barrier $E^*$, which we assume is small enough to have metastable hydrophilic pores even when $V_m = 0$. This can be compared to \cite{neu_asymptotic_1999}, who consider the situation where the energy barrier is very high and hydrophilic pores can only be formed when it is reduced by means of $V_m$. Furthermore, in our model $V_m$ depends explicitly on the pore distribution, in contrast to \cite{kavian_classical_2014}, who consider a variant of (\ref{eqn-electroporation-Vm}) with locally varying surface conductivity instead of pores. 
	
	\subsection{Main results}\label{Ss.results}
	
	As in \cite{krassowska_modeling_2007}, we  consider here the case of  constant potential. We assume from now on that  $V_{ext} = 1$, such that \eqref{eqn-main-general-vm} is equivalent to (\ref{eqn-main-parabolic}). Due to the effect of this constant potential we expect that the long-time behaviour of solutions to \eqref{eqn-main-parabolic} is determined by the corresponding transport problem 
	\begin{equation}\label{eqn-main-hyperbolic}
		\left\{ 
		\begin{split}
			\partial_t f &= \partial_x \Big(  \Big(1 - \frac{\beta}{1 +   n_f^2 } x\Big)f   \Big), \quad x > 0,\\
			f(x, 0) &= f_0.
		\end{split}
		\right. .
	\end{equation}
	
	We are now going to compute possible steady states in self-similar variables. Rescaling $f(x,t) = \frac{1}{t^{\frac{3}{2}}} F(y, \tau), y = \frac{x}{t}, \tau = \ln(t)$ yields for (\ref{eqn-main-parabolic})
	\begin{equation}\label{eqn-parabolic-selfsim-version}
		\partial_\tau F =  e^{-\tau}\partial_y^2 F +  \Big(1 + \Big(1 - \frac{\beta e^{\tau} }{1 +  e^\tau N_F^2}\Big) y \Big) \partial_y F + \Big(\frac{3}{2}  - \frac{\beta e^{\tau} }{1 +  e^\tau N_F^2} \Big) F, \quad \quad
		F(y, 0) = \mu e^{\frac{3}{2}\tau},
	\end{equation}
	and for the hyperbolic variant (\ref{eqn-main-hyperbolic})
	\begin{equation}\label{eqn-hyperbolic-selfsim-version}
		 \partial_\tau F =  \Big(1 + \Big(1 - \frac{\beta e^{\tau} }{1 +  e^\tau N_F^2}\Big) y \Big) \partial_y F + \Big(\frac{3}{2}  - \frac{\beta e^{\tau} }{1 +  e^\tau N_F^2} \Big) F,
	\end{equation}
	where in both cases
	$$N_F = \int_{0}^{\infty} yF(y, \tau) \d y = t^{-\frac{1}{2}} \int_{0}^{\infty} x f \d x =  n_f t^{-\frac{1}{2}}.$$
	For large times, assuming that $ N_F$ does not vanish it holds $\frac{\beta e^\tau y}{1 + e^\tau N_F^2} \approx \frac{\beta y}{ N_F^2},$
	so that (\ref{eqn-hyperbolic-selfsim-version}) admits asymptotic steady states. We denote these as $F_s(y)$ and they solve
	\begin{equation}\label{eqn-Fs}
		0 = \left(1 +  \gamma y\right) \partial_y F_s + \Big(\gamma + \tfrac{1}{2}\Big)F_s,
	\end{equation}
	where
	\begin{equation}\label{eqn-def-gamma}
		\gamma \coloneqq 1 - \frac{\beta}{ N_s^2}, \hspace*{1.2cm} N_s \coloneqq \int_{0}^{\infty} y F_s(y) \d y .
	\end{equation}
	
	Solutions to \eqref{eqn-Fs}-\eqref{eqn-def-gamma} can be easily computed explicitly. We summarize the results in the following Proposition \ref{P.fs}
	
	\bigskip
	\begin{proposition}\label{P.fs}
		The solutions of (\ref{eqn-Fs}) with finite mass $N_s$ are a one-parameter family of functions parametrized by $\gamma \in \left( -\infty, \frac{1}{2} \right)$ and are given by
		\begin{enumerate}[(i)]
			\item $\gamma \in \left(0, \frac{1}{2}\right):$ 
			$$F_s(y) = c_s(1 + \gamma y)^{-(1 + \frac{1}{2\gamma})}, \qquad c_s = \frac{1 - 2\gamma}{4}N_s, \qquad N_s = \Big(\frac{\beta}{ (1 {-}\gamma)} \Big)^{\frac{1}{2}}$$
			\item $\gamma = 0:$
			$$F_s(y) = c_s e^{-\frac{y}{2}}, \qquad c_s = \frac{\beta^{\frac{1}{2}}}{4}, \qquad N_s = 4c_s$$
			\item $\gamma < 0, \gamma \neq -\frac{1}{2}:$ 
			$$F_s(y) = c_s(1 + \gamma y)^{-(1 + \frac{1}{2\gamma})}\chi_{\left\{y < -\frac{1}{\gamma}\right\}}, \qquad c_s = \frac{1 - 2\gamma}{4}N_s, \qquad N_s = \Big(\frac{\beta}{ (1 {-} \gamma)} \Big)^{\frac{1}{2}}$$
			where $\chi$ is the indicator function, i.e. $F_s$ has bounded support. 
			\item $\gamma = -\frac{1}{2}:$
			$$F_s = c_1 \chi_{\{y < 2\} }+ c_2 \delta_{ \{ y = 2\} }$$
		\end{enumerate}
	\end{proposition}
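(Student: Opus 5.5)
Treating \eqref{eqn-Fs} as a linear first-order ODE in $y$ with $\gamma$ a fixed parameter, we obtain the self-consistent steady states by solving it explicitly and then imposing \eqref{eqn-def-gamma}. For $\gamma\neq0$, separation of variables on $\{1+\gamma y>0\}$ gives
$$\frac{F_s'}{F_s}=-\frac{\gamma+\frac12}{1+\gamma y},\qquad F_s(y)=c_s(1+\gamma y)^{-\frac{\gamma+1/2}{\gamma}}=c_s(1+\gamma y)^{-(1+\frac{1}{2\gamma})}.$$
For $\gamma=0$ the equation reads $F_s'+\frac12F_s=0$, so $F_s=c_se^{-y/2}$.

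Next we impose finiteness of $N_s$ case by case.
\begin{enumerate}[(i)]
\item For $\gamma>0$ the solution lives on all of $(0,\infty)$ and $yF_s\sim y^{-1/(2\gamma)}$. Integrability therefore requires $\frac{1}{2\gamma}>2$, i.e.\ $\gamma<\frac14$. Here I would double-check against the statement's range $(0,\frac12)$. Perhaps "finite mass" should be read as $\int F_s<\infty$, which requires $\frac1{2\gamma}>0$ only. More likely it is finite $\int yF_s$, where the substitution $u=1+\gamma y$ gives a Beta-type integral. I would compute $\int_0^\infty yF_s\,dy$ via $u=1+\gamma y$ and see which range yields a finite value.
\item For $\gamma<0$ the point $y=-1/\gamma$ is a characteristic singularity. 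In the transport problem the flux $(1+\gamma y)F_s$ must vanish beyond it, so we take $F_s=0$ for $y>-1/\gamma$; the problem thus becomes finding the right notion of solution there. I would interpret \eqref{eqn-Fs} in the distributional sense, namely $\partial_y((1+\gamma y)F_s)+(\frac12-\gamma+\gamma)\ldots$. More precisely, I would rewrite \eqref{eqn-Fs} as $\partial_y\big((1+\gamma y)F_s\big)+\frac12F_s=0$ and test against smooth functions.
\item Near $y_*=-1/\gamma$ the density behaves like $(1+\gamma y)^{-(1+1/(2\gamma))}$. Its exponent $-(1+\frac1{2\gamma})$ exceeds $-1$ precisely when $\gamma<-\frac12$. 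It is negative with modulus less than $1$, hence integrable, for $-\frac12<\gamma<0$. In that range the flux $(1+\gamma y)F_s\sim(1+\gamma y)^{-1/(2\gamma)}$ tends to $0$, so truncation gives a distributional solution. For $\gamma<-\frac12$ the exponent is positive and the density is also fine.
\item The critical case is $\gamma=-\frac12$, where the exponent vanishes and $F_s$ is constant on $(0,2)$. The boundary flux $(1-\frac y2)c_1$ then vanishes at $y=2$. A Dirac mass $c_2\delta_2$ also solves the distributional equation, because $(1+\gamma y)\delta_2=0$ and $\frac12\delta_2$ must then be balanced. I need to verify this computation carefully, since it is exactly where the family degenerates.
\end{enumerate}

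Finally we close the self-consistency relation. We compute $N_s=\int yF_s$ explicitly by integrating by parts directly in the ODE: multiplying \eqref{eqn-Fs} by $y$ and integrating, with vanishing boundary terms, gives
$$-\int(1+2\gamma y)F_s+(\gamma+\tfrac12)\int yF_s=0.$$
Similarly, integrating \eqref{eqn-Fs} itself gives $-F_s(0)+(\frac12)\int F_s=0$. These two relations yield $N_s$ in terms of $c_s=F_s(0)$, namely $c_s=\frac{1-2\gamma}{4}N_s$. Combining this with $\gamma=1-\beta/N_s^2$ gives $N_s=(\beta/(1-\gamma))^{1/2}$; this requires $\gamma<1$, which holds automatically. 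For $\gamma=0$ it gives $N_s=\sqrt\beta=4c_s$. The main obstacle I expect is the rigorous justification of the boundary terms and the distributional meaning near $y=-1/\gamma$, especially the delta case $\gamma=-\frac12$ and the precise upper bound on $\gamma$ for finite moments.
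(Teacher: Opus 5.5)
Your overall plan is sound: solve the linear ODE explicitly, impose a finite first moment, obtain $c_s$ from the two moment identities $F_s(0)=\tfrac12\int F_s$ and $\int F_s=(\tfrac12-\gamma)N_s$, and close with $\gamma=1-\beta/N_s^2$. The moment identities are a tidy substitute for evaluating the Beta-type integral; the paper itself only says the solutions are ``easily computed explicitly''. But the proposal goes wrong exactly at the steps that fix the content of the statement. For $\gamma>0$ one has $yF_s\sim c_s\gamma^{-\theta}y^{-1/(2\gamma)}$. This is integrable at infinity iff $\tfrac{1}{2\gamma}>1$, i.e.\ $\gamma<\tfrac12$ (equivalently $\theta>2$), not iff $\tfrac1{2\gamma}>2$. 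As written you derive $\gamma<\tfrac14$ and leave the discrepancy open, so the range $(0,\tfrac12)$ is not established. The same fact, $y(1+\gamma y)F_s\sim y^{2-\theta}\to0$, is what makes your boundary terms vanish. For $\gamma<0$ your exponent analysis is reversed: $-(1+\tfrac1{2\gamma})=\tfrac{1}{2|\gamma|}-1$ is positive for $-\tfrac12<\gamma<0$ and lies in $(-1,0)$ for $\gamma<-\tfrac12$. Your conclusion still holds for every $\gamma<0$: $F_s$ is locally integrable, and the flux $(1+\gamma y)F_s\sim(1+\gamma y)^{-1/(2\gamma)}\to0$ at $y=-1/\gamma$. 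The truncation should be derived from finite mass rather than from a heuristic about the flux. On $\{y>-1/\gamma\}$ the ODE gives $c'|1+\gamma y|^{\frac{1}{2|\gamma|}-1}$, and $yF_s$ is then non-integrable unless $c'=0$.

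The Dirac mass in (iv) does not work, and your phrase ``$\tfrac12\delta_2$ must then be balanced'' is exactly where it breaks. With your own divergence form, $\partial_y\big((1-\tfrac y2)c_2\delta_2\big)+\tfrac12c_2\delta_2=\tfrac{c_2}{2}\delta_2$, since $(1-\tfrac y2)\delta_2=0$. In weak form, $-\int(1+\gamma y)F_s\phi'\d y+\tfrac12\int F_s\phi\d y=\tfrac{c_2}{2}\phi(2)$. Nothing can cancel this. Any distribution supported at $y_*=-1/\gamma$ is a combination of the $\delta^{(k)}_{y_*}$. Because $(1+\gamma y)\delta^{(k)}_{y_*}=-k\gamma\,\delta^{(k-1)}_{y_*}$, each one leaves the residual $(\tfrac12-k\gamma)\delta^{(k)}_{y_*}\neq0$ for $\gamma<0$. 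Consistently, under the self-similar flow a point mass at the fixed point $y=2$ of the characteristics grows like $e^{\tau/2}$. So what your method actually yields at $\gamma=-\tfrac12$ is the formula of (iii) with exponent zero: $F_s=\tfrac{N_s}{2}\chi_{\{y<2\}}$ with $N_s=(2\beta/3)^{1/2}$. A component $c_2\delta_{\{y=2\}}$ with $c_2\neq0$ would require a notion of stationary solution other than the distributional one, and you would have to specify it. The same $\delta^{(k)}$ computation is also what you need to claim that the list is complete for $\gamma<0$.
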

	
	\bigskip
	Our goal in this paper  is to study whether the solutions to  (\ref{eqn-main-parabolic}), resp. (\ref{eqn-parabolic-selfsim-version}) converge to any of these asymptotic self-similar solutions and which one is selected. The difficulty in the analysis  is that, unless $V_{ext} =0$, there is an external supply of energy, so that the system does not have a decreasing free energy that could be used as a Lyapunov functional. Furthermore, there are no conserved moments and due to the nonlocal term, comparison principles are not directly available. Thus 
	 a global stability results seem difficult to obatin. We will show here a local stability result, that is if the initial data are in a certain sense close to the self-similar power-law solution $F_s$, then $F \to F_s$ for $y > 0$ as $\tau \to \infty$. Our proof is based on comparison to solutions of  the transport equation (\ref{eqn-hyperbolic-selfsim-version}).
	 
	\bigskip
	We remark here that \eqref{eqn-main-hyperbolic} is somewhat reminiscent of the classical LSW model for Ostwald Ripening which reads $\partial_t f + \partial_x \big( (\theta(t) x^{1/3} -1 ) f\big) =0$ where the mean-field $\theta$ is such that the first moment of $f$ is conserved. This model also has a one-parameter family of self-similar solutions all with compact support and a certain power law  or exponential behaviour respectively there. The long-time behaviour of solutions is then determined by the behaviour of the initial data at the end of their support \cite{NP99,NV06a,NV06b}. In contrast to the results in this paper for \eqref{eqn-main-parabolic}, if one adds an additional second order term in the LSW equation, then the long-time behaviour changes completely. Indeed, one expects, even though nothing is rigorously proved, that the only possible limit in self-similar variables is the solution with exponential decay. 
	
	In a related LSW-type model, analyzed in \cite{Carr_asymptotic_1998}, the term $x^{1/3}$ in the LSW model is replaced by $x$ and is thus closer to \eqref{eqn-main-hyperbolic}. In this case the analysis of the equation is somewhat simpler since it can  basically be  solved explicitly. Indeed, even though \eqref{eqn-main-hyperbolic} does not have a conserved first moment, the methods of \cite{Carr_asymptotic_1998}  can be adapted and the long-time behaviour of \eqref{eqn-main-hyperbolic} can be analyzed using these methods, which are based on characteristics. Here, we choose however a different approach based on a linearization of the equation. The reason for this is, that our main goal is to analyze the parabolic equation \eqref{eqn-main-parabolic}, for which the method of characteristics does not apply. Instead, we show a comparision principle for the linearized equation, for which we need the analysis of the corresponding hyperbolic equation \eqref{eqn-main-hyperbolic} as a stepping stone. 
	
	\bigskip
		In the present paper we restrict to the case that $F_s$ decays as a power law, i.e. the case $\gamma \in (0,\frac 1 2)$ and denote
	\begin{equation}\label{def-theta}
		\theta \coloneqq  1 + \frac{1}{2\gamma} > 2.
	\end{equation} 
For our local stability result we consider an initial time $\tau_0$ which needs to be sufficiently large and assume that $F(y,\tau_0)=F_s(y)+G(y,\tau_0)$ where $G(\cdot, \tau_0)$ decays faster than
$F_s$ and has certain regularity properties (see \eqref{thm-main-hyperbolic-ass-G0} and 
\eqref{thm-main-parabolic-ass-G0} below). The assumption that $\tau_0$ is large enough means that we assume that the solution is initially spread across a sufficiently large domain. 

	\bigskip
	Our first main result is now the following local stability result for 
	 the transport equation \eqref{eqn-hyperbolic-selfsim-version}.
	
	\bigskip
	\begin{theorem}\label{thm-main-hyperbolic}
		Let $F$ be the solution of (\ref{eqn-hyperbolic-selfsim-version}) and $F_s$ the solution of (\ref{eqn-Fs}) for a given $\gamma \in \left(0, \frac{1}{2}\right)$. Let $G = F - F_s$ have mass $N_G = \int_{0}^{\infty} yG(y, \tau) \d y$ and let
		$\varepsilon \in \left(0, \frac{1-\gamma}{\gamma}\right)$. 
		Then there exist $\bar{c}_0, \bar{\tau}_0 > 0$ and $c(\gamma, \varepsilon) > 0$ such that if $G(y, \tau_0) = G_0(y) + \hat{G}_0(y)$,
		\begin{equation}\label{thm-main-hyperbolic-ass-G0}
		|G_0(y)| \leq c_0\left(1 + \gamma y\right)^{-\theta - \varepsilon},
		\quad
		|\hat{G}_0(y)| \leq e^{(2 - \varepsilon \gamma)\tau_0 - \frac{ye^{\tau_{0}}}{2}}
		\end{equation}
		for $\tau_0 \geq \bar{\tau}_0$ and $c_0 \leq \bar{c}_0$, then
		$$|N_G(\tau)| \leq c(\gamma, \varepsilon)\left(e^{-\varepsilon \gamma \tau_0 } + c_0 \right) e^{- \varepsilon \gamma(\tau - \tau_0)}$$
		for all $\tau \geq \tau_0$, i.e. $N_F \to N_s$ exponentially as $\tau \to \infty$. In addition, it holds
		$F(y, \tau) \to F_s(y)$ as $\tau \to \infty$ uniformly on compact sets in $[0, \infty)$. 
	\end{theorem}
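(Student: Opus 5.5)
We write $F = F_s + G$ in \eqref{eqn-hyperbolic-selfsim-version} and denote
\[
a(\tau) \coloneqq \frac{\beta e^{\tau}}{1+e^{\tau}N_F^2},
\qquad
\gamma(\tau) \coloneqq 1-a(\tau).
\]
Since $a(\tau) = \beta/N_F^2 + O(e^{-\tau})$ and $N_F = N_s+N_G$, linearising gives
\[
a(\tau) - (1-\gamma) = -\frac{2(1-\gamma)}{N_s}N_G + O(N_G^2) + O(e^{-\tau}).
\]
Using \eqref{eqn-Fs}, $G$ then solves
\[
\partial_\tau G = (1+\gamma(\tau) y)\partial_y G + \big(\tfrac32 - a(\tau)\big) G + \big(a(\tau)-(1-\gamma)\big)\big(y\partial_y F_s + F_s\big).
\]
The strategy is to derive a closed scalar equation for $N_G$ by testing with $y$, and to control the remainder by a comparison/characteristics argument for the transport part.

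\textbf{Step 1: moment equation.} Multiply by $y$ and integrate, using the decay of $F_s$ (valid since $\theta>2$). Integrating by parts,
\[
\int y(1+\gamma y)\partial_y G \,\d y = -\int (1+2\gamma y)G \,\d y,
\]
so
\[
\frac{\d}{\d\tau}N_G = -M_G + \big(\tfrac12 - a - \gamma\big)N_G + \big(a-(1-\gamma)\big)\int y(y\partial_yF_s+F_s)\,\d y.
\]
The last integral equals $-N_s$. Substituting $a-(1-\gamma) \approx -2(1-\gamma)N_G/N_s$, the linear part becomes
\[
\dot N_G = -M_G + \big(\tfrac12 - 1 + 2(1-\gamma)\big)N_G + \dots,
\]
where $M_G = \int G\,\d y$. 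This is not closed because of the zeroth moment $M_G$, so $G$ itself must be tracked.

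\textbf{Step 2: solving for $G$ by characteristics.} The transport part $\partial_\tau G = (1+\gamma(\tau)y)\partial_y G + (\frac32 - a)G$ has explicit characteristics: $y$ is transported backward, so $y(\tau_0) = e^{\int\gamma}(y+\dots)$. This gives a Duhamel representation of $G$ in terms of $G_0$, $\hat G_0$, and the source $\big(a-(1-\gamma)\big)(yF_s)'$. From it we compute $M_G$ and $N_G$ as a linear Volterra integral equation for $N_G(\tau)$ with kernel built from $F_s$, plus forcing terms from the initial data.

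The contribution of $G_0$ to the forcing is where the extra decay $\varepsilon$ enters. Along the characteristics, the factor $(1+\gamma y)^{-\theta-\varepsilon}$ compared with the weight $(1+\gamma y)^{-\theta}$ of $F_s$ yields the extra factor $e^{-\varepsilon\gamma(\tau-\tau_0)}$; the condition $\varepsilon < (1-\gamma)/\gamma$ is what keeps the first moment of this term finite. The term $\hat G_0$ is concentrated at scale $e^{-\tau_0}$ with size $e^{(2-\varepsilon\gamma)\tau_0}$, so its first moment is of order $e^{-\varepsilon\gamma\tau_0}$. The $O(e^{-\tau})$ errors are of the same or smaller size.

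\textbf{Step 3: stability of the Volterra equation (main obstacle).} The kernel of the Volterra equation is explicit, since $F_s$ is a power law, so one can take Laplace transforms in $\tau$. The goal is to show that all roots of the characteristic function $1-\hat K(\lambda)=0$ satisfy $\mathrm{Re}\,\lambda < -\varepsilon\gamma$. I expect this to reduce to an explicit Beta-function type expression in $\lambda$ and $\theta$. The resolvent then decays like $e^{-\varepsilon\gamma\tau}$, and the nonlinear terms $O(N_G^2)$ and $O(N_G\|G\|)$ are absorbed by a bootstrap/continuation argument: assume $|N_G| \le 2C(e^{-\varepsilon\gamma\tau_0}+c_0)e^{-\varepsilon\gamma(\tau-\tau_0)}$ on $[\tau_0,T]$, then improve the constant using smallness of $c_0$ and largeness of $\tau_0$.

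Locating these roots is the delicate step. If a direct computation is messy, I would instead first build explicit super- and subsolutions of the linearised transport equation of the form $F_s$ perturbed by $\pm\delta(\tau)(1+\gamma y)^{-\theta-\varepsilon}$, which give a comparison principle for the linearised problem. These yield $|M_G| \le C|N_G| + \text{(decaying)}$ together with a sign structure that makes the $N_G$ equation dissipative.

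\textbf{Step 4: convergence of $F$.} Once $N_G$ decays exponentially, $\gamma(\tau)\to\gamma$ and $a(\tau)\to 1-\gamma$ exponentially. In the Duhamel formula, on compact sets in $y$ the characteristics trace back to regions $y(\tau_0)\sim e^{\gamma(\tau-\tau_0)}$, where $G_0$ is small relative to $F_s$ by the extra factor $(1+\gamma y)^{-\varepsilon}$. The $\hat G_0$ term disappears because its support is exponentially small in $y$, and the source term is integrable in time. Hence $G(\cdot,\tau)\to 0$ uniformly on compact subsets of $[0,\infty)$.
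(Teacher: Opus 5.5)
Your Steps 2--3 follow the paper's route: a Duhamel formula along the $N_G$-dependent characteristics $\tilde T$, an explicit kernel for $N_G$, and a continuation argument. A Laplace-transform treatment of the linear part is equivalent to the paper's Lemma~\ref{lemma-ODE-in-selfsim}. However, two steps fail as written.

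\textbf{Convergence of $F$ (Step 4).} Integrability of $J$ in time only shows that the Duhamel source term stays bounded. To conclude that it tends to $0$ you would need $\tilde T(\tau,r)\partial_y(yF_s)\to 0$ for fixed $r$, and this is false. Already for $J\equiv 0$,
\[
\tilde T(\tau,r)\partial_y(yF_s)(y)=c_s(1+\gamma y)^{-\theta-1}\Big(e^{-\gamma(\tau-r)}-\frac{1+\gamma y-e^{-\gamma(\tau-r)}}{2\gamma}\Big)\longrightarrow-(\theta-1)F_s(y),
\]
because $\partial_y(yF_s)$ has the tail of $-(\theta-1)F_s$. More precisely, since $F(\tau)=\tilde T(\tau,\tau_0)F(\tau_0)$ exactly, the source term equals $\tilde T(\tau,\tau_0)F_s-F_s$. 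From $M'\sim\gamma M\sim e^{\gamma(\tau-\tau_0)}e^{\int_{\tau_0}^\infty J\,\d r}$ one gets $\tilde T(\tau,\tau_0)F_s\to e^{-(\theta-1)\int_{\tau_0}^\infty J\,\d r}F_s$. So the transport structure only gives $F\to\kappa F_s$ on compacts. The perturbed characteristics leave a cumulative rescaling, which is small but not zero unless $\int_{\tau_0}^\infty J\,\d r=0$. That identity does not follow from integrability; it has to be extracted from $N_F\to N_s$. This is exactly the content of Lemma~\ref{lemma-F-to-Fs}: from $M'N_F=e^{\frac12(\tau-\tau_0)}\nu(M)$ and the exponential convergence of $N_F$, the paper derives $M'/M\to\gamma$ and the precise limit of $e^{\frac12(\tau-\tau_0)}M^{1-\theta}$, which forces $\kappa=1$. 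In your framework you would need to add the following: $G\to(\kappa-1)F_s$ with a uniform bound $\lesssim(1+\gamma y)^{-\theta}$, hence $N_G\to(\kappa-1)N_s$, hence $\kappa=1$. Your treatment of the $G_0$ and $\hat G_0$ contributions is fine.

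\textbf{The equation for $N_G$ (Step 3).} First, the source in your $G$-equation has the wrong sign: subtracting \eqref{eqn-Fs} gives $\big((1-\gamma)-a\big)\partial_y(yF_s)=J\partial_y(yF_s)$. With your sign the kernel becomes $+\frac{1-\gamma}{\gamma}\big(1-(1-2\gamma)e^{-\gamma s}\big)$, and $1-\hat K(\lambda)=0$ reads $\lambda^2-(2-3\gamma)\lambda-(1-\gamma)=0$, which has a positive root. With the correct sign, $K(s)=-\frac{1-\gamma}{\gamma}\big(1-(1-2\gamma)e^{-\gamma s}\big)$ (Corollary~\ref{lemma-hyperbolic-M}(iii)). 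Then $\hat K$ is rational rather than a Beta function, and the roots are $-1$ and $-(1-\gamma)$. The hypothesis $\varepsilon<\frac{1-\gamma}{\gamma}$ is precisely the gap condition $\varepsilon\gamma<1-\gamma$; it is not needed for finiteness, since the first moment of $(1+\gamma y)^{-\theta-\varepsilon}$ is finite for every $\varepsilon>0$.

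Second, the relevant nonlinearity is not a local $O(N_G^2)$ or $O(N_G\|G\|)$ term. The exact kernel $\frac{2\beta}{N_s^3}\int_0^\infty y\tilde T(\tau,r)\partial_y(yF_s)\,\d y$ differs from $K(\tau-r)$ by an amount proportional, to leading order, to $\int_r^\tau J\,\d\omega$. This does not decay as $\tau-r\to\infty$, while $K$ itself tends to a nonzero constant. If you move this difference to the right-hand side as forcing, your bootstrap hypothesis yields only an $O(c_N^2)$ bound with no decay in $\tau$. So the continuation in the class $e^{-\varepsilon\gamma(\tau-\tau_0)}$ does not close. It can be rescued by exploiting that $(1-\hat K(\lambda))^{-1}=\frac{\lambda(\lambda+\gamma)}{(\lambda+1)(\lambda+1-\gamma)}$ vanishes at $\lambda=0$: forcing that converges exponentially to a constant still produces a decaying response. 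This must be done explicitly. The paper instead keeps the error inside the kernel as the small, non-decaying $\Delta_1$. It then rewrites the equation as the $2\times2$ system of Lemma~\ref{lemma-ODE-in-selfsim}, where $\Delta_1$ is a small perturbation of a matrix with eigenvalues $-1$ and $-(1-\gamma)$.
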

	
	\bigskip
	Here, $G_0$ controls the tail while $\hat{G}_0$ controls the region $x \approx 1$, where the initial data can be much larger. Theorem \ref{thm-main-hyperbolic} serves as preparation for the parabolic case, for which we obtain a similar result with the exception that $F(y,\tau) - F_s(y) \to 0$ as $\tau \to \infty$ holds only for $y > 0$. This is because $f$ has a boundary layer at $x \approx 1$ due to the Dirichlet boundary condition, so that $F$ has a boundary layer for $y \approx \frac{1}{t}$. This is reflected in the theorem by the presence of the term $\mu e^{\frac{3}{2}\tau - ye^{\tau}}$ in the limit of $F$, which corresponds to $\mu e^{-x}$ in the limit of $f$, and is the reason we allow inital data that are potentially very large in the region $x \approx 1$. 
	
	\bigskip
	
	\begin{theorem}\label{thm-main-parabolic}
		Let $F$ be the solution of (\ref{eqn-parabolic-selfsim-version}) and $F_s$ the solution of (\ref{eqn-Fs}) for a given $\gamma \in \left(0, \frac{1}{2}\right)$.
		Let $G = F - F_s$ have mass $N_G = \int_{0}^{\infty} yG(y, \tau) \d y$.
		Then given
		 $ \varepsilon \in \left(0, \frac{1}{2\gamma} \right]$,
		there exist $\bar{c}_0, \bar{\tau}_0 > 0$ and $c(\gamma, \varepsilon) > 0$ such that if $G(y, \tau_0) = G_0 + \hat{G}_0$, 
		\begin{equation}\label{thm-main-parabolic-ass-G0}
			|G_0(y)| \leq c_0\left(1 + \gamma y\right)^{-\theta - \varepsilon}, \quad
			|\partial_y^2 G_0(y)| \leq c_0\left(1 + \gamma y\right)^{-\theta - 2}, \quad
			|\hat{G}_0(y)| \leq e^{(2 - \varepsilon \gamma)\tau_0 - \frac{ye^{\tau_{0}}}{2}}
		\end{equation}
		for $\tau_0 \geq \bar{\tau}_0$ and $c_0 \leq \bar{c}_0$, then
		$$|N_G(\tau)| \leq c(\gamma, \varepsilon)\left(e^{-\varepsilon \gamma \tau_0 } + c_0 \right) e^{- \varepsilon \gamma(\tau - \tau_0)}$$
		for all $\tau \geq \tau_0$, i.e. $N_F \to N_s$ exponentially as $\tau \to \infty$, and
		\begin{equation*}
			\lim\limits_{\tau \to \infty} 
			\left|
			\frac{F - F_s - \mu e^{\frac{3}{2}\tau - ye^{\tau}}}{ e^{\frac{3}{2}\tau - ye^{\tau}} + (1 + \gamma y)^{-\theta}}
			\right| \to 0
		\end{equation*}
		uniformly on compact sets in $[0, \infty)$. 
	\end{theorem}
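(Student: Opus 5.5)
We will prove Theorem \ref{thm-main-parabolic} in the same way as Theorem \ref{thm-main-hyperbolic}, and treat the diffusion term $e^{-\tau}\partial_y^2 F$ and the boundary layer as perturbations. The first step is to write $F = F_s + \mu e^{\frac32\tau - ye^{\tau}} + G$. We subtract off the boundary layer profile, which corresponds to $\mu e^{-x}$ in the original variables. Then $G$ has homogeneous Dirichlet data at $y=0$ up to an exponentially small error. We introduce
$$\Lambda(\tau) = \frac{\beta e^\tau}{1+e^\tau N_F^2} - \frac{\beta}{N_s^2}, \qquad \Lambda(\tau) = -\frac{2\beta}{N_s^3} N_G + O(N_G^2) + O(e^{-\tau}).$$
With this notation $G$ solves a linear equation $\partial_\tau G = L_\tau G - \Lambda(\tau)\,(y\partial_y F_s + F_s) + \text{(error terms)}$. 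Here $L_\tau$ is the transport operator of (\ref{eqn-Fs}) plus $e^{-\tau}\partial_y^2$, plus the perturbation $-\Lambda(y\partial_y + 1)$ acting on $G$. The error terms are $e^{-\tau}\partial_y^2 F_s = O(e^{-\tau}(1+\gamma y)^{-\theta-2})$ and the interaction of the boundary layer with the transport coefficients. The boundary-layer interaction is supported where $y \lesssim e^{-\tau}$, so its contribution to the first moment is exponentially small.

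The core of the argument is a closed equation for $N_G$. We multiply the $G$ equation by $y$ and integrate. For the transport part, integration by parts gives
$$\frac{d}{d\tau}N_G = -\big(\gamma - \tfrac12 + 2\gamma - \tfrac32\big) N_G + \ldots$$
More precisely, the self-consistency term contributes $-\Lambda \int y(y\partial_yF_s+F_s)\,dy = \Lambda N_s$ and combines with the linear part. The outcome should be $\dot N_G = -\kappa N_G + R(\tau)$ with $\kappa>0$. The remainder $R$ contains boundary terms, second moments of $G$ multiplied by $\Lambda$, and the diffusion and boundary-layer errors. The diffusion term contributes only a boundary term $e^{-\tau}G(0)$, which is controlled. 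The obstruction is that the flux $\int y\,\partial_y(yG)$ requires a second moment. Since $\theta>2$ but possibly $\theta<3$, this moment is infinite for $F_s$. This is why a pure moment method does not close, and the tail of $G$ must be controlled pointwise.

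To control the tail we use the comparison principle for the linearized parabolic operator, established via the hyperbolic stepping stone. We build barriers of the form
$$\pm\big(A(\tau)(1+\gamma y)^{-\theta-\varepsilon} + B(\tau)\,e^{(2-\varepsilon\gamma)\tau - ye^\tau/2}\big),$$
with $A,B$ decaying like $e^{-\varepsilon\gamma(\tau-\tau_0)}$, under a bootstrap hypothesis $|N_G(\tau)|\le K(e^{-\varepsilon\gamma\tau_0}+c_0)e^{-\varepsilon\gamma(\tau-\tau_0)}$. Applied to $(1+\gamma y)^{-\theta-\varepsilon}$, the transport part gives exactly a decay rate $\varepsilon\gamma$. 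The restriction $\varepsilon\le \frac1{2\gamma}$ should ensure this rate is no faster than the decay available from the $N_G$ equation. The forcing $\Lambda(y\partial_yF_s+F_s)\sim N_G(1+\gamma y)^{-\theta}$ decays only like $(1+\gamma y)^{-\theta}$ in $y$. We handle it by splitting $G$ into a piece proportional to $\int\Lambda$ times the explicit derivative $\partial_\gamma F_s$ direction and a remainder that decays faster. Alternatively, we can use the supersolution $(1+\gamma y)^{-\theta}$ with coefficient $\int_{\tau_0}^\tau|\Lambda|$, which is small and summable. We then check that the diffusion applied to the barriers is dominated, $e^{-\tau}\partial_y^2(1+\gamma y)^{-\theta-\varepsilon}\ll$ the gap in the transport. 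This is where the assumption on $\partial_y^2G_0$ enters, through the comparison for $\partial_y^2 G$ or for the regularized data. The Gaussian-type layer $e^{(2-\varepsilon\gamma)\tau - ye^\tau/2}$ is checked to be a supersolution near $y\sim e^{-\tau}$, using $\tau_0$ large.

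Finally we close the bootstrap. The pointwise bounds on $G$ make $R(\tau)$ of order $(e^{-\varepsilon\gamma\tau_0}+c_0)^2 e^{-\varepsilon\gamma(\tau-\tau_0)}$ plus $e^{-\tau}$. Integrating the ODE then recovers the bound on $N_G$ with a constant better than $K$, provided $c_0$ is small and $\tau_0$ is large. The convergence statement follows from the barrier bounds: on compact $y$-sets $|G|\lesssim$ decaying coefficients times $(1+\gamma y)^{-\theta} + e^{\frac32\tau-ye^\tau}$. The main obstacle, I expect, is the comparison step: proving a comparison principle for the linearized nonlocal problem with the diffusion and the boundary layer, and verifying that the barrier is a supersolution uniformly down to $y\approx e^{-\tau}$.
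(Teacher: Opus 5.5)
There is a genuine gap in the step you call the core of the argument. The first-moment equation does not close, and the term that prevents closure is not a small remainder. It is also not a second moment: since $\theta>2$, $\int_0^\infty y\,\partial_y(yG)\,dy=-N_G$. The real obstruction comes from the constant drift $\partial_y G$. Multiplying the linearized equation $\partial_\tau G=(1+\gamma y)\partial_yG+(\tfrac12+\gamma)G+J\,\partial_y(yF_s)$, $J=-\Lambda$, by $y$ and by $1$ gives
\begin{align*}
\frac{d}{d\tau}N_G&=-\int_0^\infty G\,dy-\big(\tfrac32-\gamma\big)N_G+\dots,\\
\frac{d}{d\tau}\int_0^\infty G\,dy&=\tfrac12\int_0^\infty G\,dy-G(0,\tau)+\dots .
\end{align*}
So $N_G$ is coupled at linear order to the zeroth moment. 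The zeroth moment grows like $e^{\tau/2}$ unless it is compensated by the outflow $G(0,\tau)$, i.e.\ by the tail transported to $y=0$ along characteristics. Computing this with the explicit characteristics gives, at linear order,
\[
\int_0^\infty G\,dy=-\frac{(1-\gamma)(1-2\gamma)}{2\gamma}\int_{\tau_0}^{\tau}\big(1-e^{-\gamma(\tau-r)}\big)N_G(r)\,dr+(\text{terms from the data}),
\]
which is a memory term whose kernel does not decay. Under your bootstrap hypothesis, any absolute-value bound on it is of size $K(c_0+e^{-\varepsilon\gamma\tau_0})$ with no decay in $\tau$. The same happens with your barrier $\big(\int_{\tau_0}^\tau|\Lambda|\big)(1+\gamma y)^{-\theta}$, whose coefficient increases to a positive constant. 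Hence $R$ is neither quadratically small nor decaying, and the bootstrap cannot close. Decay of $N_G$ comes only from the exact signed structure of this feedback.

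The paper captures that structure as follows. It freezes $j$, so all operators are linear and local, and computes $\int y\,\tilde T(\tau,r)\partial_y(yF_s)\,dy$ exactly by characteristics (Corollary~\ref{lemma-hyperbolic-M}(iii)). This yields a Volterra equation with kernel $1-(1-2\gamma)e^{-\gamma(\tau-r)}$, which Lemma~\ref{lemma-ODE-in-selfsim} solves via an exact $2\times 2$ linear system with eigenvalues $-(1-\gamma)$ and $-1$. The parabolic operator $S$ is then compared with the hyperbolic $T$ by the maximum principle (Lemma~\ref{approx-lemma}). That is where the bound on $\partial_y^2G_0$ is needed, because $L[T\varphi_0]=-\partial_x^2T\varphi_0$; it is not needed for barrier arguments at all.

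The same loss of sign information breaks your final step. Barriers only give $|G|\lesssim\big(\int_{\tau_0}^\infty|\Lambda|\big)(1+\gamma y)^{-\theta}+\dots$, which is small but does not decay, so it does not give convergence. Convergence holds because the exponential convergence $N_F\to N_s$ pins down the asymptotics of the characteristic map: $M'(\tau)/M(\tau)\to\gamma$, and $e^{\frac12(\tau-\tau_0)}M(\tau)^{1-\theta}$ tends to an explicit constant. The paper proves this with the Carr--Penrose-type Lemma~\ref{lemma-F-to-Fs}, after comparing $f-\mu e^{-x}-\frac{\beta\mu}{2N_s^2}\frac{x^2}{t}e^{-x}$ with $T(t,t_0)(g_0+f_s(t_0))$.

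Three smaller points:
\begin{enumerate}[(i)]
\item Subtracting only $\mu e^{\frac32\tau-ye^{\tau}}$ leaves $G(0,\tau)=-c_s$. This is of order one in self-similar variables, not exponentially small; the paper also subtracts $c_se^{-ye^{\tau}}$.
\item The restriction $\varepsilon\gamma\le\frac12$ comes from the boundary layer, not from matching barrier decay to the $N_G$ equation. The boundary layer's mass enters $h$ through $2n_{bl}n_s\sim t^{1/2}$, so $|h|\sim t^{-3/2}$, and this must be bounded by $t_0^{\eta}t^{-1-\eta}$.
\item The comparison principle is not the main obstacle. Once $N_G$ is frozen in the bootstrap, the problem is local and the standard maximum principle applies.
\end{enumerate}
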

	
	\bigskip
	Our strategy for the proof of Theorem \ref{thm-main-hyperbolic} and Theorem \ref{thm-main-parabolic} is to consider the term $\frac{1}{1 + N_F^2}$ in (\ref{eqn-main-parabolic}) as a perturbation of $\frac{1}{N_s^2}$ when $N_F \approx N_s$, which is true on a small time interval when $F$ is sufficiently close to $F_s$. We derive an integral equation for $N_F(\tau)-N_s(\tau)$ (Prop. \ref{prop-hyperbolic_NG}, Prop. \ref{prop-ng-parabolic}). We first consider the hyperbolic case, where studying the solutions of the perturbation formulation of (\ref{eqn-main-hyperbolic}) by characteristics allows us to identify the key terms and bound errors in the integral equation (Corollary \ref{lemma-hyperbolic-M}), showing by a continuation argument that $N_F(\tau)-N_s(\tau)$ remains small for all times (Lemma \ref{lemma-ODE-in-selfsim}). Inserting the estimates for $N_F(\tau)-N_s(\tau)$ into the solution by characteristics then yields the limit of $F$ (Lemma \ref{lemma-F-to-Fs}).
	
	With these results we treat the parabolic case, which carries the additional difficulty of understanding the solutions of the perturbative formulation of (\ref{eqn-main-parabolic}). We achieve this by an approximation lemma relating the solutions of the parabolic problem to the hyperbolic, for which the needed estimates are already proven (Lemma \ref{approx-lemma}). Additionally, we give an approximation of the boundary layer ensuring that $F(0,\tau) = e^{\frac{3\tau}{2}}\mu$.

	\bigskip
	In the following we will often use the key relations
	\begin{align}
		\frac{\beta}{N_s^2} = 1 {-} \gamma, \qquad
		\theta {-}1 = \frac{1}{2\gamma}, \qquad
		c_s = N_s \gamma^2 (\theta {-} 1)(\theta {-} 2),
	\end{align}
	which are easily checked using (\ref{eqn-def-gamma}) and (\ref{def-theta}).
	However, throughout the paper we do not explicitly track all positive constants depending only on $\beta, \mu, \gamma, \theta$ or $N_s$. Instead we use $c(\gamma)$ to denote such constants, which may change from line to line. 
	If a constant depends also on parameters introduced later, we write e.g. $c(\gamma, \varepsilon).$ We will work both in the original variables $(x,t)$ and the self-similar variables $(y, \tau)$, since the latter are more convenient in the hyperbolic case but the former are more natural for approximating solutions of (\ref{eqn-main-parabolic}) by solutions of (\ref{eqn-main-hyperbolic}). Functions in $x$ and $t$ are denoted by lowercase letters, while functions in the self-similar scaling are uppercase, i.e. $\varphi(x,t) = \frac{1}{t^{\frac{3}{2}}}\Phi(y,\tau)$.

	\section{The hyperbolic problem}
	The goal of this section is to prove \Cref{thm-main-hyperbolic} as preparation for \Cref{thm-main-parabolic}, including some key estimates on solutions of the transport equation when $N_G(\tau)$ is sufficiently small. To this end, let the assumptions of \Cref{thm-main-hyperbolic} hold and let
	$$f(x,t) = \frac{1}{t^{\frac{3}{2}}} F(y, \tau), \qquad 
	f_s(x, t) = t^{-\frac{3}{2}} F_s\big(\frac{x}{t}\big), \qquad 
	g(x,t) = \frac{1}{t^{\frac{3}{2}}} G(y, \tau)
	$$ 
	be the corresponding functions in original variables with mass $n_f = N_F t^{\frac{1}{2}}, n_s = N_s t^{\frac{1}{2}}$ and \linebreak $n_g = N_G t^{\frac{1}{2}}$, respectively.
	Our strategy for proving smallness of $n_g$ (resp. $N_G$) is understanding the ODEs they solve, which we derive in the next section.
	
	\subsection{Derivation of the integral equations for $n_g$ and $N_G$}\label{subsection-hyperbolic-derivation-ng}
	Our goal in this section is to derive an integral equation for $n_g$ (resp. $N_G$). To do so, we treat $n_g$ and therefore $n_f$ as  given functions and examine the resulting solutions of (\ref{eqn-main-hyperbolic}).
	
	\begin{proposition}\label{prop-ng}
		Let $t_0 > 0$ and denote $g(x,t_0) = g_0(x) + \hat{g}_0(x),$ $g_0(x) = \frac{1}{t_0^{\frac{3}{2}}} G_0(\frac{x}{t_0})$, \linebreak $\hat{g}_0(x) = \frac{1}{t_0^{\frac{3}{2}}} \hat{G}_0(\frac{x}{t_0}).$
		Define 
		\begin{align}
			\label{eqn-def-h}
			h(n_g, t) &\coloneqq \frac{1  + n_g^2 - 2n_g n_s^{-1}\left(1 + 2n_g n_s + n_g^2 \right)}{n_s^4\left(1 + \frac{2n_g n_s + n_g^2 + 1}{n_s^2} \right)} \\
			\label{eqn-def-j}
			j(t) &\coloneqq \frac{2\beta n_g }{n_s^3}
			+ \beta h(n_g, t) \\
			\label{eqn-def-mj}
			m(t, t_0) &\coloneqq \int_{t_0}^{t} \exp \Big(
			\int_{t_0}^{r} 
			-\frac{\beta}{n_s^2(\omega)} + j(\omega) 
			\d \omega \Big) \d r 
			= \int_{t_0}^{t} \Big( \frac{r}{t_0}\Big)^{-(1-\gamma)} \exp \Big(\int_{t_0}^{r} j(\omega) \Big) \d r
		\end{align}
		and define $T(t,t_0)\varphi_0$ as the solution mapping of 
			\begin{equation}\label{eqn-pde-for-T}
			\left\{ 
			\begin{split}
				\partial_t \varphi &= \partial_x \Big( \Big(1 + \beta x \Big( -\frac{1}{n_s^2} + j(t) \Big)\Big)\varphi   \Big), \quad x > 0 \\
				\varphi(x, t_0) &= \varphi_0(x).
			\end{split}
			\right.
		\end{equation}
		Then
		\begin{align}
			\label{eqn-def-Tj} 
			T(t,t_0)\varphi(x) &= m'(t, t_0) \varphi(m'(t, t_0)x + m(t, t_0)) 
			\\
			\label{eqn-g-duhamel-hyperbolic}
			g(x, t) &= T(t, t_0)(g_0 + \hat{g}_0)
			+ \int_{t_0}^{t} 
			\beta \Big(\frac{2n_g(r) }{n_s^3(r)}
			+ h(n_g(r), r)\Big)  T(t, r)  \partial_x(xf_s(x,r)) 
			\d r
		\end{align}
		and
		\begin{equation}\label{eqn-ng-duhamel-hyperbolic}
			\begin{split}
				n_g(t) &= \int_{0}^{\infty} x T(t, t_0)(g_0 + \hat{g}_0) \d x \\ 
				&\quad+ \int_{t_0}^{t} 
				\beta \Big(\frac{2n_g(r) }{n_s^3(r)}
				+ h(n_g(r), r)\Big) \int_{0}^{\infty} x T(t, r)  \partial_x(xf_s(x,r)) \d x
				\d r.
			\end{split}
		\end{equation}
	\end{proposition}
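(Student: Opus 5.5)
The plan is to derive a linear, nonautonomous transport equation for $g = f - f_s$ in the original variables, with $n_g$ treated as a given function of time. I would then solve it explicitly by characteristics and apply Duhamel's formula.

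\textbf{Step 1: $f_s$ is an exact solution.} First I check that $f_s(x,t) = t^{-3/2}F_s(x/t)$ solves
$$\partial_t f_s = \partial_x\Big(\Big(1 - \frac{\beta}{n_s^2}x\Big)f_s\Big).$$
Here $n_s = N_s t^{1/2}$, so $\beta/n_s^2 = (1-\gamma)/t$. Substituting the self-similar ansatz reduces this equation exactly to (\ref{eqn-Fs}), using $\beta/N_s^2 = 1-\gamma$. Note that the "1" in $1+n_f^2$ is not present here.

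\textbf{Step 2: the equation for $g$.} Next I expand the nonlinear coefficient around $n_s$ by writing
$$-\frac{\beta}{1+n_f^2} = -\frac{\beta}{n_s^2} + j(t), \qquad n_f = n_s + n_g .$$
Here $j$ is as in (\ref{eqn-def-j}), i.e. $j = 2\beta n_g/n_s^3 + \beta h$, and I read the coefficient in (\ref{eqn-pde-for-T}) in this sense. The claim is the elementary algebraic identity
$$\frac{1}{n_s^2} - \frac{1}{1+n_f^2} = \frac{2n_g}{n_s^3} + h(n_g,t).$$
I verify it by putting everything over the common denominator $n_s^2(1+n_f^2) = n_s^4\big(1 + (2n_gn_s + n_g^2+1)/n_s^2\big)$ and comparing numerators. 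Subtracting the equation for $f_s$ from that for $f$ then gives
$$\partial_t g = \partial_x\Big(\Big(1 + \Big(-\frac{\beta}{n_s^2}+j\Big)x\Big)g\Big) + j(t)\,\partial_x\big(xf_s\big).$$
This is linear in $g$ once $n_g$, and hence $j$, is regarded as given.

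\textbf{Step 3: solving the homogeneous problem.} Write $a(t) = -\beta/n_s^2 + j$. The velocity field is affine in $x$, so the solution map should be a pure rescaling of the initial datum. Set $m' = \partial_t m = \exp\big(\int_{t_0}^t a\big)$, so that $m'' = a m'$ and $m(t_0,t_0)=0$, $m'(t_0,t_0)=1$. For $\varphi = m'\varphi_0(m'x+m)$ I compute both sides of (\ref{eqn-pde-for-T}):
$$\partial_t\varphi = m''\varphi_0 + m'\varphi_0'\,(m''x + m') = a m'\varphi_0 + (1+ax)\,m'^2\varphi_0',$$
and $\partial_x\big((1+ax)\varphi\big)$ gives exactly the same expression. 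This proves (\ref{eqn-def-Tj}). The explicit form of $m$ in (\ref{eqn-def-mj}) follows from $\int_{t_0}^r \beta/n_s^2 = (1-\gamma)\ln(r/t_0)$. Since $x\ge 0$ maps to $m'x+m\ge0$ with $m\ge0$, characteristics leave the half line through $x=0$ and no boundary condition is needed.

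\textbf{Step 4: Duhamel formula and moments.} The variation-of-constants formula for the inhomogeneous linear problem gives (\ref{eqn-g-duhamel-hyperbolic}). Multiplying by $x$ and integrating over $(0,\infty)$ gives (\ref{eqn-ng-duhamel-hyperbolic}), after exchanging the $x$- and $r$-integrals by Fubini.

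The step I expect to need the most care is justifying these manipulations rather than the formal algebra. One must check that all first moments are finite and that the exchange of integrals is legitimate. This follows because $xf_s$ decays like $x^{1-\theta}$ with $\theta>2$, so $\partial_x(xf_s)$ is integrable against $x$. The data $g_0,\hat g_0$ likewise have finite first moments by (\ref{thm-main-hyperbolic-ass-G0}), since $\theta+\varepsilon>2$, and $T$ preserves these bounds because it is a rescaling. One also needs $n_f$ to be well defined, so that $1+n_f^2>0$ and $j$ is locally bounded; this is automatic, and it makes the linear problem uniquely solvable.
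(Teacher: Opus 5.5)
Your proposal is correct and follows the paper's proof essentially step by step. Both use the same algebraic identity for $\frac{1}{n_s^2}-\frac{1}{1+n_f^2}$, subtract the equation satisfied by $f_s$, apply Duhamel's principle, and integrate against $x$. You also verify two things the paper takes for granted, the rescaling formula \eqref{eqn-def-Tj} and the equation for $f_s$. Reading the coefficient in \eqref{eqn-pde-for-T} as $-\beta/n_s^2+j$, consistent with the definition of $m$, is the right way to resolve the paper's stray factor of $\beta$ there.
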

	We will occasionally denote $m(t) = m(t, t_0), m'(t) = m'(t, t_0)$ when the choice of $t_0$ is clear. 
	\begin{proof}
		It holds
		\begin{align*}
			\frac{1}{1 +  n_f^2 }
			&=  \frac{1}{n_s^2 }
			- \frac{1 +  n_f^2  - n_s^2}{n_s^2 (1 +  n_f^2)}
			= \frac{1}{n_s^2 } 
			- \frac{1 + 2n_g n_s  + n_g^2}{n_s^4 \Big(1 + \frac{2n_g n_s  + n_g^2 + 1}{n_s^2 } \Big)} \\
			&= \frac{1}{n_s^2 } - \frac{2n_g }{n_s^3}
			- h(n_g, t),
		\end{align*}
		so (\ref{eqn-main-hyperbolic}) can be rewritten to
		\begin{equation*}
			\partial_t f = \partial_x \Big(  \Big(1 + \beta x \Big( -\frac{1}{n_s^2} + \frac{2n_g }{n_s^3}
			+ h(n_g, t) \Big)\Big)f   \Big).
		\end{equation*}
		
		Since $f_s$ solves
		\begin{equation*}
			\partial_t f_s = \partial_x \Big( \Big(1 - \frac{\beta x}{n_s^2}\Big)f_s\Big)
		\end{equation*}
		it follows that
		\begin{equation}\label{eqn-diffeq-g-hyperbolic}
			\begin{split}
				\partial_t g 
				&= \Big(1 + \beta x \Big(-\frac{1}{n_s^2} + \frac{2n_g }{n_s^3}
				+ h(n_g, t) \Big)  \Big)\partial_x g
				+ \beta \Big(-\frac{1}{n_s^2} + \frac{2n_g }{n_s^3}
				+ h(n_g, t) \Big) g \\
				&\quad + \partial_x \Big( \Big(\frac{2n_g }{n_s^3}
				+ h(n_g, t)\Big) \beta x f_s  \Big).
			\end{split}
		\end{equation}
		\Cref{eqn-g-duhamel-hyperbolic} follows by Duhamel's principle, and (\ref{eqn-ng-duhamel-hyperbolic}) by integration. Finally, it holds
		\begin{align*}
			m(t, t_0) &= \int_{t_0}^{t} \exp \Big(
			\int_{t_0}^{r} 
			-\frac{\beta}{N_s^2 \omega } + j(\omega) 
			\d \omega \Big) \d r 
			= \int_{t_0}^{t} \Big( \frac{r}{t_0}\Big)^{-(1-\gamma)} \exp \Big(\int_{t_0}^{r} j(\omega) \Big) \d r 
		\end{align*}
		since $N_s$ is a constant. 
	\end{proof}
It is occasionally more convenient to work in self-similar variables, where we obtain a similar result:
\begin{corollary}\label{prop-hyperbolic_NG}
	Let $\tau_0 > 0$ and $G$ be as in Theorem \ref{thm-main-hyperbolic}.
	Define 
	\begin{align}
		\label{eqn-def-H}
		H(N_G, \tau) &\coloneqq \frac{ e^{-\tau} + N_G^2 - 2N_G N_s^{-1} (e^{-\tau} + 2N_sN_G + N_G^2) }{N_s^4 \Big(1 + \frac{e^{-\tau} + 2N_sN_G + N_G^2}{N_s^2} \Big)}
		= t h(n_g(t), t) \\
		\label{eqn-def-J}
		J(\tau) &\coloneqq \frac{2\beta N_G}{N_s^3} + \beta H(N_G, \tau)
		= tj(t) \\
		\label{eqn-def-MJ}
		M(\tau, \tau_0) &\coloneqq \int_{\tau_0}^{\tau} \exp \Big( \int_{\tau_0}^{r} \gamma + J(\omega) \d \omega \Big) \d r
		= t_0^{-1}m(t, t_0)
	\end{align}
	and define $\tilde{T}(\tau, \tau_0)$ as the solution mapping of
		\begin{equation*}
		\left\{ 
		\begin{split}
			\partial_\tau \Phi &= \Big(1 + \big(\gamma + \beta J(\tau)\big)\Big)\partial_y\Phi + \Big(\frac{3}{2} + \gamma + J(\tau) \Big)\Phi,
			 \quad y > 0 \\
			\Phi(y, \tau_0) &= \Phi_0(y).
		\end{split}
		\right.
	\end{equation*}
	Then
	\begin{align}
		\label{eqn-def-tildeTJ} 
		\tilde{T}(\tau,\tau_0)\Phi(y) &= e^{\frac{1}{2}(\tau - \tau_0)}M'(\tau, \tau_0) \Phi(M'(\tau, \tau_0)y + M(\tau, \tau_0)) \\
		&= t^{\frac{3}{2}} T(t, t_0)\varphi(x) 
		\hspace*{0,5cm} \text{for } \varphi(x) = t_0^{-\frac{3}{2}} \Phi\Big(\frac{x}{t_0}\Big) \nonumber \\
		\label{eqn-G-duhamel-hyperbolic-selfsim}
		G(y, \tau) &= \tilde{T}(\tau, \tau_0)(G_0(y) + \hat{G}_0(y)) \\
			&\quad+ \int_{\tau_0}^{\tau} 
			\beta \Big( \frac{2N_G(r)}{N_s^3} + H(N_G, r)\Big)  \tilde{T}(\tau, r)  \partial_y(yF_s(y,r)) 
			\d r
	\end{align}
	and
	\begin{equation}\label{eqn-NG-duhamel-hyperbolic-selfsim}
		\begin{split}
			N_G(\tau) &= \int_{0}^{\infty} y \tilde{T}(\tau, \tau_0)(G_0(y) + \hat{G}_0(y)) \d y \\
			&\quad + \int_{\tau_0}^{\tau} 
			\beta \Big( \frac{2N_G(r)}{N_s^3} + H(N_G, r)\Big) \int_{0}^{\infty} y \tilde{T}(\tau, r)  \partial_y(yF_s(y,r))  \d y
			\d r.
		\end{split}
	\end{equation}
\end{corollary}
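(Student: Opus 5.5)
The plan is to transfer Proposition \ref{prop-ng} to the self-similar variables by direct substitution, using $t = e^\tau$, $y = x/t$, $f = t^{-3/2}F$, $n_s = N_s t^{1/2}$ and $n_g = N_G t^{1/2}$.

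First I verify the algebraic identities \eqref{eqn-def-H} and \eqref{eqn-def-J}. Substituting into \eqref{eqn-def-h} and factoring $t$ out of numerator and denominator gives
\[
1 + n_g^2 - 2n_g n_s^{-1}(1 + 2n_g n_s + n_g^2) = t\bigl(e^{-\tau} + N_G^2 - 2N_G N_s^{-1}(e^{-\tau} + 2N_sN_G + N_G^2)\bigr),
\]
while $n_s^4 = N_s^4t^2$ and $(2n_gn_s+n_g^2+1)/n_s^2 = (2N_GN_s+N_G^2+e^{-\tau})/N_s^2$. Hence $h = t^{-1}H$, that is, $H = th$. Likewise $2\beta n_g/n_s^3 = t^{-1}\,2\beta N_G/N_s^3$, so $J = tj$.

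Next I treat $M$. In \eqref{eqn-def-mj} I change variables $\omega = e^{\sigma}$, which gives $j(\omega)\,\mathrm{d}\omega = J(\sigma)\,\mathrm{d}\sigma$, and $r = e^{\rho}$, which gives $\mathrm{d}r = e^{\rho}\,\mathrm{d}\rho$. Then
\[
m(t,t_0) = \int_{\tau_0}^{\tau} e^{-(1-\gamma)(\rho-\tau_0)} e^{\rho} \exp\Bigl(\int_{\tau_0}^{\rho} J\Bigr)\mathrm{d}\rho = t_0\int_{\tau_0}^{\tau} e^{\gamma(\rho-\tau_0)}\exp\Bigl(\int_{\tau_0}^\rho J\Bigr)\mathrm{d}\rho = t_0 M(\tau,\tau_0).
\]
Consequently $m' = \partial_t m = t_0 M'(\tau)/t$, where $M' = \partial_\tau M$.

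For \eqref{eqn-def-tildeTJ} I take $\varphi = t_0^{-3/2}\Phi(\cdot/t_0)$ and compute
\[
t^{3/2}T(t,t_0)\varphi(x) = t^{3/2} m'\, t_0^{-3/2}\Phi\bigl((m'x+m)/t_0\bigr).
\]
The argument is $(m'x+m)/t_0 = M'y + M$. The prefactor is $(t/t_0)^{3/2}\, t_0M'/t = e^{\frac12(\tau-\tau_0)}M'$. This gives the stated formula.

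I then check that this function solves the rescaled PDE defining $\tilde T$. The cleanest route is to rescale \eqref{eqn-pde-for-T} exactly as \eqref{eqn-main-hyperbolic} was rescaled to \eqref{eqn-hyperbolic-selfsim-version}, replacing $-\beta/(1+n_f^2)$ by $-\beta/n_s^2 + j$. Multiplying by $t$ turns this coefficient into $-(1-\gamma) + J$, so the drift coefficient becomes $1 + (\gamma + J)y$ and the zeroth-order coefficient becomes $\tfrac32 - 1 + \gamma + J$. I expect the main obstacle to be exactly this bookkeeping. The displayed PDE for $\tilde T$ appears to contain typos ($\beta J$ in place of $J$, and a missing $y$). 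I would present the rescaled equation that is actually satisfied and note that \eqref{eqn-def-tildeTJ} is its solution formula. Uniqueness of that solution follows from the method of characteristics.

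Finally, I rescale the Duhamel formula \eqref{eqn-g-duhamel-hyperbolic}. Multiplying by $t^{3/2}$ gives $t^{3/2}T(t,t_0)(g_0+\hat g_0) = \tilde T(\tau,\tau_0)(G_0+\hat G_0)$. In the integral term, substituting $r = e^{\rho}$ gives $\mathrm{d}r = r\,\mathrm{d}\rho$, and
\[
\beta\bigl(2n_g/n_s^3 + h\bigr) = r^{-1}\beta\bigl(2N_G/N_s^3 + H\bigr).
\]
Also $\partial_x(xf_s(x,r)) = r^{-3/2}\partial_y(yF_s)$ with $F_s$ evaluated at $x/r$. This is precisely $\varphi = r^{-3/2}\Phi(\cdot/r)$ with $\Phi = \partial_y(yF_s)$, so $t^{3/2}T(t,r)$ of it equals $\tilde T(\tau,\rho)\Phi$. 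The factors $r^{-1}$ and $r$ cancel, which yields \eqref{eqn-G-duhamel-hyperbolic-selfsim}. Multiplying by $y$ and integrating gives \eqref{eqn-NG-duhamel-hyperbolic-selfsim}. Alternatively, the same conclusion follows from $\int xg\,\mathrm{d}x = t^{1/2}N_G$ together with $\int x\,t^{-3/2}\Psi(x/t)\,\mathrm{d}x = t^{1/2}\int y\Psi\,\mathrm{d}y$, which makes the $t^{1/2}$ factors cancel consistently.
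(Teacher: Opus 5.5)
Your proposal is correct and is essentially the paper's argument. The paper expands $e^{\tau}/(1+e^{\tau}N_F^2)$ around $1/N_s^2$ directly in self-similar variables, writes the linear equation for $G$ with source $J\,\partial_y(yF_s)$, applies Duhamel there, and leaves $H=th$, $J=tj$, $M=t_0^{-1}m$ and $\tilde T\leftrightarrow t^{3/2}T$ as ``easily checked''; you verify exactly those identities and transport the Duhamel formula of Proposition \ref{prop-ng}, which is the same change of variables done in the other order.

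Your correction of the displayed equation for $\tilde T$ is also right, and its $\tfrac32$ should likewise read $\tfrac12$. Your $\tfrac32-1+\gamma+J$ gives this, the prefactor $e^{\frac12(\tau-\tau_0)}$ in \eqref{eqn-def-tildeTJ} requires it, and the paper's own equation for $G$ (drift $1+(\gamma+J)y$, zeroth-order coefficient $\tfrac12+\gamma+J$) confirms it.
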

\begin{proof}
	Analogously to \Cref{prop-ng} we compute
		\begin{align*}
				  \frac{e^\tau}{1 + e^\tau N_F^2}
				&= \frac{1}{N_s^2} 
				- \frac{e^{-\tau} + N_F^2 - N_s^2}{N_s^4 \Big(1 + \frac{e^{-\tau} + N_F^2 - N_s^2}{N_s^2} \Big)}
				= \frac{1}{N_s^2}
				- \frac{e^{-\tau} + 2N_sN_G + N_G^2}{N_s^4 \Big(1 + \frac{e^{-\tau} + 2N_sN_G + N_G^2}{N_s^2} \Big)} \\
				&= \frac{1}{N_s^2}
				-\frac{2N_G}{N_s^3} - H(N_G, \tau)
			\end{align*}
	and
		\begin{align*}
				\partial_\tau G &= 
				\Big( 1 + \Big( 1 - \frac{\beta}{N_s^2} + \frac{2\beta N_G}{N_s^3} + \beta H(N_G, \tau)\Big) y\Big) \partial_y G
				+
				\Big(\frac{3}{2} - \frac{\beta}{N_s^2} + \frac{2\beta N_G}{N_s^3} + \beta H(N_G, \tau)  \Big) G 
				\\
				&\quad +  \Big( \frac{2\beta N_G}{N_s^3} + \beta H(N_G, \tau) \Big) \partial_y \big( yF_s \big)
				\\
				&= \Big( 1 + \Big( \gamma + \frac{2\beta N_G}{N_s^3} + \beta H(N_G, \tau)\Big) y\Big) \partial_y G
				+
				\Big(\frac{1}{2} + \gamma + \frac{2\beta N_G}{N_s^3} + \beta H(N_G, \tau)  \Big) G 
				\\
				&\quad +  \Big( \frac{2\beta N_G}{N_s^3} + \beta H(N_G, \tau) \Big) \partial_y \big( yF_s \big)
			\end{align*}
	Again (\ref{eqn-G-duhamel-hyperbolic-selfsim}) follows by Duhamel's principle and (\ref{eqn-NG-duhamel-hyperbolic-selfsim}) by integration. The relations between $h, j, T$ and $m$ and their self-similar scaling counterparts $H, J, \tilde{T}$ and $M$ are easily checked.
\end{proof}

\subsection{Estimates for the solution of the hyperbolic equation}
To understand the integral equations (\ref{eqn-ng-duhamel-hyperbolic}) and (\ref{eqn-NG-duhamel-hyperbolic-selfsim}), we study the operators $T$ and $\tilde{T}$ under suitable smallness assumptions on $j$ and $J$, respectively. 

	\begin{lemma}\label{lemma-hyperbolic-m}
		Let $t_0 \geq 1$ and $j$ as in (\ref{eqn-def-j}) satisfy $|j(t)| \leq c_j t_0^\eta t^{-1 - \eta}$ for $c_j, \eta > 0$. Define $T(t, t_0)$ and $m(t, t_0)$ as in $(\ref{eqn-def-Tj})$ and $(\ref{eqn-def-mj})$. Then for all $t \geq t_0$ it holds
		\begin{enumerate}[(i)]
			\item $c_m^{-1}\Big(\frac{t}{t_0} \Big)^{-(1-\gamma)} \leq m'(t, t_0) \leq c_m \Big(\frac{t}{t_0} \Big)^{-(1-\gamma)}$
			 for $c_m = e^{\frac{c_j}{\eta}} \geq 1,$
			\item $c_m^{-1}\gamma^{-1}t_0\Big( \Big(\frac{t}{t_0} \Big)^{\gamma } - 1 \Big) \leq m(t, t_0) \leq c_m\gamma^{-1}t_0\Big( \Big(\frac{t}{t_0} \Big)^{\gamma } - 1 \Big),$
			\item and
			\begin{equation*}
				\frac{2\beta}{N_s^3}\int_{0}^{\infty} xT(t,r)\partial_x(xf_s(x,r)) \d x 
				= -\frac{1 - \gamma}{\gamma} t^{\frac{1}{2}}
				\Big(
				1 
				- (1 - 2 \gamma) \Big(\frac{t}{r} \Big)^{-\gamma}
				+ \delta_1(r)
				\Big)
			\end{equation*}
			for $|\delta_1(r)| \leq c(\gamma, \eta)c_j.$ 
		\end{enumerate}
	\end{lemma}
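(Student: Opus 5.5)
The plan is to compute everything explicitly in the unperturbed case $j\equiv 0$, and then treat $j$ as a multiplicative perturbation. The basic input is the bound
$$\int_{s}^{t}|j(\omega)|\d\omega \le c_j t_0^\eta \int_{s}^{\infty}\omega^{-1-\eta}\d\omega = \frac{c_j}{\eta}\Big(\frac{t_0}{s}\Big)^{\eta}\le \frac{c_j}{\eta},$$
valid for every $s\ge t_0$. It controls $\exp(\int_s^t j)$ uniformly above and below, and this uniformity in the starting time $s$ is what makes the estimates usable for $T(t,r)$ with arbitrary $r\ge t_0$, not only for $r=t_0$.

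\textbf{Parts (i) and (ii).} Differentiating \eqref{eqn-def-mj} gives
$$m'(t,t_0)=\Big(\frac{t}{t_0}\Big)^{-(1-\gamma)}\exp\Big(\int_{t_0}^t j\Big).$$
The bound above then yields (i) with $c_m=e^{c_j/\eta}$. Integrating (i) from $t_0$ to $t$ and using
$$\int_{t_0}^t (r/t_0)^{-(1-\gamma)}\d r=\gamma^{-1}t_0\big((t/t_0)^\gamma-1\big)$$
gives (ii). The same arguments apply with $t_0$ replaced by any $r\ge t_0$, with the same constant $c_m$.

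\textbf{Part (iii): exact identity.} Write $\psi=\partial_x(xf_s(\cdot,r))$, so that $T(t,r)\psi(x)=m'\psi(m'x+m)$ with $m=m(t,r)$ and $m'=m'(t,r)$. Substituting $z=m'x+m$ gives
$$\int_0^\infty xT(t,r)\psi\d x=\frac{1}{m'}\int_m^\infty (z-m)\,\partial_z(zf_s)\d z .$$
Integrating by parts, the boundary terms vanish: at $z=m$ because of the factor $z-m$, and at infinity because $z^2 f_s\sim z^{2-\theta}\to 0$ since $\theta>2$. This leaves
$$\int_0^\infty xT(t,r)\psi\d x=-\frac{1}{m'}\int_m^\infty zf_s(z,r)\d z=-\frac{r^{1/2}}{m'}\int_{m/r}^\infty yF_s(y)\d y .$$
The tail integral is computed explicitly with the substitution $u=1+\gamma y$, together with $c_s=N_s\gamma^2(\theta-1)(\theta-2)$. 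Setting $u_*=1+\gamma m/r$, this gives
$$\int_{m/r}^\infty yF_s\d y=N_s\,\Phi(u_*),\qquad \Phi(u)\coloneqq(\theta-1)u^{2-\theta}-(\theta-2)u^{1-\theta}.$$

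\textbf{Part (iii): the case $j=0$.} Here $m'=(t/r)^{-(1-\gamma)}$ and $u_*=(t/r)^\gamma$. Using $\theta-1=\frac1{2\gamma}$ and $\theta-2=\frac{1-2\gamma}{2\gamma}$, one finds $u_*^{2-\theta}=(t/r)^{\gamma-1/2}$ and $u_*^{1-\theta}=(t/r)^{-1/2}$. The expression therefore collapses to
$$-\frac{N_s}{2\gamma}\,t^{1/2}\big(1-(1-2\gamma)(t/r)^{-\gamma}\big).$$
Multiplying by $\frac{2\beta}{N_s^3}=\frac{2(1-\gamma)}{N_s}$ produces exactly the main term in (iii), with $\delta_1=0$.

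\textbf{Part (iii): general $j$.} By the argument in (i), $m'=m'_0e^{a}$ with $|a|\le c_j/\eta$. The ratio $m/m_0$ lies between $e^{-c_j/\eta}$ and $e^{c_j/\eta}$, so $u_*=u_{*,0}(1+O(c_j))$. Since $|u\Phi'(u)|\lesssim u^{2-\theta}+u^{1-\theta}\lesssim \Phi_{\max}(u)$ on $u\ge1$, the relative perturbation of $u_*$ changes $r^{1/2}\Phi(u_*)/m'$ by at most $c(\gamma,\eta)c_j\,t^{1/2}$. This is exactly the form $t^{1/2}\delta_1(r)$ required.

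The step I expect to be the main obstacle is making this last estimate genuinely linear in $c_j$, i.e.\ proving $|e^{\pm c_j/\eta}-1|\le c\,c_j$. This holds once $c_j\le 1$, say. I would first restrict to that regime, which is the one relevant in the later continuation argument, and absorb the exponential into $c(\gamma,\eta)$. If large $c_j$ must also be covered, the crude bound $|\delta_1|\le c(\gamma)e^{2c_j/\eta}$ would have to be accepted instead.
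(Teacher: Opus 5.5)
Your proof is correct and follows essentially the paper's route: you evaluate $\int_0^\infty xT(t,r)\partial_x(xf_s)\,\mathrm{d}x$ exactly in terms of $u_*=1+\gamma m(t,r)/r$, by integration by parts where the paper splits it as $\frac{1}{m'}\int_m^\infty x\varphi-\frac{m}{m'}\int_m^\infty\varphi$. You then perturb this formula using the two-sided bounds (i)--(ii), and you rightly make explicit that these bounds hold uniformly in the starting time $r\ge t_0$. Your caveat that the bound is linear in $c_j$ only for bounded $c_j$ is also implicit in the paper, whose final step $\frac{2\theta}{\eta}c_jc_m^\theta\le c(\gamma,\eta)c_j$ likewise absorbs $c_m^\theta=e^{\theta c_j/\eta}$ into the constant.
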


	\begin{proof}
		\begin{enumerate}[(i)]
		\item 
		It holds 
		\begin{align*}
			\Big|\int_{t_0}^{t}  j(r) \d r \Big|
			&\leq c_j t_0^\eta \int_{t_0}^{t} r^{-1 - \eta} \d r
			\leq \frac{c_j}{\eta},
		\end{align*}
		which combined with (\ref{eqn-def-mj}) and $c_m = e^{\frac{c_j}{\eta}}$ yields the desired bound.
		
		\item Follows from $(i)$.
		
		\item For $\varphi\in L^1((1 + x) \d x)$ it holds
		$$\int_{0}^{\infty} xT(t,r)\varphi(x) \d x = \frac{1}{m'(t,r)}\int_{m(t,r)}^{\infty} x\varphi(x) \d x - \frac{m(t,r)}{m'(t,r)}\int_{m(t,r)}^{\infty} \varphi(x) \d x,$$
		so we compute
		$$\int_{m(t,r)}^{\infty} \partial_x(xf_s(x,r)) \d x = -m(t,r)c_s r^{-\frac{3}{2}} 
		\Big(1 + \frac{\gamma m(t,r)}{r}\Big)^{-\theta}.$$
		Using
		$$\int_{m(t,r)}^{\infty} x\Big(1 + \frac{\gamma x}{r}\Big)^{-\theta} \d x
		= r^2 \gamma^{-2} \Big(-\frac{1}{\theta - 1}\Big(1 + \frac{\gamma m(t,r)}{r}\Big)^{-(\theta - 1)}  + \frac{1}{ \theta - 2}\Big(1 + \frac{\gamma m(t,r)}{r}\Big)^{-(\theta - 2)} \Big)$$
		we estimate
		\begin{align*}
			\int_{m(t,r)}^{\infty} x \partial_x (xf_s) \d x
			&= c_s r^{\frac{1}{2}} \gamma^{-2} 
			\Big(\frac{1}{\theta - 1}\Big(1 + \frac{\gamma m(t,r)}{r}\Big)^{-(\theta - 1)} 
			- \frac{1}{\theta - 2}\Big(1 + \frac{\gamma m(t,r)}{r}\Big)^{-(\theta - 2)} \Big) \\
			&\quad - m(t,r)^2c_s r^{-\frac{3}{2}}\Big(1 + \frac{\gamma m(t,r)}{r}\Big)^{-\theta}.
		\end{align*}
		All together this yields
		\begin{align*}
			&\quad \; \int_{0}^{\infty} xT(t,r)\partial_x(xf_s(x,r)) \d r \\
			&= \frac{c_s}{m'(t,r)} r^{\frac{1}{2}} \gamma^{-2}
			\Big(  
			-\frac{1}{\theta - 2} \Big(1 + \frac{\gamma m(t,r)}{r}\Big)^{- (\theta - 2)}
			+ \frac{1}{\theta - 1}  
			\Big(1 + \frac{\gamma m(t,r)}{r}\Big)^{ - (\theta - 1)}
			\Big).
		\end{align*}
		Using $(ii)$ and $c_m^{-1} \leq 1$ we have for any exponent $k > 0$ that
		$$\Big(1 + \frac{\gamma m(t,r)}{r}\Big)^{-k}
		\leq \Big(1 - c_m^{-1} + c_m^{-1}\Big(\frac{t}{r} \Big)^{\gamma} \Big)^{-k} \leq c_m^{k}\Big(\frac{t}{r} \Big)^{-\gamma k}
		$$
		and
		$$\Big(1 + \frac{\gamma m(t,r)}{r}\Big)^{-k}
		\geq \Big(1 - c_m + c_m\Big(\frac{t}{r}\Big)^{\gamma} \Big)^{-k} \geq c_m^{-k}\Big(\frac{t}{r}\Big)^{-\gamma k}.
		$$
		So keeping in mind $\theta > 2$, $\frac{\beta}{N_s^2} = 1 - \gamma$ and $c_s = N_s \gamma^2 (\theta - 1)(\theta - 2)$ we calculate
		\begin{align*}
			&\quad \frac{2\beta}{N_s^3} \int_{0}^{\infty} xT(t,r)\partial_x(xf_s(x,r)) \d r \\
			&= 2(1 - \gamma)(\theta - 1)(\theta - 2) \frac{r^{\frac{1}{2}}}{m'(t,r)}
			\Big(  
			-\frac{1}{\theta - 2} \Big(1 + \frac{\gamma m(t,r)}{r}\Big)^{- (\theta - 2)}
			+ \frac{1}{\theta - 1}  
			\Big(1 + \frac{\gamma m(t,r)}{r}\Big)^{ - (\theta - 1)}
			\Big)
			\\
			&\leq 2(1 - \gamma)(\theta - 1)(\theta - 2) c_m r^{\frac{1}{2}} \Big(\frac{t}{r} \Big)^{\gamma - 1}
			 \Big(
			-\frac{c_m^{- \theta + 2}}{\theta - 2} \Big(\frac{t}{r} \Big)^{\gamma (- \theta + 2)}
			+ \frac{c_m^{\theta - 1}}{\theta - 1}\Big(\frac{t}{r} \Big)^{\gamma (- \theta + 1)}
			\Big) \\
			&= -\frac{1 - \gamma}{\gamma} t^{\frac{1}{2}} 
			\Big(  
			c_m^{3-\theta}
			- c_m^{\theta} \frac{\theta - 2}{\theta - 1}\Big(\frac{t}{r} \Big)^{-\gamma}
			\Big) 
			= -\frac{1 - \gamma}{\gamma} t^{\frac{1}{2}} 
			\Big(  
			c_m^{3-\theta}
			- c_m^{\theta} (1 - 2\gamma)\Big(\frac{t}{r} \Big)^{-\gamma}
			\Big)
		\end{align*}
		and analogously
		\begin{align*}
			\frac{2\beta}{N_s^3} \int_{0}^{\infty} xT(t,r)\partial_x(xf_s(x,r)) \d r 
			&\geq
			-\frac{1 - \gamma}{\gamma} t^{\frac{1}{2}} 
			\Big(  
			c_m^{-3+\theta}
			- c_m^{-\theta} (1 - 2\gamma)\Big(\frac{t}{r} \Big)^{-\gamma}
			\Big)
		\end{align*}
		Thus we have
		\begin{align*}
			\frac{2\beta}{N_s^3}\int_{0}^{\infty} xT(t,r)\partial_x(xf_s(x,r)) \d r 
			&=  -\frac{1 - \gamma}{\gamma} t^{\frac{1}{2}}
			\Big(
			1 
			- (1 - 2\gamma) \Big(\frac{t}{r} \Big)^{-\gamma}
			+ \delta_1(r)
			\Big),
		\end{align*}
		where using Taylor's formula we obtain
		\begin{align*}
			|\delta_1(r)| 
			&\leq \max \big\{ \big| c_m^{-3+\theta} - 1 \big|, \big| c_m^{3-\theta} - 1 \big|\big\}
			+ \max \big\{ \big| c_m^{-\theta} - 1 \big|, \big| c_m^{\theta} - 1 \big|\big\} 
			\leq \frac{2 \theta }{\eta} c_j c_m^\theta \leq c(\gamma, \eta) c_j.
		\end{align*}
		\end{enumerate}
	\end{proof}
	
The corresponding result in the self-similar scaling reads as follows.
\begin{corollary}\label{lemma-hyperbolic-M}
	Let  $\tau_0 \geq 0$ and $J$ as in (\ref{eqn-def-J}) satisfy $|J(\tau)| \leq c_J e^{-\eta (\tau - \tau_0)}$ for $c_j, \eta > 0$. Define $\tilde{T}(\tau, \tau_0)$ and $M(\tau, \tau_0)$ as in $(\ref{eqn-def-tildeTJ})$ and $(\ref{eqn-def-MJ})$. Let $c_m$ be as in \Cref{lemma-hyperbolic-m} with $c_J$ replacing $c_j$. Then for all $\tau \geq \tau_0$ it holds
	\begin{enumerate}[(i)]
		\item $c_m^{-1}e^{\gamma (\tau - \tau_0)} \leq M'(t, t_0) \leq c_m e^{\gamma (\tau - \tau_0)},$
		\item $c_m^{-1}\gamma^{-1}\big( e^{\gamma (\tau - \tau_0)} - 1 \big) \leq M(t, t_0) \leq c_m\gamma^{-1}\big( e^{\gamma(\tau - \tau_0)} - 1 \big),$
		\item 
		\begin{equation*}
			\frac{2\beta}{N_s^3}\int_{0}^{\infty} y \tilde{T}(\tau,r)\partial_y(yF_s(y)) \d y 
			= -\frac{1 - \gamma}{\gamma} 
			\big(
			1 
			- (1 - 2\gamma) e^{-\gamma(\tau - r)}
			+ \Delta_1(r)
			\big)
		\end{equation*}
		for $|\Delta_1(r)| \leq c(\gamma, \eta)c_j.$ 
	\end{enumerate}
\end{corollary}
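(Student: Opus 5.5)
The plan is to transfer \Cref{lemma-hyperbolic-m} to self-similar variables through the dictionary in \Cref{prop-hyperbolic_NG}, namely $J(\tau)=tj(t)$, $M(\tau,\tau_0)=t_0^{-1}m(t,t_0)$ and $\tilde T(\tau,\tau_0)\Phi = t^{3/2}T(t,t_0)\varphi$ with $t=e^\tau$, $t_0=e^{\tau_0}$. Alternatively, I would redo the short computations directly in $\tau$. Both routes are elementary. The only real step is to check that the hypothesis on $J$ becomes the hypothesis of the lemma on $j$.

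First, suppose $|J(\tau)|\le c_J e^{-\eta(\tau-\tau_0)}$. Then
\begin{equation*}
|j(t)| = t^{-1}|J(\tau)| \le c_J t^{-1}\Big(\frac{t}{t_0}\Big)^{-\eta} = c_J t_0^{\eta} t^{-1-\eta},
\end{equation*}
which is exactly the assumption of \Cref{lemma-hyperbolic-m} with $c_j=c_J$. The restriction $t_0\ge 1$ there corresponds to $\tau_0\ge 0$. For (i) it is cleaner to argue directly. Differentiating \eqref{eqn-def-MJ} gives $M'(\tau,\tau_0)=\exp\big(\gamma(\tau-\tau_0)+\int_{\tau_0}^\tau J\big)$, and $\big|\int_{\tau_0}^\tau J\big|\le c_J/\eta$. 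This yields $c_m^{-1}e^{\gamma(\tau-\tau_0)}\le M'\le c_m e^{\gamma(\tau-\tau_0)}$ with $c_m=e^{c_J/\eta}$. Part (ii) follows by integrating (i) from $\tau_0$ to $\tau$; equivalently, divide \Cref{lemma-hyperbolic-m}(ii) by $t_0$.

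For (iii), I would use the scaling identity. Put $\varphi(x)=r^{-3/2}\Phi(x/r)$ with $\Phi=\partial_y(yF_s)$, where $r$ now denotes the original time $e^{\rho}$ corresponding to the self-similar start time. Then $\varphi=\partial_x(xf_s(\cdot,r))$, since $\partial_x(xf_s(x,r)) = r^{-3/2}(\partial_y(yF_s))(x/r)$. Changing variables $x=ty$ gives
\begin{equation*}
\int_0^\infty y\,\tilde T(\tau,\rho)\Phi\,\d y = t^{3/2}t^{-2}\int_0^\infty x\,T(t,r)\varphi\,\d x = t^{-1/2}\int_0^\infty x\,T(t,r)\partial_x(xf_s(x,r))\,\d x.
\end{equation*}
Multiplying by $2\beta/N_s^3$ and inserting \Cref{lemma-hyperbolic-m}(iii), the factor $t^{1/2}$ cancels and $(t/r)^{-\gamma}=e^{-\gamma(\tau-\rho)}$. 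We obtain the claimed formula with $\Delta_1(\rho)=\delta_1(e^{\rho})$, so $|\Delta_1|\le c(\gamma,\eta)c_J$.

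The one point needing care is that \Cref{lemma-hyperbolic-m}(iii) is applied from the intermediate time $r$ rather than $t_0$. So the hypothesis must hold relative to $r$: we need $|j(s)|\le c_j r^{\eta}s^{-1-\eta}$ for $s\ge r$. This follows from $t_0\le r$, since $c_J t_0^{\eta}s^{-1-\eta}\le c_J r^{\eta}s^{-1-\eta}$. I would also check that the derivative of $M$ is taken in its first argument, and that the statement's "$M'(t,t_0)$" is read as $M'(\tau,\tau_0)$. With that, the corollary follows.
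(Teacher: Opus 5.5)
Your proposal is correct and follows the paper's route: the paper states this corollary without proof as the self-similar form of \Cref{lemma-hyperbolic-m}, transported through the dictionary $J=tj$, $M=t_0^{-1}m$, $\tilde T=t^{3/2}T$ from \Cref{prop-hyperbolic_NG}, which is what you do. Your additional checks — that $|J|\le c_Je^{-\eta(\tau-\tau_0)}$ becomes $|j|\le c_Jt_0^{\eta}t^{-1-\eta}$ and remains valid from any later start time $r\ge t_0$, and the direct computation of (i) from \eqref{eqn-def-MJ} — supply the details the paper leaves implicit.
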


With this, we may rewrite (\ref{eqn-NG-duhamel-hyperbolic-selfsim}) to 
\begin{align*}
	N_G(\tau) =  - \frac{(1 - \gamma) }{\gamma} \int_{\tau_0}^{\tau}   \big( 1  -  (1 - 2\gamma)e^{-\gamma(\tau - r)} + \Delta_1(r) \big)\big( N_G(r) + H(r) \big)  \d r
	+ \int_{0}^{\infty} y \tilde{T}(\tau, \tau_0)G_0(y) \d y.
\end{align*}
Under suitable smallness assumptions on $\Delta_1, H$ and $\int_{0}^{\infty} y \tilde{T}(\tau, \tau_0)G_0(y) \d y$, this can be viewed as a perturbation of the integral equation
\begin{equation*}
	N(\tau) =  - \frac{(1 - \gamma) }{\gamma} \int_{\tau_0}^{\tau}   \big( 1  -  (1 - 2\gamma)e^{-\gamma(\tau - r)} \big)N(r)  \d r
\end{equation*}
The next lemma uses this idea to show that when $\Delta_1$ is small enough compared to $1$ and $H$ and the error term coming from $G_0$ are exponentially decaying, then $N_G$ is exponentially decaying as well:
\begin{lemma}\label{lemma-ODE-in-selfsim}
	Let $\tau_0 \geq 1$, $ \eta \in (0, 1 - \gamma)$ and let $\Delta_1, \Delta_2, H \in C([\tau_0, \infty))$ be such that 
	\begin{align}
		|\Delta_1(\tau)| & \leq \frac{\gamma^3(1 - \gamma - \eta)}{16}  \label{lemma-ODE-in-selfsim_c1} \\
		|\Delta_2(\tau)| &\leq c_2 e^{-\eta (\tau - \tau_0)} \nonumber \\
		|H(\tau)| &\leq c_{H}e^{-\eta (\tau - \tau_0)} \nonumber
	\end{align}
	for $c_H, c_2 > 0$, and let $N(\tau) \in C([\tau_0, \infty))$ satisfy
	\begin{equation}
		N(\tau) =  - \frac{(1 - \gamma) }{\gamma} \int_{\tau_0}^{\tau}   \big( 1 + \Delta_1(r) -  (1 - 2\gamma)e^{-\gamma(\tau - r)} \big)\big( N(r) + H(r) \big)  \d r
		+ \Delta_2(\tau).
	\end{equation}
	Then there exists $c(\gamma, \eta) > 0$ such that
	\begin{equation*}\label{ODE-lemma-exp-decay}
		|N(\tau)| \leq c(\gamma, \eta)(c_2 + c_H)e^{-\eta (\tau - \tau_0)}
	\end{equation*}
	for all $\tau \geq \tau_0.$
\end{lemma}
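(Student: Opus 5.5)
We rewrite the integral equation as a finite-dimensional linear ODE system driven by exponentially decaying forcing, and close with a Gronwall / variation-of-constants argument. Set $a\coloneqq\frac{1-\gamma}{\gamma}$ and $b\coloneqq 1-2\gamma$. Introduce
\begin{equation*}
Q(\tau)\coloneqq\int_{\tau_0}^{\tau}(N+H)\d r,\qquad P(\tau)\coloneqq\int_{\tau_0}^{\tau}e^{-\gamma(\tau-r)}(N+H)\d r,\qquad R(\tau)\coloneqq\int_{\tau_0}^{\tau}\Delta_1(N+H)\d r .
\end{equation*}
Then the hypothesis reads $N=-a(Q-bP+R)+\Delta_2$, with $Q(\tau_0)=P(\tau_0)=R(\tau_0)=0$. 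Moreover
\begin{equation*}
Q'=N+H,\qquad P'=N+H-\gamma P,\qquad R'=\Delta_1(N+H).
\end{equation*}
Substituting $N$, we obtain for $X=(Q,P,R)^T$ a system $X'=AX+\Delta_1(\tau)BX+F(\tau)$. The pieces are:
\begin{itemize}
\item $A$ is the constant matrix with rows $(-a,ab,-a)$, $(-a,ab-\gamma,-a)$, $(0,0,0)$;
\item $B$ is the constant matrix with rows $(0,0,0)$, $(0,0,0)$, $(-a,ab,-a)$;
\item $F=(H+\Delta_2,\,H+\Delta_2,\,\Delta_1(H+\Delta_2))^T$, so $|F|\le c(\gamma)(c_2+c_H)e^{-\eta(\tau-\tau_0)}$.
\end{itemize}

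The zero eigenvalue of $A$ in the $R$-direction is an artefact of this splitting. I will instead treat the whole Volterra term $-a\Delta_1(N+H)$ as perturbing the $2\times2$ system for $(Q,P)$ with $R$ kept as part of it. To avoid the zero mode cleanly, I plan to fold $R$ into $Q$ by defining $\tilde Q\coloneqq Q+R$. Then $\tilde Q'=(1+\Delta_1)(N+H)$ and $N=-a(\tilde Q-bP)+\Delta_2$. This gives a genuine $2\times 2$ system
\begin{equation*}
\begin{pmatrix}\tilde Q\\P\end{pmatrix}'=\big(A_0+\Delta_1 A_1\big)\begin{pmatrix}\tilde Q\\P\end{pmatrix}+\tilde F,\qquad A_0=\begin{pmatrix}-a&ab\\-a&ab-\gamma\end{pmatrix},\quad A_1=\begin{pmatrix}-a&ab\\0&0\end{pmatrix}.
\end{equation*}
The key computation is the spectrum of $A_0$. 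Its trace is $-a(1-b)-\gamma=-(2-\gamma)$ and its determinant is $a\gamma=1-\gamma$, so the eigenvalues are $-1$ and $-(1-\gamma)$. Since the eigenvalues are distinct, diagonalising gives
\begin{equation*}
|e^{A_0s}|\le C_0(\gamma)\,e^{-(1-\gamma)s},\qquad s\ge0,
\end{equation*}
with $C_0$ controlled by the condition number of the eigenvector matrix. The eigenvalue gap is $\gamma$, and $a=O(\gamma^{-1})$, which is the source of powers of $\gamma$ such as $\gamma^{-2}$ or $\gamma^{-3}$.

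Next I use variation of constants with $Y=(\tilde Q,P)$. Let $\kappa\coloneqq 1-\gamma$ and $\rho\coloneqq C_0|A_1|\sup|\Delta_1|$. Then
\begin{equation*}
|Y(\tau)|\le C_0\int_{\tau_0}^{\tau}e^{-\kappa(\tau-r)}\big(|\Delta_1(r)||A_1||Y(r)|+|\tilde F(r)|\big)\d r .
\end{equation*}
Multiplying by $e^{\kappa(\tau-\tau_0)}$ and applying Gronwall gives
\begin{equation*}
|Y(\tau)|\le C_0\int_{\tau_0}^{\tau}e^{-(\kappa-\rho)(\tau-r)}|\tilde F(r)|\d r .
\end{equation*}
Hypothesis \eqref{lemma-ODE-in-selfsim_c1} should be exactly what guarantees $\rho\le\frac{1}{2}(1-\gamma-\eta)$, so that $\kappa-\rho\ge\eta+\frac{1}{2}(1-\gamma-\eta)>\eta$. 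The forcing $\tilde F$ is bounded by $c(c_2+c_H)e^{-\eta(r-\tau_0)}$, and convolving it with $e^{-(\kappa-\rho)s}$ yields
\begin{equation*}
|Y(\tau)|\le \frac{c(\gamma)(c_2+c_H)}{1-\gamma-\eta}\,e^{-\eta(\tau-\tau_0)}.
\end{equation*}
Finally, $|N|\le a(1+b)|Y|+|\Delta_2|$ gives the claimed bound, with $c(\gamma,\eta)$ blowing up like $(1-\gamma-\eta)^{-1}$.

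The main obstacle is the quantitative bound on $C_0(\gamma)|A_1|$. It must match the constant $\gamma^3(1-\gamma-\eta)/16$ in \eqref{lemma-ODE-in-selfsim_c1}, which means tracking the eigenvector matrix of $A_0$ explicitly. The eigenvectors solve $(a-\lambda)\tilde Q=ab\,P$ for $\lambda\in\{-1,-(1-\gamma)\}$, and the inverse of the eigenvector matrix carries a factor $\gamma^{-1}$ from the eigenvalue gap. If the constants do not quite match, I would fall back on a weighted norm $|\tilde Q|+\lambda|P|$ adapted to $A_0$, or on a direct estimate of the scalar second-order ODE $N''+(2-\gamma)N'+(1-\gamma)N=\dots$ obtained by differentiating the integral equation twice. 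In the latter approach the Green's function $\frac{1}{\gamma}\big(e^{-(1-\gamma)s}-e^{-s}\big)$ is explicit and nonnegative, which makes the constant bookkeeping transparent.
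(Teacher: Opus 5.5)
Your proposal is correct and is essentially the paper's argument: your $\tilde Q$ and $P$ are exactly the paper's $X$ and $Y$, which gives the same $2\times 2$ system with eigenvalues $-1$ and $-(1-\gamma)$. The only difference is that you absorb the $\Delta_1 A_1$ term by (the sharp form of) Gronwall, whereas the paper treats $-\frac{1-\gamma}{\gamma}\Delta_1\bigl(X-(1-2\gamma)Y\bigr)$ as forcing in a continuation argument. The constants you flagged as the main obstacle do match. The eigenvector matrix with columns $(1-2\gamma,1-\gamma)^T$ and $(1-\gamma,1)^T$ has determinant $-\gamma^2$, so in the $\ell^1$ norm $C_0\le 4\gamma^{-2}$, and $\|A_1\|_{1\to 1}=a<\gamma^{-1}$; hence $\rho\le\frac{1}{4}(1-\gamma-\eta)$ under the stated bound on $\Delta_1$.
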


\begin{proof}
	Let $X(\tau) = \int_{\tau_0}^{\tau} (1 + \Delta_1(r))\big(N(r) + H(r) \big) \d r, Y(\tau) = e^{-\gamma \tau}\int_{\tau_0}^{\tau}  e^{\gamma r}\big(N(r) + H(r) \big) \d r.$ Then
	\begin{equation}\label{eqn-ode-lemma-xyprime}
	\begin{split}
		\begin{pmatrix}
			X' \\
			Y'
		\end{pmatrix}
		&= \begin{pmatrix}
			-\frac{(1-\gamma)}{\gamma} & \frac{(1-\gamma)(1 - 2\gamma)}{\gamma} \\
			-\frac{(1-\gamma)}{\gamma} & \frac{(1-\gamma)(1 - 2\gamma)}{\gamma} - \gamma
		\end{pmatrix}
		\begin{pmatrix}
			X \\
			Y
		\end{pmatrix} \\
		&\quad + 	\begin{pmatrix}
			(\Delta_2 + H) (1  + \Delta_1)   -\frac{(1-\gamma)}{\gamma}\Delta_1(X - (1 - 2\gamma)Y) \\
			\Delta_2 + H
		\end{pmatrix}.
	\end{split}
	\end{equation}
Since $X(\tau_0) = Y(\tau_0) = 0,$ it holds that $|X(\tau)|, |Y(\tau)| \leq c_X e^{-\eta(\tau - \tau_0)}$ on some time interval $[\tau_0, \tau^*]$ for
\begin{equation}\label{eqn-cx-bound}
	\frac{c_X(1 - \gamma - \eta)\gamma^2}{8}  \coloneqq 3(c_2  + c_H).
\end{equation}
Using this and (\ref{lemma-ODE-in-selfsim_c1}) we find
\begin{align*}
	&\quad \; \Big|(\Delta_2 + H) (1  + \Delta_1)   -\frac{(1-\gamma)}{\gamma}\Delta_1(X -(1 - 2\gamma)Y)\Big| (r) 
	+ \Big|\Delta_2 + H \Big|(r)
	\\
	&\leq 3\big(c_2e^{-\eta (r - \tau_0)} + c_He^{- \eta (r - \tau_0)} \big) + \frac{2}{\gamma} \frac{\gamma^3(1 - \gamma - \eta)}{16} c_X e^{-\eta(r - \tau_0)} \\ 
	&\leq \frac{c_X}{4} \gamma^2 (1 - \gamma - \eta) e^{-\eta (s - \tau_0)}.
\end{align*}

	The solutions of the homogeneous problem starting at time $s \leq \tau$ with datum $\begin{pmatrix}
		X_0 \\ Y_0
	\end{pmatrix}$
	are of the form
	\begin{equation*}
		\begin{pmatrix}
			X_{hom} \\ Y _{hom}
		\end{pmatrix}(\tau; s)
		= ae^{-(1-\gamma)(\tau - s)}
		\begin{pmatrix}
			1-2\gamma  \\
			1-\gamma
		\end{pmatrix}
		+ be^{-(\tau - s)}
		\begin{pmatrix}
			1 - \gamma \\
			1
		\end{pmatrix}
	\end{equation*}
	where $a = -\gamma^{-2}( X_0 + (1 - \gamma)Y_0)$ and $b = \gamma^{-2}( (1 - \gamma)X_0 -(1 - 2\gamma)Y_0).$ Thus using $\gamma \in \big(0, \frac{1}{2}\big)$ we obtain
	\begin{equation*}
		|X_{hom}(\tau, s)| \leq 2\gamma^{-2}\big(|X_0| + |Y_0|\big)e^{-(1-\gamma)(\tau - s)},
	\end{equation*}
	and the same for $|Y_{hom}|.$
	Hence Duhamel's formula for the solutions of the inhomogeneous problem with initial datum zero yields
	\begin{align*}
		|X(\tau, \tau_0)|
		&\leq 2\gamma^{-2} \int_{\tau_0}^{\tau} \frac{c_X}{4} \gamma^2(1 - \gamma - \eta)
		e^{-\eta (s - \tau_0)}e^{-(1-\gamma)(\tau - s)}
		\d s 
		= \frac{c_X}{2}e^{-\eta (\tau - \tau_0)},
	\end{align*}
	so that $|X|$ is indeed decaying exponentially with rate $\eta$ and a strictly smaller multiplicative constant than initially assumed. The same computation holds for $|Y|$, so in both cases $\tau^*$ can be extended to infinity and we obtain exponential decay for all times. 
	Using (\ref{eqn-cx-bound}) it follows for $N$ that
	\begin{align*}
		|N(\tau)| 
		&= \Big|- \frac{1 - \gamma}{\gamma} X(\tau) + \frac{(1 - \gamma)(1 - 2\gamma)}{\gamma} Y(\tau) + \Delta_2(\tau)\Big| \\
		&\leq \frac{c_X}{\gamma}e^{-\eta (\tau - \tau_0)} + c_2e^{-\eta (\tau - \tau_0)} 
		\leq  \frac{2c_X}{\gamma}e^{-\eta (\tau - \tau_0)}.
	\end{align*}
\end{proof}
We remark again that while $c_2$ and $c_H$ can be any positive constant, the bound for $\Delta_1$ depends on $\gamma$ and $\eta$ and is potentially very small. This is because $\Delta_1$ is multiplied with $X$ and $Y$ in the inhomogeneity (\ref{eqn-ode-lemma-xyprime}). On the other hand, it is not necessary to have exponential decay of $\Delta_1$ since $\Delta_2$ and $H$ are decaying exponentially by assumption, and $X$ and $Y$ by assumption of the continuation argument.

Finally, we identify the pointwise limit of $\tilde{T}(\tau, \tau_0)\Phi(y)$ as $\tau \to \infty$ when $J$ has exponential decay:
\begin{lemma}\label{lemma-F-to-Fs}
	Let $\tau_0, \eta > 0$ and let $J$ as in (\ref{eqn-def-J}) satisfy $|J(\tau)| \leq c_J e^{-\eta(\tau - \tau_0)}$ for $c_J, \eta > 0$ and $\tau \geq \tau_0$. Let $\Phi \in L^1((1+y)\d y)$ satisfy 
	$$\lim\limits_{y \to \infty} \Phi(y)y^{\theta} = c_\Phi \gamma^{-\theta}$$ for some $c_\Phi > 0$ and
	\begin{align}\label{eqn-F-to-Fs-assumption}
		\Big|\int_{0}^{\infty} y \tilde{T}(\tau, \tau_0)\Phi(y)  \d y - N_s \Big| 
		\leq c_N e^{-\eta (\tau - \tau_0)}
	\end{align}
	for some $c_N < N_s$. Then $\tilde{T}(\tau, \tau_0)\Phi(y) \to F_s(y)$ as $\tau \to \infty$ uniformly on compact sets in $[0, \infty)$.
\end{lemma}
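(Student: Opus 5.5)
The plan is to compute $\tilde{T}(\tau,\tau_0)\Phi$ explicitly through the representation (\ref{eqn-def-tildeTJ}) and use only the tail asymptotics of $\Phi$. The free multiplicative constant that appears in the limit is then identified with $c_s$ by means of the mass condition (\ref{eqn-F-to-Fs-assumption}).

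\textbf{Asymptotics of $M$ and $M'$.} From (\ref{eqn-def-MJ}) we have $M'(\tau,\tau_0)=\exp\big(\gamma(\tau-\tau_0)+\int_{\tau_0}^{\tau}J\big)$. Since $|J|\le c_J e^{-\eta(\tau-\tau_0)}$, the integral $\int_{\tau_0}^\tau J$ converges to a finite limit, so $A:=\exp(\int_{\tau_0}^\infty J)\in(0,\infty)$ exists. Hence
\[
M'(\tau)=A e^{\gamma(\tau-\tau_0)}\big(1+o(1)\big).
\]
Integrating, $M(\tau)\to\infty$ and $M(\tau)/M'(\tau)\to 1/\gamma$; the second limit follows by a l'H\^opital-type argument, or directly because $\int_{\tau_0}^\tau e^{\gamma(r-\tau_0)}\,o(1)\,\d r=o(e^{\gamma(\tau-\tau_0)})$.

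\textbf{Pointwise limit.} Fix a compact set $K\subset[0,\infty)$. For $y\in K$ the argument $z=M'y+M$ satisfies $z\ge M\to\infty$ uniformly. The tail hypothesis therefore gives
\[
\Phi(z)=c_\Phi(\gamma z)^{-\theta}\big(1+o(1)\big)
\]
uniformly on $K$. Substituting into (\ref{eqn-def-tildeTJ}),
\[
\tilde T\Phi(y)=c_\Phi\, e^{\frac12(\tau-\tau_0)}(M')^{1-\theta}\Big(\gamma y+\gamma\tfrac{M}{M'}\Big)^{-\theta}\big(1+o(1)\big).
\]
The relation $\gamma(\theta-1)=\frac12$ gives $e^{\frac12(\tau-\tau_0)}(M')^{1-\theta}\to A^{1-\theta}$. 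Together with $\gamma M/M'\to1$ this yields
\[
\tilde T\Phi(y)\to c_\Phi A^{1-\theta}(1+\gamma y)^{-\theta}
\]
uniformly on $K$.

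\textbf{Identifying the constant.} As in the proof of Lemma \ref{lemma-hyperbolic-m}(iii), the first moment is
\[
\int_0^\infty y\,\tilde T\Phi\,\d y=\frac{e^{\frac12(\tau-\tau_0)}}{M'}\int_M^\infty (z-M)\Phi(z)\,\d z .
\]
Since $M\to\infty$ and $\Phi(z)z^\theta\to c_\Phi\gamma^{-\theta}$, the integral equals
\[
c_\Phi\gamma^{-\theta}\frac{M^{2-\theta}}{(\theta-1)(\theta-2)}\big(1+o(1)\big).
\]
The convergence of the tail integral is justified because $\theta>2$ and the relative error is uniform on $[M,\infty)$. Using $M\sim M'/\gamma$ and the same exponent bookkeeping as before, the moment converges to
\[
\frac{c_\Phi A^{1-\theta}}{\gamma^{2}(\theta-1)(\theta-2)}=c_\Phi A^{1-\theta}\,\frac{N_s}{c_s},
\]
where the last equality is the key relation $c_s=N_s\gamma^2(\theta-1)(\theta-2)$. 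By (\ref{eqn-F-to-Fs-assumption}) this limit equals $N_s$, so $c_\Phi A^{1-\theta}=c_s$. Hence the pointwise limit is exactly $F_s$ from Proposition \ref{P.fs}(i).

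The main obstacle is the careful control of the $o(1)$ errors. Two things must be checked: that $M/M'\to1/\gamma$ with no leftover additive constant, which is harmless since it is divided by $M'\to\infty$; and that the tail integral asymptotics are legitimate, since the weight $z-M$ grows while the relative error in $\Phi$ is only $o(1)$. In both cases uniformity follows from $z\ge M\to\infty$. The hypothesis $c_N<N_s$ only guarantees that the limiting mass is positive and is not otherwise needed.
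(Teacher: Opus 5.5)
Your proof is correct, but it takes a different route from the paper's.

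\textbf{The paper's route.} The paper follows Carr--Penrose. It reads the moment identity $M'N_\Phi=e^{\frac12(\tau-\tau_0)}\nu(M)$, with $\nu(r)=\int_r^\infty(y-r)\Phi(y)\d y$, as a separable ODE for $M$ and integrates $1/\nu$. Using L'H\^opital together with the tail asymptotics of $\nu$, it obtains $e^{\frac12(\tau-\tau_0)}M^{1-\theta}\to N_s(\theta-2)\gamma^\theta/(2c_\Phi)$ and $M'/M\to\gamma$, from which the limit $c_s(1+\gamma y)^{-\theta}$ follows. There the decay of $J$ enters only through the two-sided bounds of Corollary~\ref{lemma-hyperbolic-M}. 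These bounds ensure $M\to\infty$ at rate $e^{\gamma(\tau-\tau_0)}$, so that the error coming from $N_\Phi-N_s$ is negligible. The exact growth of $M$ is dictated by the mass constraint.

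\textbf{Your route.} You use instead that $\int_{\tau_0}^\infty J$ converges, so $M'\sim Ae^{\gamma(\tau-\tau_0)}$ with an unknown $A>0$. The profile $c_\Phi A^{1-\theta}(1+\gamma y)^{-\theta}$ then follows by direct substitution. The moment condition is used once, in the limit, only to fix $c_\Phi A^{1-\theta}=c_s$.

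\textbf{What each buys.} Your version is more elementary and slightly sharper. It needs only $N_\Phi\to N_s$, with no rate and no positivity of $N_\Phi$, so $c_N<N_s$ is indeed superfluous. It also gives uniformity on compact sets directly from $M'y+M\ge M\to\infty$. The paper instead invokes Dini's theorem, which would require a monotonicity in $\tau$ that is not established there. In turn, the paper's version would still work with only rough two-sided growth bounds on $M$ and no integrability of $J$. That is the natural setting when the mean field is determined implicitly by a moment constraint.
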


\begin{proof}
	We adopt the strategy of \cite{Carr_asymptotic_1998}, who treat a similar transport equation with a constant first moment. We replace this by assuming the first moment converges exponentially to $N_s$, which is enough to obtain the same pointwise limit.
	
	By abuse of notation we denote $N_\Phi(\tau) = \int_{0}^{\infty}\tilde{T}(\tau, \tau_0)\Phi(y) y \d y$ and observe that $N_\Phi > 0$ by (\ref{eqn-F-to-Fs-assumption}). Integrating (\ref{eqn-def-tildeTJ}) gives
	\begin{equation}\label{eqn-carr-penrose-type}
		N_\Phi(\tau) = \frac{1}{M'(\tau)}e^{\frac{1}{2}(\tau - \tau_0)} \int_{M(\tau)}^{\infty} (y - M(\tau)) \Phi_0(y) \d y.
	\end{equation}
	We denote
	\begin{equation*}
		\nu(r) \coloneqq \int_{r}^{\infty} (y - r)\Phi(y) \d y
	\end{equation*}
	and observe that 
	\begin{equation*}
		\lim\limits_{r \to \infty} r^{\theta - 2}\nu (r)
		= c_\Phi \frac{\gamma^{-\theta}}{(\theta - 1)(\theta - 2)}.
	\end{equation*} 
	
	Hence (\ref{eqn-carr-penrose-type}) can be rewritten to
	$M'(\tau) N_\Phi(\tau) = e^{\frac{1}{2}(\tau - \tau_0)} \nu(M(\tau))$, which yields
	\begin{align*}
		\int_{\tau_0}^{M(\tau)} \frac{1}{\nu(r)} \d r 
		&= \int_{\tau_0}^{\tau} \frac{e^{\frac{1}{2} (r - \tau_0)}}{N_\Phi(r)} \d r \\
		&= \int_{\tau_0}^{\tau} \frac{e^{\frac{1}{2} (r - \tau_0)}}{N_s} \d r 
		- \int_{\tau_0}^{\tau} e^{\frac{1}{2} (r - \tau_0)} \frac{N_\Phi - N_s}{N_s N_\Phi} \d r \\
		&= \frac{2}{N_s} \big(e^{\frac{1}{2}(\tau - \tau_0)} - 1\big)
		- \int_{\tau_0}^{\tau} e^{\frac{1}{2} (r - \tau_0)} \frac{N_\Phi - N_s}{N_s N_\Phi} \d r.
	\end{align*}
	with
	\begin{equation*}
		\Big| \int_{\tau_0}^{\tau} e^{\frac{1}{2} (r - \tau_0)} \frac{N_\Phi - N_s}{N_s N_\Phi} \d r \Big|
		\leq 
		\left\{
		\begin{split}
			&c(\gamma, c_N, \eta)e^{(\frac{1}{2} - \eta)(\tau - \tau_0)}, \quad &\eta \neq \frac{1}{2}, \\
			&c(\gamma, c_N)(\tau - \tau_0), &\eta = \frac{1}{2}
		\end{split}
		\right.
	\end{equation*}
	as $\tau \to \infty$. 
	\Cref{lemma-hyperbolic-M} implies $M(\tau) \to \infty$ for $\tau \to \infty$ and more specifically 
	$$\liminf\limits_{\tau \to \infty} e^{-\gamma (\tau - \tau_0)}M(\tau) > 0.$$
	Because of this, using L'Hôpital's rule and the fact that $\gamma(\theta - 1) = \frac{1}{2}$ it holds that
	\begin{align*}
		\lim\limits_{\tau \to \infty} \frac{e^{\frac{1}{2} (\tau - \tau_0)}}{M(\tau)^{\theta - 1}}
		&= \lim\limits_{\tau \to \infty} \frac{1}{M(\tau)^{\theta - 1}}
		\Big(
		\frac{N_s}{2} \int_{\tau_0}^{M(\tau)} \frac{1}{\nu(r)} \d r 
		+ \frac{N_s}{2} \int_{\tau_0}^{\tau} e^{\frac{1}{2} (r - \tau_0)} \frac{N_\Phi - N_s}{N_s N_\Phi} \d r
		+ 1
		\Big) \\
		&= \lim\limits_{\tau \to \infty} \frac{N_s}{2}
		\frac{1}{M(\tau)^{\theta - 1}}
		\int_{\tau_0}^{M(\tau)} \frac{1}{\nu(r)} \d r 
		= \lim\limits_{\tau \to \infty} \frac{N_s}{2} \frac{1}{(\theta - 1) \nu(M(\tau)) M(\tau)^{\theta - 2}} \\
		&= \frac{N_s(\theta - 2)\gamma^{\theta}}{2 c_\Phi} .
	\end{align*}
	Next, we observe that
	\begin{align*}
		\lim\limits_{\tau \to \infty} \frac{M'(\tau)}{M(\tau)}
		&= \lim\limits_{\tau \to \infty} \frac{e^{\frac{1}{2}(\tau - \tau_0)}}{M(\tau)^{\theta - 1}} \frac{ 1}{N_\Phi(\tau)} \nu(M(\tau)) M(\tau)^{\theta - 2}
		= \frac{N_s(\theta - 2)\gamma^{\theta}}{2 c_\Phi} \frac{c_\Phi \frac{\gamma^{-\theta}}{(\theta - 1)(\theta - 2)}}{N_s} 
		= \frac{1}{2(\theta - 1)} = \gamma
	\end{align*}
	and therefore $\lim\limits_{\tau \to \infty} \frac{M'(\tau)y + M(\tau)}{M(\tau)} = 1 + \gamma y$ for all $y \geq 0$. 
	All together, using $c_s = \frac{1 - 2\gamma}{4}N_s = \frac{(\theta - 2)\gamma}{2} N_s$ we obtain for all $y \geq 0$
	\begin{align*}
		&\quad \lim\limits_{\tau \to \infty} \tilde{T}(\tau, \tau_0)\Phi(y) \\
		&= \lim\limits_{\tau \to \infty} e^{\frac{1}{2}(\tau - \tau_0)}M'(\tau)\Phi(M'(\tau)y + M(\tau)) \\
		&= \lim\limits_{\tau \to \infty}
		\frac{e^{\frac{1}{2}(\tau - \tau_0)}}{M(\tau)^{\theta - 1}}
		\Big( \frac{M'(\tau)y + M(\tau)}{M(\tau)} \Big)^{-\theta}
		\frac{M'(\tau)}{M(\tau)}
		\big( M'(\tau)y + M(\tau) \big)^{\theta} \Phi(M'(\tau) y + M(\tau)) \\
		&= \frac{N_s(\theta - 2)\gamma^{\theta}}{2 c_\Phi}
		\big( 1 + \gamma y \big)^{-\theta} \gamma c_\Phi \gamma^{-\theta}
		= c_s\big( 1 + \gamma y \big)^{-\theta} = F_s(y),
	\end{align*}
	and thus by Dini's theorem uniform convergence on compact sets. 
\end{proof}

\subsection{Proof of \Cref{thm-main-hyperbolic}}
Combining the previous results now allows us to prove \Cref{thm-main-hyperbolic}.

\begin{proof}
	$N_G$ satisfies (\ref{eqn-NG-duhamel-hyperbolic-selfsim}) with $J, H$ and $\tilde{T}$ as defined in $(\ref{eqn-def-J}), (\ref{eqn-def-H})$ and $(\ref{eqn-def-tildeTJ})$. We will use this integral equation to deduce exponential decay of $N_G(\tau)$ from the smallness assumption $(\ref{thm-main-hyperbolic-ass-G0})$. Indeed by $(\ref{thm-main-hyperbolic-ass-G0})$ it holds
	\begin{equation*}
		|N_G(\tau_0)| \leq c_0 \gamma^{-2} \Big( \frac{1}{\theta + \varepsilon - 2} - \frac{1}{\theta + \varepsilon - 1}  \Big) + 4e^{-\varepsilon \gamma \tau_0}
		\leq \frac{c_0}{\gamma^2(\theta - 2)( \theta - 1)} + 4e^{-\varepsilon \gamma \tau_0}
	\end{equation*}
	and therefore
	\begin{equation}\label{eqn-hyperbolic-ng-assumption}
		|N_G(\tau_0)| \leq c_N e^{-\varepsilon \gamma (\tau - \tau_0)}
	\end{equation}
	on a maximal interval $[\tau_0, \tau^*)$ for
	\begin{equation}\label{eqn-hyperbolic-cg-lowerbound}
		c_N \geq  \frac{2c_0}{\gamma^2(\theta - 2)( \theta - 1)} + 8e^{-\varepsilon \gamma \tau_0}.
	\end{equation}
	 We will show that under some additional assumptions on $c_N$, this implies $|N_G(\tau_0)| \leq \frac{c_N}{2} e^{-\varepsilon \gamma (\tau - \tau_0)}$ for all $\tau \in [\tau_0, \tau^*)$, so that $(\ref{eqn-hyperbolic-ng-assumption})$ can be extended to $\tau \in [\tau_0, \infty)$. 

	 \textbf{Step 1:} Smallness of $J$
	 
	 We assume $c_0$ is small and $\tau_0$ large enough so that we may take $c_N$ small enough such that
	 \begin{equation}\label{eqn-hyperbolic-cg-upperbound}
	 	c_N \leq \frac{N_s}{4}
	 \end{equation}
	 without violating (\ref{eqn-hyperbolic-cg-lowerbound}).
	 This implies that $N_s^2 + 2N_s N_G  \geq \frac{N_s^2}{2}$, 
	 so that
	 $$N_s^4 \big(1 + \frac{e^{-\tau} + 2N_sN_G + N_G^2}{N_s^2} \big)
	 \geq N_s^2 \big(N_s^2 + 2N_G N_s\big)
	 = \frac{N_s^4}{2}$$ 
	 and therefore using $\varepsilon \gamma \in (0, 1]$ and (\ref{eqn-hyperbolic-cg-upperbound})
	 \begin{align}\label{eqn-hyperbolic-H-small}
	 	|H(N_G, \tau)| &\leq c(\gamma)\big(e^{-\tau} + c_N^2 e^{-2\varepsilon \gamma (\tau - \tau_0)} + c_N e^{-\varepsilon \gamma (\tau - \tau_0)}\big(e^{-\tau} + c_N e^{-\varepsilon \gamma (\tau - \tau_0)} + c_N^2 e^{-2\varepsilon \gamma (\tau - \tau_0)} \big) \big) \nonumber \\
	 	&\leq c(\gamma)(e^{-\varepsilon \gamma \tau_0} + c_N^2)e^{-\varepsilon \gamma (\tau - \tau_0)}
	 \end{align}
	 and
	 \begin{align}\label{eqn-hyperbolic-J-small}
	 	\big|J(\tau) \big|
	 	=
	 	\big|\frac{2\beta N_G(\tau) }{N_s^3}
	 	+ \beta H(N_G, \tau)
	 	\big| 
	 	\leq c(\gamma)(e^{-\varepsilon \gamma \tau_0} + c_N + c_N^2)e^{-\varepsilon \gamma (\tau - \tau_0)}.
	 \end{align}
	 
	 \textbf{Step 2}: Estimation of the terms in (\ref{eqn-NG-duhamel-hyperbolic-selfsim}).
	 
	 This bound on $J$ allows us to apply \Cref{lemma-hyperbolic-M} with $\eta = \varepsilon \gamma \in (0, 1 - \gamma) \subset (0, 1)$ to estimate the terms in $(\ref{eqn-NG-duhamel-hyperbolic-selfsim}).$ 
	 Firstly, we can bound
	 \begin{align*}
	 	\Big| \int_{0}^{\infty} x\tilde{T}(\tau,\tau_0)G_0 \d r\Big|
	 	&= \Big| \frac{e^{\frac{1}{2} (\tau - \tau_0)}}{M'(\tau, \tau_0)} \int_{M(\tau, \tau_0)}^{\infty} (y - M(\tau, \tau_0))G_0(y) \d y \Big| \\
	 	&\leq c_0 c(\gamma, \varepsilon)  e^{( \frac{1}{2}- \gamma)(\tau - \tau_0)} 
	 	\int_{M(\tau, \tau_0)}^{\infty} (y - M(\tau, \tau_0))\big(1 + \gamma y \big)^{-(\theta +\varepsilon)} \d y
	 	\\
	 	&= c_0 c(\gamma, \varepsilon) e^{( \frac{1}{2}- \gamma)(\tau - \tau_0)}  
	 	\Big(-
	 	\frac{1}{\theta + \varepsilon - 1}(1 + \gamma M(\tau, \tau_0))^{ -(\theta + \varepsilon -1)}(1 + \gamma M(\tau, \tau_0))  \\
	 	&\hspace*{4cm} + \frac{1}{ \theta + \varepsilon -2}(1 + \gamma M(\tau, \tau_0))^{-(\theta + \varepsilon -2)}
	 	\Big)
	 	\\
	 	&\leq c_0 c(\gamma,  \varepsilon)e^{( \frac{1}{2}- \gamma)(\tau - \tau_0)}  
	 	(1 +  \gamma M(\tau, \tau_0))^{-(\theta + \varepsilon -2)}
	 	\\
	 	&\leq c_0 c(\gamma, \varepsilon) e^{(\frac{1}{2}- \gamma + \gamma(2 - \theta - \varepsilon))(\tau - \tau_0)}  
	 	= c_0 c(\gamma,  \varepsilon) e^{- \varepsilon \gamma(\tau - \tau_0)}. 
	 \end{align*}
	Similarly, it holds
	\begin{align*}
		\Big| \int_{0}^{\infty} x\tilde{T}(\tau,\tau_0)\hat{G}_0 \d r \Big|
		&\leq \frac{e^{\frac{1}{2} (\tau - \tau_0)}}{M'(\tau, \tau_0)} e^{(2 - \varepsilon \gamma)\tau_0} 
		\Big|  \int_{M(\tau, \tau_0)}^{\infty} (y - M(\tau, \tau_0))e^{ \frac{ye^{\tau}}{2} } \d y \Big| \\
		&\leq c(\gamma) \frac{e^{\frac{1}{2} (\tau - \tau_0)}}{M'(\tau, \tau_0)} e^{(2 - \varepsilon \gamma)\tau_0} e^{-2\tau - \frac{M(\tau, \tau_0)e^{\tau}}{2}} 
		\leq c(\gamma)e^{-(\frac{3}{2} + \gamma )(\tau - \tau_0)} e^{-\varepsilon \gamma \tau_0}.
	\end{align*} 

Then with \Cref{lemma-hyperbolic-M}(iii) we can rewrite (\ref{eqn-NG-duhamel-hyperbolic-selfsim}) to
\begin{align*}
 N_G(\tau) 
	&= - \frac{1 - \gamma}{\gamma} \int_{\tau_0}^{\tau} 
	\Big( 
	N_G(r)+ \frac{N_s^3}{2\beta}H(r)
	\Big) 
	\Big(
	1 
	- (1 - 2 \gamma) \Big(\frac{t}{r} \Big)^{-\gamma}
	+ \Delta_1(r)
	\Big)
	\d r
	+ \Delta_2(t),
\end{align*}

where by $(\ref{eqn-hyperbolic-H-small}), (\ref{eqn-hyperbolic-J-small})$, \Cref{lemma-hyperbolic-M}$(iii)$ and the previous computation we have
\begin{align*}
	\Big|\frac{N_s^3}{2\beta}H(N_G, r)\Big| &\leq c(\gamma)(e^{-\varepsilon \gamma \tau_0} + c_N^2)e^{-\varepsilon \gamma (\tau - \tau_0)}
	 \\
	|\Delta_1(t)| &\leq c(\gamma, \varepsilon)(e^{-\tau_0} + c_N + c_N^2) \\
	|\Delta_2(t)| &=  	\Big| \int_{0}^{\infty} x\tilde{T}(\tau,\tau_0)(G_0 + \hat{G}_0) \d r \Big| 
	\leq 
	c(\gamma,  \varepsilon)(c_0 + e^{-\varepsilon \gamma \tau_0}) e^{- \varepsilon \gamma(\tau - \tau_0)}.
\end{align*}

\textbf{Step 3: } Conclusion

When $c_0$ is small and $\tau_0$ large enough, we can take $c_N$ sufficiently small that the bound $(\ref{lemma-ODE-in-selfsim_c1})$ on $\Delta_1$ holds, so that we can apply \Cref{lemma-ODE-in-selfsim}. We obtain
\begin{align}\label{eqn-hyperbolic_NG-conclusion}
 	| N_G(\tau)|
	&\leq c(\gamma, \varepsilon)\big(e^{- \varepsilon \gamma \tau_0} + c_N^2 + c_0  \big)e^{-  \varepsilon \gamma (\tau - \tau_0)}.
\end{align}
We assume $c_0$ is small and $\tau_0$ large enough such that
$c(\gamma, \varepsilon)c_N^2 \leq \frac{c_N}{4}$
without violating (\ref{eqn-hyperbolic-cg-lowerbound}), and that $\tau_0$ is large enough such that
\begin{equation}\label{eqn-hyperbolic-cg-lowerbound2}
	c(\gamma,\varepsilon) \big( c_0 + e^{-\varepsilon \gamma \tau_0} \big) \leq \frac{c_N}{4}
\end{equation}
without violating (\ref{eqn-hyperbolic-cg-upperbound}) (note this is, up to constants, the same assumption as (\ref{eqn-hyperbolic-cg-lowerbound})).
Then (\ref{eqn-hyperbolic_NG-conclusion}) reads
$N_G(\tau) \leq \frac{c_N}{2} e^{-  \varepsilon \gamma (\tau - \tau_0)},$
so that the assumption (\ref{eqn-hyperbolic-ng-assumption}) can be extended from $[\tau_0, \tau^*)$ to $[\tau_0, \infty)$. The optimal choice of $c_N$ is determined by the lower bounds (\ref{eqn-hyperbolic-cg-lowerbound}) and (\ref{eqn-hyperbolic-cg-lowerbound2}).

Having shown $|N_G(\tau)| = |N_F(\tau) - N_s| \leq c_Ne^{-  \varepsilon \gamma (\tau - \tau_0)}$, $c_N < N_s$ for all $\tau \geq \tau_0$, we can now apply \Cref{lemma-F-to-Fs} with $\Phi = F_0 = F_s + G_0$ satisfying $\lim\limits_{y \to \infty} y^\theta F_0(y) = c_s\gamma^{-\theta}$ and conclude that
$$F(y, \tau) = \tilde{T}(\tau, \tau_0)F_0(y) \to F_s(y)$$
as $\tau \to \infty$ for all $y \geq 0$. 

\end{proof}

	\section{The parabolic problem}
	
	We now consider the full problem (\ref{eqn-main-parabolic}) including the second order term. 
	As in the hyperbolic case, we want to write $\frac{1}{1 + n_f^2}$ as a perturbation of $\frac{1}{n_s^2}$. 
	In this way, in Section \ref{sec-para-derivation-ng} we will obtain an integral equation for $N_F - N_s$ in terms of the solution of a suitable homogeneous parabolic problem, namely the parabolic variant of (\ref{eqn-pde-for-T}) with boundary value $0$, since for this operator the solution is linear in the initial data. Unlike the hyperbolic case, the parabolic problem cannot be solved by characteristics. Instead, in Section \ref{sec-parabolic-approxlemma} we show via maximum principle that the solutions of the homogeneous parabolic and hyperbolic problem are comparable when the initial data are sufficiently small. To use the maximum principle, up to this step we work in the original variables $x$ and $t$. 
	
	The statement of Theorem \ref{thm-main-parabolic} is in terms of $G = F - F_s$, however, we will work in the proof with 
	\begin{align*}
		G(y,\tau) &= F - F_s - \mu e^{\frac{3}{2}\tau - ye^{\tau}} - c_s e^{-ye^{\tau}}, \\
		g(x,t) &= f - f_s - \mu e^{-x} - \frac{c_s}{t^{\frac{3}{2}}}e^{-x} 
		= t^{-\frac{3}{2}}G(y, \tau). 
	\end{align*}
	This is to remove the boundary layer at $x = 0$ resp. $y = 0$, i.e. to have $g(0, t) = G(0, \tau) = 0$ for all $t \geq t_0$ resp. $\tau \geq \tau_0$. We remark that the assumptions on $G_0, \hat{G}_0$ in \Cref{thm-main-parabolic} carry over to this definition of $G$. 
	We denote
	$$f_{bl}(x,t) = \mu e^{-x} + \frac{c_s}{t^{\frac{3}{2}}}e^{-x} = t^{-\frac{3}{2}}F_{bl}(y, \tau), \quad
	n_{bl}(t) = \mu - \frac{c_s}{t^{\frac{3}{2}}},$$
	assume $ \bar{t}_0 > \Big(\frac{c_s}{\mu} \Big)^{\frac{2}{3}}$ and observe that then $n_{bl}(t) > 0$ for all $t \geq \bar{t}_0.$
	
	\subsection{Derivation of the integral equation for $n_g$}\label{sec-para-derivation-ng}
	
	The following variant of \Cref{prop-ng} holds:
	\begin{proposition}\label{prop-ng-parabolic}
		Let $t_0 > 1$ be large enough such that $n_{bl}(t) \geq 0$ for all $t \geq t_0$. Let \linebreak $g(x,t_0) = g_0(x) + \hat{g}_0(x).$
		Define 
		\begin{align}
			\label{eqn-def-h-para}
			h(n_g, t) &\coloneqq \frac{1 + n_g^2 + n_{bl}^2 +  2n_{bl} (n_g+n_s) 
				- 2n_g n_s^{-1}\big(1 + n_f^2 - n_{s}^2  \big)
			}
			{n_s^4 \big(1 + \frac{1 + n_f^2 - n_{s}^2 }{n_s^2}\big)}, \\
			\label{eqn-def-j-para}
			j(t) &\coloneqq \frac{2\beta n_g }{n_s^3}
			+ \beta h(n_g, t).
		\end{align}
		Let $S(t, t_0)\varphi_0$ be the solution mapping of
			\begin{equation}\label{approxlemma-parabolic-eqn}
			\left\{ 
			\begin{split}
				\partial_t \varphi &= \partial_x \Big( \partial_x \varphi + \Big(1  +\beta x\big( -\frac{1}{n_s^2} + j \big) \Big)\varphi   \Big), \quad x > 0 \\
				\varphi(0, t) &= 0 \\
				\varphi(x, 0) &= \varphi_0.
			\end{split}
			\right.
		\end{equation}  
		Then
		\begin{equation}\label{eqn-g-duhamel-parabolic}
			\begin{split}
				g(x, t) &= S(t, t_0)(g_0 + \hat{g}_0)
				+ \int_{t_0}^{t} 
				\beta \Big(\frac{2n_g(r) }{n_s^3(r)}
				+ h(n_g(r), r)\Big)  S(t, r)  \partial_x(xf_s(x,r)) 
				\d r \\
				&\quad+ \int_{t_0}^{t} S(t,r)\big(\mathcal{R}_{bl}(x,r) + \partial_x^2 f_s(x,r) \big) \d r
			\end{split}
		\end{equation}
		and
		\begin{equation}\label{eqn-ng-duhamel-parabolic}
			\begin{split}
				n_g(t) &= \int_{0}^{\infty} x S(t, t_0)(g_0 + \hat{g}_0) \d x \\
				&\quad+ \int_{t_0}^{t} 
				\beta\Big(\frac{2n_g(r) }{n_s^3(r)}
				+ h(n_g(r), r)\Big) \int_{0}^{\infty} x S(t, r)  \partial_x(xf_s(x,r)) \d x
				\d r \\
				&\quad+ \int_{t_0}^{t} \int_{0}^{\infty} x S(t,r)\big( \mathcal{R}_{bl}(x,r) + \partial_x^2 f_s(x,r) \big) \d x \d r
			\end{split}
		\end{equation}
		for
		\begin{align}\label{eqn-def-Rbl}
			\mathcal{R}_{bl}(x,t) 
			&= \beta \big(\frac{1}{n_s^2} - j \big)\big(\mu + \frac{c_s}{t^{\frac{3}{2}}} \big)\partial_x(xe^{-x}) - \frac{3}{2}c_st^{-\frac{5}{2}}e^{-x}.
		\end{align}
	\end{proposition}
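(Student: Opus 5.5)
The plan is to repeat the proof of \Cref{prop-ng}, now keeping track of the diffusion term and of the boundary-layer profile $f_{bl}$. We write $f = f_s + f_{bl} + g$ and define $n_{bl}$ as $\int_0^\infty x f_{bl}\d x$. Then $n_f = n_s + n_{bl} + n_g$, and hence
\begin{equation*}
1 + n_f^2 - n_s^2 = 1 + n_g^2 + n_{bl}^2 + 2n_{bl}(n_g + n_s) + 2n_g n_s .
\end{equation*}
The first step is the algebraic identity
\begin{equation*}
\frac{1}{1+n_f^2} = \frac{1}{n_s^2} - \frac{1+n_f^2-n_s^2}{n_s^4\big(1 + \frac{1+n_f^2-n_s^2}{n_s^2}\big)} = \frac{1}{n_s^2} - \frac{2n_g}{n_s^3} - h(n_g,t),
\end{equation*}
with $h$ as in (\ref{eqn-def-h-para}). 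We obtain it exactly as in \Cref{prop-ng}: split off the term $2n_gn_s$ in the numerator and collect the remainder over the common denominator. This is pure bookkeeping, and it shows that the drift coefficient in (\ref{eqn-main-parabolic}) equals $1 + \beta x(-\frac{1}{n_s^2} + j)$ with $j$ from (\ref{eqn-def-j-para}).

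Next we insert the decomposition into (\ref{eqn-main-parabolic}) and subtract the equations satisfied by the two known pieces.

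\textbf{The piece $f_s$.} It solves only the transport equation $\partial_t f_s = \partial_x((1-\frac{\beta x}{n_s^2})f_s)$. In the parabolic equation it therefore leaves the residual source $\partial_x^2 f_s$. The drift perturbation acting on it gives $\partial_x\big(\beta(\frac{2n_g}{n_s^3}+h)xf_s\big)$, which is the same source term as in (\ref{eqn-diffeq-g-hyperbolic}).

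\textbf{The piece $f_{bl}$.} Since it is proportional to $e^{-x}$, we have $\partial_x(\partial_x f_{bl} + f_{bl}) = 0$. What survives is the time derivative $\partial_t f_{bl} = -\frac32 c_s t^{-5/2}e^{-x}$ and the linear drift term $\beta(\frac{1}{n_s^2} - j)(\mu + c_st^{-3/2})\partial_x(xe^{-x})$, up to sign. Together these give $\mathcal{R}_{bl}$ in (\ref{eqn-def-Rbl}).

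The upshot is that $g$ solves the linear problem (\ref{approxlemma-parabolic-eqn}), with $j$ regarded as a given function of $t$, plus the three inhomogeneities above.

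We then check the boundary condition. Since $F_s(0) = c_s$, we have $f_s(0,t) = c_st^{-3/2}$, and $f_{bl}$ is built precisely so that $f_s + f_{bl}$ takes the value $\mu$ at $x=0$. Hence $g(0,t) = 0$. This is the point of subtracting $f_{bl}$: the solution operator $S(t,r)$ with zero Dirichlet data is then linear. Duhamel's principle gives (\ref{eqn-g-duhamel-parabolic}) directly, since $S(t,r)$ is the propagator of the homogeneous linear problem and all source terms vanish at $x=0$ or are handled by the zero boundary data. Multiplying by $x$ and integrating over $(0,\infty)$ gives (\ref{eqn-ng-duhamel-parabolic}).

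I expect the main obstacle to be bookkeeping rather than analysis. The signs and the exact form of $n_{bl}$ and of $\mathcal{R}_{bl}$ must be tracked carefully: the paper writes $n_{bl} = \mu - c_st^{-3/2}$, so I would recompute $\int x f_{bl}\d x$ and the boundary value $f_{bl}(0,t)$ and adjust the sign in front of $c_st^{-3/2}$ consistently so that $g(0,t)=0$. The analytic points to justify are two. First, the integrals $\int x S(t,r)(\cdot)\d x$ must converge, which needs decay faster than $x^{-2}$. This holds because $f_s \sim x^{-\theta}$ with $\theta > 2$ and $\partial_x^2 f_s \sim x^{-\theta-2}$, while the $f_{bl}$ terms decay exponentially. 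Second, the Duhamel representation must be valid for the Dirichlet problem. For this I would use the standard well-posedness and positivity of the linear parabolic problem with the bounded, continuous coefficient $j(t)$ on $[t_0,\infty)$.
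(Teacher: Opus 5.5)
Your proposal is correct and follows the paper's proof: you decompose $f=f_s+f_{bl}+g$ so that $n_f=n_s+n_{bl}+n_g$, verify the algebraic identity $\frac{1}{1+n_f^2}=\frac{1}{n_s^2}-\frac{2n_g}{n_s^3}-h$, read off the linear equation for $g$ with sources $\partial_x^2 f_s$, $\beta\big(\frac{2n_g}{n_s^3}+h\big)\partial_x(xf_s)$ and the boundary-layer remainder, and conclude by Duhamel with the zero-Dirichlet propagator $S$ and integration against $x$. Your caution about signs is warranted: $g(0,t)=0$ requires $f_{bl}=(\mu-c_st^{-3/2})e^{-x}$, which agrees with $n_{bl}=\mu-c_st^{-3/2}$ but not with the paper's displayed $f_{bl}$, and this changes the drift part of $\mathcal{R}_{bl}$ to $-\beta(\frac{1}{n_s^2}-j)(\mu-c_st^{-3/2})\partial_x(xe^{-x})$, which is harmless because only the bound $|\mathcal{R}_{bl}|\le c\,r^{-1}e^{-x/2}$ is used later.
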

	
	\begin{proof}
		We have $n_f = n_s + n_g + n_{bl}$, so that
		\begin{align*}
			\frac{1}{n_s^2} - \frac{1}{ n_f^2}
			&= \frac{1 +  n_f^2 - n_s^2}{n_s^2(1 +  n_f^2)}
			= \frac{1 + n_g^2 + n_{bl}^2 + 2n_g n_s + 2n_{bl} n_g  + 2n_{bl} n_s }{n_s^4 \Big(1 + \frac{1 + n_f^2 - n_{s}^2 }{n_s^2}\Big)} \\
			&= \frac{2n_g}{n_s^3} + h(n_g, t)
		\end{align*}
		with $h$ defined as above. One easily checks that
		\begin{equation}\label{eqn-diffeq-g-parabolic}
			\left\{
			\begin{split}
				\partial_t g 
				&= \partial_x^2 g + \Big(1 + \beta x \Big(-\frac{1}{n_s^2} + j(t) \Big)  \Big)\partial_x g
				+ \beta \Big(-\frac{1}{n_s^2} + j(t) \Big)g \\
				&\quad + \beta j(t)
				\partial_x^2 f_s + \partial_x(xf_s)
				+ \mathcal{R}_{bl}(x,t), \\
				g(0, t) &= 0, \\
				g(x, t_0) &= g_0(x).
			\end{split}
			\right.
		\end{equation}
		This yields (\ref{eqn-g-duhamel-parabolic}) by Duhamel's formula and (\ref{eqn-ng-duhamel-parabolic}) by integration. 
	\end{proof}

	\subsection{Approximation lemma}\label{sec-parabolic-approxlemma}
	
	To understand (\ref{eqn-ng-duhamel-parabolic}) we must understand the operator $S(t,t_0)$. The following approximation lemma relates $S(t,t_0)$ to $T(t,t_0),$ the solution mapping of the hyperbolic problem.
	We work in the original variables $(x,t)$ so that we can use the maximum principle.

	\begin{lemma}\label{approx-lemma}
		Let $\lambda \in \big[\frac{3}{2}, \frac{3}{2} + 2\gamma\big)$ and $j$ be as in (\ref{eqn-def-j-para}). Then there exist $\bar{t}_0 \geq 1, c_{1}, c_{2} > 0$ such that if
		\begin{equation}\label{approx-lemma-max-prpl-cond}
			| j(t) | \leq c_j t_0^\eta t^{-1 - \eta}, 
			\quad c_j \leq
			\min \Big\{\frac{1}{2N_s^2},  \frac{\frac{3}{2} + 2\gamma - \lambda }{8\beta (\theta + 2)} , \frac{\gamma}{5\theta} \Big\},
		\end{equation}
		for $c_j, \eta > 0$ and $t \geq t_0 \geq \bar{t}_0$, and $\varphi_0 \in L^1$ satisfies
		\begin{equation}\label{eqn-approxlemma-varphi-bound-f_s}
			|\varphi_0(x)| \leq   s^{-\frac{3}{2}}\Big(1 + \frac{\gamma x}{s}\Big)^{-\theta}, \quad
			|\varphi_0''(x)| \leq   s^{-\frac{7}{2}} \Big(1 + \frac{\gamma x}{s}\Big)^{-(\theta + 2)}
		\end{equation}
		for $s \geq t_0$, then
		\begin{equation}\label{eqn-lemma-sol-of-parabolic}
			\big| S(t, s)\varphi_0 - T(t,s)\varphi_0 \big|(x) 
			\leq 
			c_1t^{-\frac{3}{2}}e^{-\frac{x}{2}} + c_2t^{-\lambda} \Big(1 + \gamma \frac{x}{t} \Big)^{-\theta - 2}
			\quad \text{ for all } x \geq 0, t \geq s
		\end{equation}
		and
		\begin{equation}\label{eqn-lemma-parabolic-approx-integrated}
			\int_{0}^{\infty} x|S(t,s)\varphi_0 - T(t,s)\varphi_0| \d x
			\leq c(\gamma, \lambda) \big( t^{-\frac{3}{2}} + t^{2 - \lambda}\big).
		\end{equation}
		If instead of $(\ref{eqn-approxlemma-varphi-bound-f_s})$ it holds
		\begin{equation}\label{eqn-approxlemma-varphi-bound-exp}
			|\varphi_0(x)| \leq e^{-\frac{x}{2}},
		\end{equation}
		then
		\begin{equation}\label{eqn-lemma-sol-of-parabolic-exponential}
			| S(t,s)\varphi_0(x) |
			\leq 
			\Big(\frac{t}{s} \Big)^{-\frac{3}{2}}e^{-\frac{x}{2}}
			+ c_2t^{-2}\Big(\frac{t}{s} \Big)^{-\frac{3}{2}} \Big(1 + \gamma \frac{x}{t} \Big)^{-5\theta }
			\quad \text{ for all } x \geq 0, t \geq s
		\end{equation}
		and
		\begin{equation}\label{eqn-lemma-parabolic-approx-integrated-exponential}
			\int_{0}^{\infty} x|S(t,s)\varphi| \d x
			\leq c(\gamma)\Big(\frac{t}{s} \Big)^{-\frac{3}{2}}.
		\end{equation}
	\end{lemma}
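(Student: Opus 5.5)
We compare $\psi := S(t,s)\varphi_0$ with the explicit hyperbolic solution $T(t,s)\varphi_0$ through the maximum principle for the parabolic operator
\[
\mathcal{L}\psi := \partial_t\psi - \partial_x^2\psi - \Big(1+\beta x\big(-\tfrac{1}{n_s^2}+j\big)\Big)\partial_x\psi - \beta\big(-\tfrac{1}{n_s^2}+j\big)\psi
\]
on $x>0$, $t>s$. The sign of the zeroth order coefficient is favourable, since $-\tfrac{\beta}{n_s^2}+\beta j<0$ once $c_j\le \tfrac{1}{2N_s^2}$.

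Set $w := S(t,s)\varphi_0 - T(t,s)\varphi_0$. Then $w(\cdot,s)=0$ and $\mathcal{L}w = \partial_x^2 T(t,s)\varphi_0$. On the boundary, $w(0,t) = -T(t,s)\varphi_0(0) = -m'\varphi_0(m)$. By Lemma \ref{lemma-hyperbolic-m}(i),(ii) and (\ref{eqn-approxlemma-varphi-bound-f_s}) this is bounded by $c\, t^{-3/2}$ up to constants: we have $m'\sim (t/s)^{-(1-\gamma)}$ and $s^{-3/2}(1+\gamma m/s)^{-\theta}\lesssim s^{-3/2}(t/s)^{-\gamma\theta}$, and $\gamma\theta=\gamma+\tfrac12$ gives exactly the power $t^{-3/2}$ (with $c_m$ factors). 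The source term is
\[
\partial_x^2 T\varphi_0 = m'^3\varphi_0''(m'x+m),
\]
so (\ref{eqn-approxlemma-varphi-bound-f_s}) gives
\[
|\partial_x^2T\varphi_0|\lesssim (t/s)^{-3(1-\gamma)} s^{-7/2}\Big(1+\tfrac{\gamma(m'x+m)}{s}\Big)^{-\theta-2}.
\]
Using $m\approx \gamma^{-1}s((t/s)^\gamma-1)$ and $m'\approx (t/s)^{\gamma-1}$, we have $1+\gamma(m'x+m)/s \gtrsim (t/s)^\gamma(1+\gamma x/t)$. Collecting powers, the source is bounded by $c\,t^{-7/2}(t/s)^{\kappa}(1+\gamma x/t)^{-\theta-2}$, where $\kappa = 2 - 2\gamma - \gamma\theta$ up to arithmetic, and this bounds it by a constant times $t^{-\lambda-1}(1+\gamma x/t)^{-\theta-2}$ for $\lambda<\tfrac32+2\gamma$.

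We build a supersolution of the form
\[
W = c_1 t^{-3/2}e^{-x/2} + c_2 t^{-\lambda}\Big(1+\tfrac{\gamma x}{t}\Big)^{-\theta-2}.
\]
The first term handles the boundary datum. Computing $\mathcal{L}(t^{-3/2}e^{-x/2})$, the drift $1$ produces $+\tfrac12 e^{-x/2}$ and the diffusion $-\tfrac14 e^{-x/2}$, while the $x$-dependent drift contributes $O(x/n_s^2)$ with $n_s^2 = N_s^2 t$. This gives $\mathcal{L}\ge (\tfrac14 - \tfrac{3}{2t} - O(x/t))t^{-3/2}e^{-x/2}$, which is positive for $x\lesssim t$. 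For $x\gtrsim t$ the term is exponentially small and is absorbed by the second piece.

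The second term is the main obstacle. For $P = t^{-\lambda}(1+\gamma x/t)^{-\theta-2}$ and $z=\gamma x/t$, the transport part of $\mathcal{L}$ with the frozen coefficient $-\beta/n_s^2 = -(1-\gamma)/t$ gives
\[
t\,\mathcal{L}P/P \approx -\lambda + (\theta+2)\frac{z}{1+z} + (\theta+2)\frac{(1-\gamma)z/\gamma + \ldots}{1+z} + (1-\gamma) + \text{(drift-$1$ term)}.
\]
I would expand this carefully and check that it equals $(\tfrac32+2\gamma-\lambda)$ plus nonnegative terms, using $\theta-1=\tfrac1{2\gamma}$. The perturbations $\beta j$ (bounded by $c_j t^{-1}$, which is the reason for the constraint $c_j\le \tfrac{3/2+2\gamma-\lambda}{8\beta(\theta+2)}$) and the diffusion $\partial_x^2 P = O(t^{-2}P)$ (small for $t\ge \bar t_0$) must be absorbed. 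The result should be $\mathcal{L}P \ge \tfrac12(\tfrac32+2\gamma-\lambda)t^{-1}P$, which dominates the source when $c_2$ is chosen large. Since $W\ge |w|$ on the parabolic boundary, the comparison principle gives $|w|\le W$, which is (\ref{eqn-lemma-sol-of-parabolic}). Integrating against $x$ yields (\ref{eqn-lemma-parabolic-approx-integrated}), since
\[
\int x\,t^{-\lambda}(1+\gamma x/t)^{-\theta-2}\,\d x = c\,t^{2-\lambda}
\]
because $\theta>0$.

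For the exponential datum (\ref{eqn-approxlemma-varphi-bound-exp}) we compare $\pm S(t,s)\varphi_0$ directly, with zero boundary value, against
\[
(t/s)^{-3/2}e^{-x/2} + c_2 t^{-2}(t/s)^{-3/2}(1+\gamma x/t)^{-5\theta}.
\]
The first term is a supersolution in $x\lesssim t$ by the same computation as above. For large $x$, where the linear drift term $\beta x/n_s^2\cdot\tfrac12 e^{-x/2}$ spoils positivity, the error is exponentially small in $t$ and is absorbed by the algebraic second term. The exponent $5\theta$ is chosen so that the factor $\tfrac32 + (1-\gamma)5\theta\cdot\tfrac{z}{1+z}$ beats the time decay, which is where the condition $c_j\le \tfrac{\gamma}{5\theta}$ enters. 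Integrating then gives (\ref{eqn-lemma-parabolic-approx-integrated-exponential}).
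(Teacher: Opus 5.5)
Your proposal is the paper's proof. It uses the same comparison function $c_1t^{-3/2}e^{-x/2}+c_2t^{-\lambda}(1+\gamma x/t)^{-\theta-2}$, the same source $\partial_x^2T\varphi_0=m'^3\varphi_0''(m'x+m)$, the same boundary bound $m'\varphi_0(m)\lesssim t^{-3/2}$, the same split at $x\sim t$, and the same $(t/s)^{-3/2}$, $(1+\gamma x/t)^{-5\theta}$ barrier in the exponential case. However, two of the computations you deferred are wrong as written, and the first one would break the argument if taken at face value.

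\textbf{The exponent $\kappa$.} It is not $2-2\gamma-\gamma\theta$. It is exactly
\[
-3(1-\gamma)+\tfrac72-\gamma(\theta+2)=\tfrac12+\gamma-\gamma\theta=0 .
\]
So the source is bounded by $c\,t^{-7/2}(1+\gamma x/t)^{-\theta-2}$ uniformly in $s\in[t_0,t]$. This is at most a constant times $t^{-\lambda-1}(1+\gamma x/t)^{-\theta-2}$ because $\lambda<\tfrac32+2\gamma<\tfrac52$. With your value $\kappa=\tfrac32-3\gamma>0$, fixing $s$ and letting $t\to\infty$, a bound uniform in $s$ would hold only for $\lambda\le 1+3\gamma$, not on the whole range.

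\textbf{The expansion of $t\mathcal{L}P/P$.} The linear drift contributes $-(\theta+2)(1-\gamma)\frac{z}{1+z}$, not $+(\theta+2)\frac{(1-\gamma)z/\gamma}{1+z}$. Combined with the $\partial_t$ term $(\theta+2)\frac{z}{1+z}$ and the drift-$1$ term $(\theta+2)\frac{\gamma}{1+z}$, this gives exactly $(\theta+2)\gamma$. Hence
\[
t\,\mathcal{L}P/P=\tfrac32+2\gamma-\lambda-(\theta+2)(\theta+3)\tfrac{\gamma^2}{t}(1+z)^{-2}+\beta t j\Big((\theta+2)\tfrac{z}{1+z}-1\Big).
\]
The $z$-dependence cancels identically; it does not produce nonnegative extra terms.

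\textbf{Why $5\theta$.} The same cancellation is what justifies the exponent. For $c_2s^{3/2}t^{-7/2}(1+z)^{-5\theta}$ the net coefficient is $-\tfrac72+5\theta\gamma+1-\gamma=4\gamma>0$, independent of $z$. Your heuristic factor $\tfrac32+(1-\gamma)5\theta\frac{z}{1+z}$ would not beat the $t^{-7/2}$ decay at $z=0$.
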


	\begin{proof}
		By $(\ref{approx-lemma-max-prpl-cond})$ it holds for all $t \geq \bar{t}_0$
		$$\Big(\frac{\beta}{n_s^2(t)} - \beta j(t)\Big) \geq \frac{\beta}{2 N_s^2 t} \geq 0.$$
		Therefore the maximal principle applies to the operator
		\begin{equation*}
			L[\varphi] \coloneqq \partial_t \varphi - \partial_x^2 \varphi -  \Big(1 + \beta x \Big(-\frac{1}{n_s^2} + j(t) \Big)  \Big)\partial_x \varphi
			+ \beta \varphi \Big(\frac{1}{n_s^2} - j(t) \Big)
		\end{equation*}
		Hence in case (\ref{eqn-approxlemma-varphi-bound-f_s}), if we can show that
		\begin{enumerate}[(a)]
			\item $\big|L\big[S(t,s)\varphi_0 - T(t,s)\varphi_0 \big] \big|  \leq L\Big[ c_1t^{-\frac{3}{2}}e^{-\frac{x}{2}} + c_2t^{-\lambda}\Big(1 + \gamma \frac{x}{t} \Big)^{-\theta - 2} \Big],$
			\item $\big|S(t,s)\varphi_0(0) - T(t,s)\varphi_0(0) \big| = \big|T(t,s)\varphi_0(0) \big| \leq c_1t^{-\frac{3}{2}} + c_2 t^{-\lambda}$, and
			\item $\big|S(s,s)\varphi_0 - T(s,s)\varphi_0 \big| = 0 \leq c_1s^{-\frac{3}{2}}e^{-\frac{x}{2}} + c_2s^{- \lambda}\Big(1 + \gamma \frac{x}{s} \Big)^{-\theta - 2}$, which is trivially true,
		\end{enumerate}
		then (\ref{eqn-lemma-sol-of-parabolic}) follows by maximum principle and (\ref{eqn-lemma-parabolic-approx-integrated}) by integration, since then
		$$\int_{0}^{\infty} x|S(t,s)\varphi_0 - T(t,s)\varphi_0| \d x
		\leq 4c_1t^{-\frac{3}{2}} + \frac{c_2}{\gamma^2 \theta (\theta + 1)} t^{2 - \lambda}.$$
		
		In case (\ref{eqn-approxlemma-varphi-bound-exp}) we forgo $T(t,s)\varphi_0$ and directly approximate $S(t,s)\varphi_0$, such that it suffices to show
		\begin{enumerate}[(a')]
			\item $\big|L\big[S(t,s)\varphi_0 \big] \big| =0  
			\leq  L \Big[\Big(\frac{t}{s} \Big)^{-\frac{3}{2}}e^{-\frac{x}{2}} + c_2t^{-2}\Big(\frac{t}{s} \Big)^{-\frac{3}{2}} \Big(1 + \gamma \frac{x}{t} \Big)^{-5\theta} \Big],$
			\item $\big|S(t,s)\varphi_0(0) \big| = 0 \leq \Big(\frac{t}{s} \Big)^{-\frac{3}{2}} + c_2t^{-2}\Big(\frac{t}{s} \Big)^{-\frac{3}{2}}$, which always holds, and
			\item $\big|S(s,s)\varphi_0  \big| = |\varphi_0(x)| \leq e^{-\frac{x}{2}} + c_2s^{-2}\Big(1 + \gamma \frac{x}{s} \Big)^{-5\theta}$, which is trivially true. 
		\end{enumerate}
		We begin with showing $(a)$, starting with the exponential terms. We have
		\begin{align*}
			L[c_{1}t^{-\frac{3}{2}}e^{-\frac{x}{2}}] 
			&= c_1t^{-\frac{3}{2}}e^{-\frac{x}{2}} 
			\Big(
			- \frac{3}{2t} 
			- \frac{1}{4}
			+ \frac{1}{2}
			\Big(1 + \beta x \Big(-\frac{1}{n_s^2} + j \Big) \Big) 
			+  \beta \Big( \frac{1}{n_s^2} - j \Big)
			\Big) \\
			&\geq c_1t^{-\frac{3}{2}}e^{-\frac{x}{2}} 
			\Big(
			\frac{1}{8} 
			- \frac{3 \beta}{2 N_s^2 } \frac{x}{t} 
			\Big).
		\end{align*}
		whenever $\bar{t}_0 \geq 12$. Since $\frac{3 \beta}{2 N_s^2 } = \frac{3 }{2 }\big(1 - \gamma\big) \leq \frac{ 3}{2},$ for all $\frac{x}{t} \leq \frac{1}{12}$ it follows 
		$L[c_{1}t^{-\frac{3}{2}}e^{-\frac{x}{2}}] \geq 0,$
		whereas for all $\frac{x}{t} \geq \frac{1}{12}$ we have
		\begin{align*}
			\frac{x}{t}  e^{-\frac{x}{2}} 
			&= t^{-3} xt^2 e^{-\frac{x}{2}}
			\leq 12^2t^{-3} x^3 e^{-\frac{x}{2}}
			\leq c(\gamma)t^{-3} \Big(1 + \gamma \frac{x}{t} \Big)^{-\theta -2}
		\end{align*}
		and
		\begin{equation*}
			L[c_{1}t^{-\frac{3}{2}}e^{-\frac{x}{2}}] 
			\geq - c(\gamma) c_1 t^{-\frac{9}{2}}  \Big(1 + \gamma \frac{x}{t} \Big)^{-2\theta}
			\geq - c(\gamma) c_1 t^{-\frac{7}{2}}  \Big(1 + \gamma \frac{x}{t} \Big)^{-\theta - 2 }.
		\end{equation*}
		We now consider the contribution of the second part of the right hand side of $(a)$. We have
		\begin{align*}
			&\quad \; L
			\Big[
			c_{2}t^{-\lambda}\Big(1 + \gamma \frac{x}{t} \Big)^{-\theta - 2}
			\Big] \\
			&= c_{2}t^{-\lambda - 1}
			\Big(1 + \gamma \frac{x}{t} \Big)^{-\theta -2} 
			\Big( 
			- \lambda
			+ \frac{x}{t}\gamma(\theta + 2) \Big(1 + \gamma \frac{x}{t} \Big)^{-1}
			- (\theta + 2)(\theta + 3)\frac{\gamma^2}{t} \Big(1 + \gamma \frac{x}{t} \Big)^{-2} \\
			&\hspace*{4,7cm} + \Big(1 + \beta x\Big(-\frac{1}{n_s^2} + j \Big) \Big) \gamma (\theta + 2)\Big(1 + \gamma \frac{x}{t} \Big)^{-1}
			+ \beta  t \Big(\frac{1}{n_s^2} - j \Big)
			\Big) \\
			&= c_{2}t^{-\lambda - 1}
			\Big(1 + \gamma \frac{x}{t} \Big)^{-\theta -2} 
			\Big( 
			- \lambda
			+ \gamma(\theta + 2) \Big(1 + \gamma \frac{x}{t} \Big)^{-1} \Big(1 + \frac{x}{t}\Big(1 - \frac{\beta}{N_s^2}\Big)  \Big)
			\\
			&\hspace*{4,7cm} - (\theta + 2)(\theta + 3)\frac{\gamma^2}{t} \Big(1 + \gamma \frac{x}{t} \Big)^{-2} \\
			&\hspace*{4,7cm} + \beta t j(t) \Big(  \gamma \frac{x}{t} (\theta + 2)\Big(1 + \gamma \frac{x}{t} \Big)^{-1} - 1\Big)
			+ \frac{\beta}{N_s^2}
			\Big).  
		\end{align*}
		Using (\ref{eqn-def-gamma}), (\ref{approx-lemma-max-prpl-cond}) and the fact that $\Big|\gamma \frac{x}{t} (\theta + 2)\Big(1 + \gamma \frac{x}{t} \Big)^{-1} - 1  \Big| \leq \theta + 1$ we further estimate
		\begin{align*}
			&\quad \;L\Big[
			c_{2}t^{-\lambda}\Big(1 + \gamma \frac{x}{t} \Big)^{-\theta - 2}
			\Big] \\
			&\geq c_{2}t^{-\lambda - 1}
			\Big(1 + \gamma \frac{x}{t} \Big)^{-\theta -2} 
			\Big( 
			- \lambda
			+ \gamma(\theta + 2)
			- (\theta + 2)(\theta + 3)\frac{\gamma^2}{t} - (\theta + 1)c_j
			+ 1 - \gamma
			\Big)
			\\
			&\geq c_{2}t^{-\lambda - 1}
			\Big(1 + \gamma \frac{x}{t} \Big)^{-\theta -2} 
			\Big( 
			- \lambda
			+ 3 \gamma + \frac{1}{2}
			+ 1 - \gamma
			- (\theta + 2)(\theta + 3)\frac{\gamma^2}{\bar{t}_0} 
			- \beta (\theta + 1)c_j
			\Big) \\ 
			&\geq \frac{c_{2}}{2}\Big(2\gamma + \frac{3}{2} - \lambda \Big)
			t^{-\lambda - 1}
			\Big(1 + \gamma \frac{x}{t} \Big)^{-\theta -2} 
		\end{align*}
		whenever $\bar{t}_0$ is large enough such that $(\theta + 2)(\theta + 3)\gamma^2 \leq \frac{\bar{t}_0 (2\gamma + \frac{3}{2} - \lambda)}{4}$.
		In total we obtain
		\begin{align*}
			&\quad L\Big[
			c_{1}t^{-\frac{3}{2}}e^{-\frac{x}{2}}
			+c_{2}t^{-\lambda}\Big(1 + \gamma \frac{x}{t} \Big)^{-\theta - 2}
			\Big] \\
			&\geq - c(\gamma) c_1 t^{-\frac{7}{2}} \Big(1 + \gamma \frac{x}{t} \Big)^{-\theta -2}
			+ \frac{c_{2}}{2}\Big(2\gamma + \frac{3}{2} - \lambda \Big)t^{-\lambda - 1}\Big(1 + \gamma \frac{x}{t} \Big)^{-\theta -2}  \\
			&\geq \frac{c_{2}}{4}\Big(2\gamma + \frac{3}{2} - \lambda \Big)t^{-\lambda - 1}\Big(1 + \gamma \frac{x}{t} \Big)^{-\theta -2}
		\end{align*}
		whenever $\lambda < \frac{3}{2} + 2 \gamma$ and $c_2$ is large enough in terms of $c_1$ and $\gamma.$ This proves $(a)$. As for $(a'),$ by analogous computation it holds
		\begin{align*}
			&\quad \;L\Big[
			c_{2}t^{-2}\Big(\frac{t}{s} \Big)^{-\frac{3}{2}} \Big(1 + \gamma \frac{x}{t} \Big)^{-5\theta }
			\Big] \\
			&\geq c_{2}t^{-\frac{9}{2}}s^{\frac{3}{2}}
			\Big(1 + \gamma \frac{x}{t} \Big)^{-5\theta} 
			\Big( 
			- \frac{7}{2} 
			+ 5\gamma \theta
			+ 1 - \gamma
			- \frac{5\theta(5\theta + 1)\gamma^2}{t} - \beta (5 \theta - 1) c_j
			\Big)
			\\
			&\geq c_{2}t^{-\frac{9}{2}}s^{\frac{3}{2}}
			\Big(1 + \gamma \frac{x}{t} \Big)^{-5\theta} 
			\Big( 
			4\gamma
			- \frac{(1 + 2\gamma)(1 + 3\gamma)}{\bar{t}_0} - \beta (5 \theta - 1) c_j
			\Big) \\
			&\geq 2 \gamma c_2 t^{-\frac{9}{2}}s^{\frac{3}{2}}
			\Big(1 + \gamma \frac{x}{t} \Big)^{-5\theta} 
		\end{align*}
		whenever $\bar{t}_0 \geq 5\theta(5\theta + 1)\gamma$, so that then
		\begin{align*}
			L\Big[
			\Big(\frac{t}{s} \Big)^{-\frac{3}{2}}e^{-\frac{x}{2}}
			+c_{2}t^{-2}\Big(\frac{t}{s} \Big)^{-\frac{3}{2}}\Big(1 + \gamma \frac{x}{t}\Big)^{-5\theta}
			\Big] 
			&\geq 0
		\end{align*}
		for $c_2$ large enough, completing the proof of $(a')$ and therefore the proof of the theorem under the stronger assumption (\ref{eqn-approxlemma-varphi-bound-exp}).
		
		In the case (\ref{eqn-approxlemma-varphi-bound-f_s}), it remains to compute the left-hand side of $(a)$ and show $(b).$ To do so, we recall (\ref{eqn-def-Tj}), and use \Cref{lemma-hyperbolic-m}, the assumptions of which are satisfied due to $(\ref{approx-lemma-max-prpl-cond})$. 
		We have
		\begin{align*}
			&\quad \; \big| L \big[ S(t,s)\varphi_0 - T(t,s)\varphi_0 \big] \big| \\
			&= \big| L\big[ T(t,s)\varphi_0 \big]\big| 
			= |(m'(t))^3\varphi_0''(m'(t)x + m(t))| 
			\leq  s^{-\frac{7}{2}}(m'(t))^3\Big(1 + \frac{\gamma (m'(t)x + m(t))}{s}\Big)^{-(\theta + 2 )} \\
			&\leq  c(\gamma, \eta) s^{-\frac{7}{2}} \Big(\frac{t}{s}\Big)^{3(\gamma - 1)}\Big(1 + \gamma s^{-1}c_m \Big( x \Big(\frac{t}{s} \Big)^{\gamma - 1} + \gamma^{-1}s\Big( \Big(\frac{t}{s} \Big)^{\gamma } - 1 \Big) \Big) \Big)^{-(\theta + 2)} \\
			&=  c(\gamma, \eta) s^{-\frac{7}{2}} \Big(\frac{t}{s}\Big)^{3(\gamma - 1)}\Big(1 + c_m \Big(\Big(\frac{t}{s} \Big)^{\gamma }\Big(1 + \frac{\gamma x}{t}\Big) - 1 \Big) \Big)^{-(\theta + 2)} \\
			&\leq c(\gamma, \eta) t^{-\frac{7}{2}} \Big(1 + \frac{\gamma x}{t}\Big)^{-\theta - 2}
			\leq \big(2\gamma + \frac{3}{2} - \lambda \big)\frac{c_{2}}{4}t^{-\lambda - 1}\Big(1 + \gamma \frac{x}{t} \Big)^{-\theta -2} 
		\end{align*}
		for $c_2$ sufficiently large, where we have used that $\lambda < \frac{3}{2} + 2 \gamma < \frac{5}{2}$. This yields $(a)$. 
		As for $(b)$, we find that
		\begin{align*}
			|T(t,s)\varphi_0(0)| &= 
			\big|m'(t)\varphi_0(m(t)) \big|
			\leq  m'(t) s^{-\frac{3}{2}}\Big(1 + \frac{\gamma m(t)}{s}\Big)^{-\theta } 
			\leq  c(\gamma, \eta)s^{-\frac{3}{2} - \gamma + 1 + \gamma \theta} t^{\gamma - 1 - \gamma \theta} \\
			&=  c(\gamma, \eta) t^{-\frac{3}{2}}
			\leq c_1t^{-\frac{3}{2}}
		\end{align*}
		for $c_1$ sufficiently large.
		This completes the proof.
	\end{proof}

\subsection{Proof of \Cref{thm-main-parabolic}}
With this, we are in a position to prove \Cref{thm-main-parabolic}.

\begin{proof}
	We replace $g = f - f_s$ and $G = F - F_s$ by
	\begin{align*}
		f_{bl}(x,t) &= \mu e^{-x} + \frac{c_s}{t^{\frac{3}{2}}}e^{-x} = t^{-\frac{3}{2}}F_{bl}(y, \tau), \\
		n_{bl}(t) &= \mu - \frac{c_s}{t^{\frac{3}{2}}},\\
		g(x,t) &= f - f_s - f_{bl}
		= t^{-\frac{3}{2}}G(y, \tau).
	\end{align*}
	since then $g(0, t) = G(0, \tau) = 0$ for all $t \geq t_0$ resp. $\tau \geq \tau_0$. We take $\bar{t}_0$ large enough such that $n_{bl}(t) > 0$ for all $t \geq t_0.$
	Assumption (\ref{thm-main-parabolic-ass-G0}) yields $g(x, t_0) = g_0(x) + \hat{g}_0(x)$ with
	\begin{align}\label{eqn-parabolic-g_0-ass-inproof}
		|g_0(x)| \leq \frac{c_0}{t_0^{\frac{3}{2}}} \Big(1 + \frac{\gamma x}{t_0}\Big)^{-\theta - \varepsilon}, \quad
		|\partial_x^2 g_0(x)| \leq \frac{c_0}{t_0^{\frac{7}{2}}} \Big(1 + \frac{\gamma x}{t_0}\Big)^{-\theta - 2}, \quad
		|\hat{g}_0(x)| \leq (1 + \mu + c_s)t_0^{\frac{1}{2} - \varepsilon \gamma}e^{-\frac{x}{2}}.
	\end{align}
	This implies
	\begin{equation*}
		|n_g(t_0)| \leq \int_{0}^{\infty} x\big(|g_0(x)| + |\hat{g}_0(x)| \big)
		\leq \frac{c_0}{\gamma^2(\theta - 2)( \theta - 1)}  t_0^{\frac{1}{2}}
		+ (1 + \mu + c_s)t_0^{\frac{1}{2} - \varepsilon \gamma}
	\end{equation*}
	and therefore 
	\begin{equation}\label{eqn-parabolic-ng-assumption}
		|n_g(t)| \leq  c_N t_0^{\varepsilon \gamma }t^{\frac{1}{2} - \varepsilon \gamma}, \quad |N_G(\tau)| \leq c_Ne^{- \varepsilon \gamma (\tau- \tau_0)}
	\end{equation}
	on a maximal interval $[t_0, t^*)$ resp. $[\tau_0, \tau^*)$ for
	\begin{equation}\label{eqn-parabolic-cg-lowerbound}
		c_N \geq 2\Big(\frac{c_0}{\gamma^2(\theta - 2)( \theta - 1)} + (1 + \mu + c_s)t_0^{- \varepsilon \gamma} \Big).
	\end{equation}
	We will again show that for $c_0$ small and $t_0$ large enough and a suitable choice of $c_N$, this implies $|N_G(\tau_0)| \leq \frac{c_N}{2} e^{-\varepsilon \gamma (\tau - \tau_0)}$ for all $\tau \in [\tau_0, \tau^*)$, so that $(\ref{eqn-parabolic-ng-assumption})$ can be extended to $[\tau_0, \infty)$.
	
	\textbf{Step 1: } Smallness of $j$

	As in Step 1 of the proof of \Cref{thm-main-hyperbolic}
	we assume $c_0$ is small and $\bar{t}_0$ large enough so that we may additionally assume
	\begin{equation}\label{eqn-parabolic-cg-upperbound}
		c_N \leq \frac{N_s}{4}.
	\end{equation}
	Then it holds that
	$n_s^2 + 2n_g n_s \geq \frac{N_s^2 t}{2} = \frac{n_s^2}{2}$ and $n_s + n_g \geq 0$, so that
	\begin{align*}
		n_s^4\Big(1 + \frac{1 + n_g^2 + n_{bl}^2 + 2n_gn_s + 2n_{bl} (n_g+2n_s) }{n_s^2}\Big)
		\geq n_s^4  \frac{n_s^2 + 2n_gn_s + 2n_{bl} (n_g +n_s)}{n_s^2}
		\geq \frac{N_s^4 t^2}{2}
	\end{align*}
	and therefore, since $\varepsilon \gamma \in \Big(0 , \frac{1}{2}\Big]$ and using (\ref{eqn-parabolic-cg-upperbound}) it holds
	\begin{align*}\label{eqn-parabolic-h-small}
		|h(n_g, t)| 
		&=  \frac{1 + n_g^2 + n_{bl}^2 +  2n_{bl} (n_g+2n_s) 
			- 2n_g n_s^{-1}\big(1 + n_g^2 + n_{bl}^2 + 2n_gn_s + 2n_{bl} (n_g+2n_s)  \big)
		}
		{n_s^4 \Big(1 + \frac{1 + n_g^2 + n_{bl}^2 + 2n_gn_s + 2n_{bl} (n_g+2n_s) }{n_s^2}\Big)} \\
		&\leq \frac{c(\gamma)}{t^2}\Big(t^{\frac{1}{2}} + c_N t_0^\eta t^{\frac{1}{2} - \eta} + c_N^2 t_0^{2\eta} t^{1 - 2 \eta} \Big)
		\leq c(\gamma)(t_0^{- \eta} + c_N^2)t_0^{\eta} t^{-1 -  \eta} .
	\end{align*}
	We remark that the difference to Step 1 of \Cref{thm-main-hyperbolic} lies in the factor $t^{\frac{1}{2}}$ in the first inequality. In the hyperbolic formulation this can be replaced by $1$. Here it is due to the presence of $f_{bl}$ and is the reason why we are restricted now to $\varepsilon \gamma \leq \frac{1}{2}.$
	
	For $j$ it follows
	\begin{equation}\label{eqn-parabolic-j-small}
		| j(t)|
		\leq c(\gamma)
		(t_0^{-\varepsilon \gamma} + c_N + c_N^2)t_0^{\varepsilon \gamma} t^{-1 - \varepsilon \gamma}
		= c_j t_0^{\varepsilon \gamma} t^{-1 - \varepsilon \gamma}.
	\end{equation}
	
	We observe that \Cref{lemma-hyperbolic-m} holds also for $j$ defined by (\ref{eqn-def-j-para}) instead of by (\ref{eqn-def-j}), and the same is true for \Cref{lemma-hyperbolic-M}. We can therefore use them in the following step together with \Cref{approx-lemma} to estimate the terms in (\ref{eqn-ng-duhamel-parabolic}) in the following step.
	
	\textbf{Step 2: } Approximation lemma and hyperbolic estimates
	
	We set $\lambda = \frac{3}{2} + \varepsilon \gamma$ and observe we can take $\bar{\tau}_0$ resp. $\bar{t}_0$ large enough and $c_0$ and therefore $c_N$ small enough such that $c_j$ satisfies $(\ref{approx-lemma-max-prpl-cond})$. So we can apply \Cref{approx-lemma} to approximate terms of the form $S(t,r)\varphi_0$ in $(\ref{eqn-ng-duhamel-parabolic})$. Those terms with exponential decay can be estimated directly, while those with polynomial decay can be estimated by $T(t,r)\varphi_0$. For those terms, as in Step 2 of \Cref{thm-main-hyperbolic}, we apply \Cref{lemma-hyperbolic-m} with $\eta = \varepsilon \gamma \in (0, 1 - \gamma ) \subset (0, 1)$ to estimate $T(t,r)\varphi_0$. 
	
	We begin with $\partial_x (xf_s)$. Clearly this satisfies (\ref{eqn-approxlemma-varphi-bound-f_s}), so by \Cref{lemma-hyperbolic-m} and \Cref{approx-lemma} it holds
	\begin{align*}
		&\quad \int_{t_0}^{t} 
		\beta \Big(\frac{2n_g(r) }{n_s^3(r)}
		+ h(n_g(r), r)\Big) \int_{0}^{\infty} x S(t, r)  \partial_x(xf_s(x,r)) \d x
		\d r \\
		&= \int_{t_0}^{t} 
		\beta \Big(\frac{2n_g(r) }{n_s^3(r)}
		+ h(n_g(r), r)\Big) \Big( \int_{0}^{\infty} x T(t, r)  \partial_x(xf_s(x,r)) \d x + \tilde{\delta}_1(r) \Big)
		\d r
	\end{align*}
	where $|\tilde{\delta}_1(r)|  \leq c(\gamma, \varepsilon)  t^{\frac{1}{2} - \varepsilon \gamma}.$
	
	Both $g_0$ and $r^{\frac{3}{2}}\partial_x^2 f_s(x,r)$ also fulfill (\ref{eqn-approxlemma-varphi-bound-f_s}), so that 
	\begin{align*}
		&\quad \, \Big| \int_{0}^{\infty} x S(t, t_0)g_0 \d x 
		+ \int_{t_0}^{t} \int_{0}^{\infty} x S(t,r) \partial_x^2 f_s(x,r)  \d x \d r
		\Big|
		\\
		&\leq \Big| \int_{0}^{\infty} x T(t, t_0)g_0 \d x  \Big|
		+ c(\gamma, \varepsilon) t^{\frac{1}{2} - \varepsilon \gamma}
		+ \int_{t_0}^{t}  r^{-\frac{3}{2}}\Big( \Big| \int_{0}^{\infty} x T(t,r) r^{\frac{3}{2}}\partial_x^2 f_s(x,r) \d x \Big| + c(\gamma, \varepsilon) \Big)  \d r.
	\end{align*}
	Furthermore, using \Cref{lemma-hyperbolic-m}$(i)-(ii)$, very similarly to the proof of \Cref{lemma-hyperbolic-m}$(iii)$, one can show that for any $|\varphi_0(x)| \leq t_0^{-\frac{3}{2}}\Big( 1 + \frac{\gamma x }{t_0} \Big)^{-\theta - \varepsilon}$ it holds
	\begin{equation*}
		\Big|\int_{0}^{\infty} x T(t, t_0)\varphi_0 \d x \Big| \leq c(\gamma)t^{\frac{1}{2}} \Big(\frac{t}{t_0} \Big)^{- \varepsilon \gamma}.
	\end{equation*}
	By assumption $g_0$ and $r^{\frac{3}{2}}\partial_x^2 f_s(x,r)$ fulfill this condition (the latter even with $\varepsilon = 2$), so that
	\begin{align*}
		\Big| \int_{0}^{\infty} x T(t, t_0)g_0 \d x  \Big|
		\leq c_0 c(\gamma)t^{\frac{1}{2}} \Big(\frac{t}{t_0} \Big)^{-\varepsilon \gamma}
	\end{align*}
	and 
	\begin{equation*}
		\Big| \int_{t_0}^{t} \int_{0}^{\infty} x T(t,r) \partial_x^2 f_s(x,r) \d x \d r \Big|
		\leq c(\gamma, \varepsilon)\int_{t_0}^{t} t^{\frac{1}{2} - \varepsilon \gamma} r^{-\frac{3}{2} + \varepsilon \gamma} \d r
		\leq c(\gamma, \varepsilon).
	\end{equation*}
	In total we obtain
	\begin{align*}
		\Big| \int_{0}^{\infty} x S(t, t_0)g_0 \d x 
		+ \int_{t_0}^{t} \int_{0}^{\infty} x S(t,r) \partial_x^2 f_s(x,r)  \d x \d r
		\Big|
		&\leq c(\gamma, \varepsilon)\big(c_0 + t_0^{-\varepsilon \gamma} \big)t^{\frac{1}{2}}\Big(\frac{t}{t_0} \Big)^{- \varepsilon \gamma}.
	\end{align*}
	Furthermore, $\hat{g}_0$ satisfies (\ref{eqn-approxlemma-varphi-bound-exp}), so that by \Cref{approx-lemma}
	\begin{align*}
		\Big| \int_{0}^{\infty} x S(t, t_0)\hat{g}_0 \d x \Big| 
		&\leq c(\gamma)(1 + \mu + c_s)t_0^{\frac{1}{2} - \varepsilon \gamma}\Big(\frac{t}{t_0}\Big)^{-\frac{3}{2}}
		\leq c(\gamma)(1 + \mu + c_s)t_0^{- \varepsilon \gamma}t^{\frac{1}{2}} \Big(\frac{t}{t_0} \Big)^{- \varepsilon \gamma} \\
		&\leq c(\gamma)t_0^{- \varepsilon \gamma}t^{\frac{1}{2}} \Big(\frac{t}{t_0} \Big)^{- \varepsilon \gamma}.
	\end{align*}
	Next, we consider $\mathcal{R}_{bl}$, the contributions coming from the boundary layer approximation. From (\ref{eqn-def-Rbl}) and (\ref{eqn-parabolic-j-small}) we obtain that
	$|\mathcal{R}_{bl}(x,r)| \leq c(\gamma)r^{-1} e^{-\frac{x}{2}},$
	so that by \Cref{approx-lemma} and $\varepsilon \gamma \leq \frac{1}{2}$ it holds
	\begin{align*}
				\Big| \int_{t_0}^{t} \int_{0}^{\infty} x S(t, r)\mathcal{R}_{bl}(x,r) \d x \d r \Big|
				&\leq c(\gamma, \varepsilon) \int_{t_0}^{t} \Big(\frac{t}{r} \Big)^{-\frac{3}{2}}r^{-1} \d r  \leq c(\gamma, \varepsilon) \\
				&\leq c(\gamma, \varepsilon)\Big(\frac{t}{t_0}\Big)^{\frac{1}{2} - \varepsilon \gamma}.
	\end{align*}
	
	All in all, we have
	\begin{align*}
		\delta_2(r) &\coloneqq 
		\Big| \int_{0}^{\infty} x S(t, t_0)(g_0 + \hat{g}_0) \d x 
		+ \int_{t_0}^{t} \int_{0}^{\infty} x S(t,r)\big(\mathcal{R}_{bl}(x,r) + \partial_x^2 f_s(x,r) \big) \d x \d r
		\Big| \\
		&\leq c(\gamma, \varepsilon)\big(
		c_0 + t_0^{-\varepsilon \gamma}   \big)
		t^{\frac{1}{2}} \Big(\frac{t}{t_0} \Big)^{- \varepsilon \gamma}
	\end{align*}
	and rewrite (\ref{eqn-ng-duhamel-parabolic}) to
	\begin{align*}
		n_g(t) = \int_{t_0}^{t} 
		\beta \Big(\frac{2n_g(r) }{n_s^3(r)}
		+ h(n_g(r), r)\Big) \Big( \int_{0}^{\infty} x T(t, r)  \partial_x(xf_s(x,r)) \d x + \tilde{\delta}_1(r) \Big)
		\d r + \delta_2(t).
	\end{align*}
	We are now ready to move back to self-similar variables using the relations described in \Cref{prop-hyperbolic_NG}, which also imply that
	\begin{equation*}
		\int_{0}^{\infty} xT(t,t_0)\varphi \d x
		= t^{-\frac{3}{2}}\int_{0}^{\infty} x\tilde{T}(\tau,\tau_0)\Phi \d x
		= t^{\frac{1}{2}}\int_{0}^{\infty} y\tilde{T}(\tau,\tau_0)\Phi \d y.
	\end{equation*}
	Let $H$ and $\tilde{T}$ be as in \Cref{prop-hyperbolic_NG}, $\tilde{\Delta}_1(\tau) = t^{-\frac{1}{2}} \tilde{\delta}_1(t)$ and $\Delta_2(\tau) = t^{-\frac{1}{2}}\delta_2(t).$ Then
	\begin{align*}
		N_G(\tau) &= t^{-\frac{1}{2}} n_g(t) \\
		&=  \int_{\tau_0}^{\tau} 
		\beta \Big(\frac{2N_G(r) }{N_s^3}
		+ H(N_G(r), r)\Big) 
		\Big(
		\int_{0}^{\infty} y \tilde{T}(\tau, r)  \partial_y(yF_s(r)) \d x
		+ \tilde{\Delta}_1(r)
		\Big)
		\d r
		+ \Delta_2(\tau),
	\end{align*}
	which as in Step 2 of \Cref{thm-main-hyperbolic} can be further rewritten using \Cref{lemma-hyperbolic-M}(iii) to
	\begin{align*}
		N_G(\tau)
		&= - \frac{1 - \gamma}{\gamma} \int_{\tau_0}^{\tau} 
		\Big( 
		N_G(r)+ \frac{N_s^3}{2\beta}H(r)
		\Big) 
		\Big(
		1 
		- (1 - 2\gamma) \Big(\frac{t}{r} \Big)^{-\gamma}
		+ \Delta_1(r)
		\Big)
		\d r
		+ \Delta_2(\tau),
	\end{align*}
	where we now take $\Delta_1$ comprised of the parabolic-to-hyperbolic approximation error $\tilde{\Delta}_1$ and the error $\Delta_1$ from \Cref{lemma-hyperbolic-M}(iii), which is already present in the hyperbolic case. We then have
	\begin{align*}
		|\Delta_1(r)| &\leq c(\gamma, \varepsilon)\big( c_j + e^{-\varepsilon \gamma \tau} \big)
		\leq c(\gamma, \varepsilon)\big( c_N + c_N^2 + e^{-\varepsilon \gamma \tau_0} \big),
		\\
		|\Delta_2(\tau) | &\leq c(\gamma, \varepsilon)\big(c_0 + e^{-\varepsilon \gamma \tau_0} \big)
		e^{-\varepsilon \gamma (\tau - \tau_0)}, \\
		\big|\frac{N_s^3}{2\beta}H(r)\big| &\leq c(\gamma)(e^{-\varepsilon \gamma \tau_0} + c_N^2)e^{-\varepsilon \gamma (\tau - \tau_0)}.
	\end{align*}
	
	\textbf{Step 3: } Conclusion of continuation argument
	
	We follow Step 3 in the proof of \Cref{thm-main-hyperbolic}: When $\tau_0$ is sufficiently large and $c_0$ sufficiently small, we may take $c_N$ small enough that the bound (\ref{lemma-ODE-in-selfsim_c1}) on $\Delta_1$ holds without violating (\ref{eqn-parabolic-cg-lowerbound}), so that we can apply \Cref{lemma-ODE-in-selfsim}. Doing so yields
	\begin{align}\label{eqn-parabolic_NG-conclusion}
		| N_G(\tau) |
		&\leq c(\gamma, \varepsilon)\big(e^{- \varepsilon \gamma \tau_0} + c_N^2 +  c_0  \big)e^{-  \varepsilon \gamma (\tau - \tau_0)}.
	\end{align}
	We assume $c_0$ is small enough such that
	$c(\gamma, \varepsilon)c_N^2 \leq \frac{c_N}{4}$
	without violating (\ref{eqn-parabolic-cg-lowerbound}), and that $\tau_0$ is large enough such that
	\begin{equation}\label{eqn-parabolic-cg-lowerbound2}
		c(\gamma,\varepsilon) \big( c_0 + e^{-\varepsilon \gamma \tau_0} \big) \leq \frac{c_N}{4}
	\end{equation}
	without violating (\ref{eqn-parabolic-cg-upperbound}) (note this is, up to constants, the same upper lower bound as (\ref{eqn-parabolic-cg-lowerbound})).
	Then (\ref{eqn-parabolic_NG-conclusion}) reads
	$|N_G(\tau)| \leq \frac{c_N}{2} e^{-  \varepsilon \gamma (\tau - \tau_0)},$
	so that the assumption (\ref{eqn-parabolic-ng-assumption}) can be extended from $[t_0, t^*)$ to $[t_0, \infty)$ and from $[\tau_0, \tau^*)$ to $[\tau_0, \infty)$, respectively. The optimal choice of $c_N$ is determined by the lower bounds (\ref{eqn-parabolic-cg-lowerbound}) and (\ref{eqn-parabolic-cg-lowerbound2}).
	
	We thus have $|N_G(\tau)| = |N_F(\tau) - N_s - N_{bl}(\tau)| \leq c_Ne^{-  \varepsilon \gamma (\tau - \tau_0)}$ for all $\tau \geq \tau_0$. 
	
	\textbf{Step 4: } Limit of $F$

	We now turn our eye to the limit of $f - f_s - \mu e^{-x},$ which is slightly more involed than in the proof of Theorem \ref{thm-main-hyperbolic}. We again use \Cref{approx-lemma} with $\lambda = \frac{3}{2} +  \gamma$. This time, we want to apply it to a modified version of $f$ having boundary value $0$. In fact we apply it to
	$$f - \mu e^{-x} - \frac{\beta \mu}{2N_s^2} \frac{x^2}{t}e^{-x},$$ 
	since this yields a Duhamel formula for $f$ with better decay. In fact $\frac{\beta \mu}{2N_s^2} \frac{x^2}{t}e^{-x}$ is the next term that would appear in an asymptotic expansion for $f$ at the boundary. It holds
	\begin{align*}
		L\Big[
		\Big(\mu e^{-x} + \frac{\beta \mu}{2N_s^2} \frac{x^2}{t}e^{-x} \Big) \Big] 
		&= j(t) \partial_x \Big(x \Big(\mu e^{-x} - \frac{\beta \mu}{N_s^2} \frac{x^2}{2t}e^{-x} \Big)\Big) 
		-\frac{\beta \mu}{2N_s^2t^{2}}t^{-2}x^2e^{-x} + \frac{\beta^2 \mu}{2N_s^4 t^2} \partial_x(x^3e^{-x}) \\
		&\eqqcolon \mathcal{R}(x,t)
	\end{align*}
	for $|\mathcal{R}(x,t)| \leq c(\gamma)t_0^{\varepsilon \gamma}t^{-1-\varepsilon \gamma}e^{-\frac{x}{2}}$ using (\ref{eqn-parabolic-j-small}). It will be key that this decays more quickly than $L[\mu e^{-x}]$, which we recall satisfies $|L[\mu e^{-x}]| \leq c(\gamma)t^{-1}$.
	
	By Duhamel's formula it holds
	\begin{align*}\label{eqn-duhamel-f}
		&\quad \; f(x,t) -\mu e^{-x} - \frac{\beta \mu}{N_s^2} \frac{x^2}{2t}e^{-x} \nonumber \\
		&= S(t,t_0)\Big(f_0(x) - \Big(\mu e^{-x} + \frac{\beta \mu}{N_s^2} \frac{x^2}{2t_0}e^{-x}\Big)\Big) + \int_{t_0}^{t} S(t,r)\mathcal{R}(x,r) \d r \nonumber \\
		&= S(t,t_0)\Big(g_0 + \hat{g}_0 + f_s(t_0) + \frac{c_s}{t^{\frac{3}{2}}}e^{-x} - \frac{\beta \mu}{2N_s^2} \frac{x^2}{t}e^{-x}\Big)
		+ \int_{t_0}^{t} S(t,r)\mathcal{R}(x,r) \d r.
	\end{align*}
	By (\ref{eqn-parabolic-g_0-ass-inproof}), $g_0 + f_s(t_0)$ satisfy (\ref{eqn-approxlemma-varphi-bound-f_s}), while 
	$$\Big| \hat{g}_0 + \frac{c_s}{t^{\frac{3}{2}}}e^{-x} - \frac{\beta \mu}{2N_s^2} \frac{x^2}{t}e^{-x} \Big| 
	\leq c(\gamma)t_0^{\frac{1}{2} - \varepsilon \gamma}e^{-\frac{x}{2}}
	$$
	satisfies (\ref{eqn-approxlemma-varphi-bound-exp}), so that by \Cref{approx-lemma}
	\begin{align*}
		&\quad \Big| S(t,t_0)\Big(g_0 + \hat{g}_0 + f_s(t_0) + \frac{c_s}{t^{\frac{3}{2}}}e^{-x} - \frac{\beta \mu}{2N_s^2} \frac{x^2}{t}e^{-x}\Big) 
		- T(t,t_0) (g_0 + f_s(t_0))
		\Big| \\
		&\leq c(\gamma)
		\Big(t^{-\frac{3}{2}}\big(1 + t_0^{2 - \varepsilon \gamma}\big)e^{-\frac{x}{2}} 
		+ t^{-\frac{3}{2}}\big(t^{ - \gamma} + t^{-2}t_0^{2 - \varepsilon \gamma} \big)
		\Big(1 + \frac{\gamma x}{t}\Big)^{-\theta - 2}
		\Big) \\
		&\leq c(\gamma, t_0)t^{-\frac{3}{2}} \Big(e^{-\frac{x}{2}} + t^{ - \gamma}\Big(1 + \frac{\gamma x}{t}\Big)^{-\theta - 2} \Big).
	\end{align*}
	As for $\mathcal{R}(x,t)$, (\ref{eqn-approxlemma-varphi-bound-exp}) is satisfied, so by \Cref{approx-lemma} we obtain 
	\begin{align*}
		\Big| \int_{t_0}^{t} S(t,r)\mathcal{R}(x,r) \d r \Big|
		&\leq c(\gamma)t_0^{\varepsilon \gamma} \int_{t_0}^{t} \Big(
		\Big(\frac{t}{r}\Big)^{-\frac{3}{2}} e^{-\frac{x}{2}}
		+ t^{-2}\Big(\frac{t}{r}\Big)^{-\frac{3}{2}} \Big(1 + \frac{\gamma x}{t}\Big)^{-5\theta }
		\Big)
		r^{-1 - \varepsilon \gamma} \d r \\
		&\leq c(\gamma)\Big(\frac{t}{t_0}\Big)^{-\varepsilon \gamma}
		\Big(e^{-\frac{x}{2}} + t^{-2}\Big(1 + \frac{\gamma x}{t}\Big)^{-5\theta } \Big).
	\end{align*}
	Finally, it holds
	$$\Big| \frac{\beta \mu}{2N_s^2} \frac{x^2}{t}e^{-x} \Big|
	\leq c(\gamma)t^{-1}e^{-\frac{x}{2}},
	$$
	so that we obtain in total
	\begin{align*}
		\big|f(x,t) -\mu e^{-x} -  T(t,t_0)(g_0 + f_s(t_0)) \big|
		&\leq c(\gamma, t_0) \Big( t^{-\varepsilon \gamma} e^{-\frac{x}{2}}
		+ t^{-\frac{3}{2} - \gamma}\Big(1 + \frac{\gamma x}{t}\Big)^{-\theta - 2} \Big).
	\end{align*} 
	With this, we are ready to pass to self-similar variables, in which the above reads
	\begin{align*}
		\big|F - \mu e^{\frac{3}{2}\tau - ye^{\tau}} - \tilde{T}(\tau, \tau_0)(G_0 + F_s) \big|
		&\leq c(\gamma, \tau_0)
		\Big(
		e^{(\frac{3}{2} - \varepsilon \gamma )\tau - \frac{ye^{\tau}}{2}} + e^{-\gamma \tau} \Big(1 + \gamma y \Big)^{-\theta - 2}
		\Big).
	\end{align*}
	Next, we notice that
	\Cref{lemma-F-to-Fs} holds also for $J(\tau) = tj(t)$ where $j$ is defined by (\ref{eqn-def-j}), since we still have $|J(\tau)| \leq t|j(t)| \leq c_j \Big(\frac{t}{t_0}\Big)^{-\varepsilon \gamma} = e^{-\varepsilon \gamma (\tau - \tau_0)}.$ Furthermore, by assumption (\ref{eqn-parabolic-g_0-ass-inproof}) it holds 
	$$\lim\limits_{y \to \infty} y^{-\theta}(G_0 + F_s) = c_s\gamma^{-\theta},
	$$
	so by \Cref{lemma-F-to-Fs} it holds
	$$\tilde{T}(\tau, \tau_0)(G_0 + F_s) \to F_s(y).$$
	as $\tau \to \infty$ for all $y \geq 0$.
	Thus we have in total
	\begin{align*}
		&\quad \lim\limits_{\tau \to \infty} 
		\Big|
		\frac{F - F_s - \mu e^{\frac{3}{2}\tau - ye^{\tau}}}{ e^{\frac{3}{2}\tau - ye^{\tau}} + (1 + \gamma y)^{-\theta}}
		\Big| \\
		&\leq \lim\limits_{\tau \to \infty}
		\big|\tilde{T}(\tau, \tau_0)(G_0 + F_s) - F_s \big|
		+ c(\gamma, \tau_0)\lim\limits_{\tau \to \infty}
		\frac{
			e^{(\frac{3}{2} - \varepsilon \gamma )\tau - \frac{ye^{\tau}}{2}} + e^{-\gamma \tau} (1 + \gamma y )^{-\theta - 2}
			}
			{ e^{\frac{3}{2}\tau - ye^{\tau}} + (1 + \gamma y)^{-\theta}}
		= 0
	\end{align*}
	for all $y \geq 0$, and uniformly on compact sets in $[0, \infty)$ by Dini's theorem.   
\end{proof}

\bibliographystyle{alpha}
\bibliography{Electroporation}
\end{document}